\newcommand{\beq}{\begin{equation}}
\newcommand{\eeq}{\end{equation}}
\newcommand{\beqa}{\begin{eqnarray}}
\newcommand{\eeqa}{\end{eqnarray}}
\newcommand{\beqas}{\begin{eqnarray*}}
\newcommand{\eeqas}{\end{eqnarray*}}
\newcommand{\ba}{\begin{array}}
\newcommand{\ea}{\end{array}}
\newcommand{\bi}{\begin{itemize}}
\newcommand{\ei}{\end{itemize}}
\newcommand{\nn}{\nonumber}
\DeclareMathOperator*{\argmin}{argmin}  
\newcommand{\mcX}{{\mathcal X}}
\newcommand{\mcY}{{\mathcal Y}}
\newcommand{\prox}{\mathrm{prox}}
\newcommand{\dom}{\mathrm{dom}}
\newcommand{\dist}{\mathrm{dist}}
\newtheorem{lemma}{Lemma}
\newtheorem{thm}{Theorem}
\newtheorem{defi}{Definition}
\newtheorem{assumption}{Assumption}
\newtheorem{rem}{Remark}
\newcounter{spb}
\def\bfx{{\rm \bf x}}
\def\bfy{{\rm \bf y}}
\def\cA{{\cal A}}
\def\cBr{{\mathbb B^+_r}}
\def\cL{{\cal L}}
\def\calN{{\cal N}}
\def\cO{{\cal O}}
\def\cS{{\cal S}}
\def\tlambda{{\tilde \lambda}}
\def\cI{{\mathscr I}}
\def\tg{{\tilde g}}
\def\tL{{\widetilde L}}
\def\tP{{\widetilde P}}
\def\tx{{\hat x}}
\def\ty{{\hat y}}
\def\bR{{\mathbb{R}}}
\def\hz{{\hat z}}
\def\tf{{\tilde f}}
\def\xe{{x_\varepsilon}}
\def\ye{{y_\varepsilon}}
\def\ze{{z_\varepsilon}}
\def\bh{{h}}
\def\bH{{H}}
\def\bk{{\bar k}}
\def\h{\bar h}
\def\hL{{\widehat L}}
\def\Cr{{\delta}}
\definecolor{ngreen}{RGB}{38,217,169}
\title{First-order penalty methods for bilevel optimization
}
\author{
Zhaosong Lu
\thanks{
Department of Industrial and Systems Engineering, University of Minnesota, USA (email: {\tt zhaosong@umn.edu}, {\tt mei00035@umn.edu}). This work was partially supported by NSF Award IIS-2211491.}
\and
Sanyou Mei
\footnotemark[1]
}
\date{January 4, 2023 (Revised: April 18, 2023; September 27, 2023; December 6, 2023)}
\begin{document}
\maketitle

\begin{abstract}
In this paper we study a class of unconstrained and constrained bilevel optimization problems in which the lower level is a possibly nonsmooth convex optimization problem, while the upper level is a possibly nonconvex optimization problem. We introduce a notion of $\varepsilon$-KKT solution for them and show that  an $\varepsilon$-KKT solution leads to an $\cO(\sqrt{\varepsilon})$- or $\cO(\varepsilon)$-hypergradient based stionary point under suitable assumptions. We also propose first-order penalty methods for finding an $\varepsilon$-KKT solution of them, whose subproblems turn out to be a structured minimax problem and can be suitably solved by a first-order method recently developed by the authors. Under suitable assumptions, an \emph{operation complexity} of $\cO(\varepsilon^{-4}\log\varepsilon^{-1})$ and $\cO(\varepsilon^{-7}\log\varepsilon^{-1})$,  measured by their fundamental operations, is established for the proposed penalty methods for finding an $\varepsilon$-KKT solution of the unconstrained and constrained bilevel optimization problems, respectively. Preliminary numerical results are presented to illustrate the performance of our proposed methods. To the best of our knowledge, this paper is the first work to demonstrate that bilevel optimization can be approximately solved as minimax optimization, and moreover, it provides the first implementable method with complexity guarantees for such sophisticated bilevel optimization.
\end{abstract}

\noindent {\bf Keywords:} bilevel optimization, minimax optimization, penalty methods, first-order methods, operation complexity

\medskip

\noindent {\bf Mathematics Subject Classification:} 90C26, 90C30, 90C47, 90C99, 65K05 

\section{Introduction}

Bilevel optimization is a two-level hierarchical optimization in which the decision variables in the upper level are also involved in the lower level. Generically, it can be written in the following form:
 \begin{equation}\label{BLO}
\begin{array}{rl}
\min\limits_{x,y} & f(x,y)\\
\mbox{s.t.}& g(x,y)\leq 0, \ \ y\in\argmin\limits_z\{\tf(x,z)|\tg(x,z)\leq0\}.\footnote{}                  
\end{array}
\end{equation}
\footnotetext{For ease of reading, throughout this paper the tilde symbol is particularly used for the functions related to the lower-level optimization problem. Besides,  ``$\argmin$'' denotes the set of optimal solutions of the associated problem.}
Bilevel optimization has found a variety of important applications, including adversarial training \cite{Madry18,mirrlees1999theory,szegedy2013intriguing},   continual learning \cite{lopez2017gradient}, hyperparameter tuning \cite{bennett2008bilevel,franceschi2018bilevel}, image reconstruction \cite{crockett2022bilevel}, meta-learning \cite{bertinetto2018meta,ji2020convergence,rajeswaran2019meta}, neural architecture search \cite{feurer2019hyperparameter,liu2018darts}, reinforcement learning \cite{hong2023two,konda1999actor}, and Stackelberg games \cite{von2010market}. More applications about it can be found in \cite{bard2013practical,colson2007overview,dempe2002foundations,dempe2015bilevel,
dempe2020bilevel,shimizu2012nondifferentiable} and the references therein. 
Theoretical properties including optimality conditions of \eqref{BLO} have been extensively
studied in the literature (e.g., see \cite{dempe2020bilevel,dempe2013bilevel,ma2021combined,vicente1994bilevel,ye2020constraint}).  

Numerous methods have been developed for solving some special cases of \eqref{BLO}. For example,  constraint-based methods \cite{hansen1992new,shi2005extended}, deterministic gradient-based methods \cite{franceschi2017forward,franceschi2018bilevel,grazzi2020iteration,hu2023improved,maclaurin2015gradient,
pedregosa2016hyperparameter,rajeswaran2019meta}, and stochastic gradient-based methods \cite{chen2022single,ghadimi2018approximation,guo2021randomized,hong2023two,huang2021biadam,
huang2022efficiently,ji2021bilevel,khanduri2021near,kwon2023fully,li2022fully,yang2021provably} 
were proposed for solving  \eqref{BLO} with $g\equiv 0$, $\tg\equiv 0$,  $f$,  $\tf$ being smooth, and $\tf$ being \emph{strongly convex} with respect to $y$. For a similar case as this but with $\tf$ being \emph{convex} with respect to $y$, a zeroth-order method was recently proposed in \cite{chen2023bilevel}, and also numerical methods were developed in \cite{li2023novel,liu2022bome,sow2022constrained} by solving \eqref{BLO} as a single or sequential smooth constrained optimization problems. Besides, when all the functions in \eqref{BLO} are smooth and $\tf,\,\tg$ are \emph{convex} with respect to $y$, gradient-type methods were proposed by solving  a mathematical program with equilibrium constraints resulting from replacing the lower-level optimization problem of \eqref{BLO} by its first-order optimality conditions (e.g., see \cite{allende2013solving,luo1996mathematical,outrata2013nonsmooth}).  Recently, difference-of-convex (DC) algorithms were developed in \cite{ye2022difference} for solving \eqref{BLO} with $g\equiv 0$, $f$ being a DC function, and $\tf$, $\tg$ being convex functions. In addition, a double penalty method \cite{ishizuka1992double} was proposed for \eqref{BLO}, which solves a sequence of bilevel optimization problems of the form 
\beq\label{BLO-dp}
\begin{array}{rl}
\min\limits_{x,y} & f(x,y)+\rho_k\Psi(x,y)\\
\mbox{s.t.}&  y\in\argmin\limits_z \tf(x,z)+\rho_k\tilde\Psi(x,z),                
\end{array}
\eeq
 where $\{\rho_k\}$ is a sequence of penalty parameters, and $\Psi$ and $\tilde \Psi$ are a penalty function associated with the sets $\{(x,y)|g(x,y)\leq 0\}$ and $\{(x,z)|\tilde g(x,z)\leq 0\}$, respectively. Though problem \eqref{BLO-dp} appears to be simpler than \eqref{BLO}, there is no method available for finding an approximate solution of \eqref{BLO-dp} in general.  Consequently, the double penalty method \cite{ishizuka1992double} is typically not implementable. More discussion on algorithmic development for bilevel optimization can be found in   \cite{bard2013practical,colson2007overview,dempe2020bilevel,liu2021investigating,
sinha2017review,vicente1994bilevel}) and the references therein.

It has long been known that the notorious challenge of bilevel optimization \eqref{BLO} mainly comes from the lower level part, which requires that the variable $y$ be a solution of another optimization problem. Due to this, for the sake of simplicity, we only consider a subclass of bilevel optimization with the constraint $g(x,y)\leq 0$ being excluded, namely, 
  \begin{equation}\label{BLO-1}
\begin{array}{rl}
\min\limits_{x,y} & f(x,y)\\
\mbox{s.t.}&  y\in\argmin\limits_z\{\tf(x,z)|\tg(x,z)\leq0\}.                
\end{array}
\end{equation}  
Nevertheless, the results in this paper can be possibly extended to problem \eqref{BLO}.

The main goal of this paper is to develop an implementable first-order method with complexity guarantees for solving problem \eqref{BLO-1}. Our key insights for this development are: (i) problem \eqref{BLO-1} can be approximately solved as a structured minimax problem  that results from a novel penalty approach; (ii) the resulting structured minimax problem  can be suitably solved by a first-order method proposed in \cite[Algorithm 2]{lu2023first}. As a result, these lead to development of a novel first-order penalty method for solving \eqref{BLO-1}, which enjoys the following appealing features. 
\bi
\item It uses only the first-order information of the problem. Specifically, its fundamental operations consist only of gradient evaluation of $\tg$ and the smooth component of $f$ and $\tf$ and also proximal operator evaluation of the nonsmooth component of $f$ and $\tf$. Thus, it is  suitable for solving large-scale problems (see Sections \ref{unconstr-BLO} and \ref{constr-BLO}).
\item It has theoretical guarantees on operation complexity, which is measured by the aforementioned fundamental operations, for finding an $\varepsilon$-KKT solution of \eqref{BLO-1}. Specifically, when $\tg\equiv 0$, it enjoys an operation complexity of $\cO(\varepsilon^{-4}\log \varepsilon^{-1})$. Otherwise, it enjoys an operation complexity of $\cO(\varepsilon^{-7}\log \varepsilon^{-1})$ 
(see Theorems \ref{unc-complexity} and \ref{complexity}).
\item It is applicable to a broader class of problems than existing methods. For example, it can be applied to \eqref{BLO-1} with $f$, $\tf$ being nonsmooth and $\tf$, $\tg$ being nonconvex with respect to $x$, which is however not suitable for existing methods.
\ei
To the best of our knowledge, this paper is the first work to demonstrate that bilevel optimization can be approximately solved as minimax optimization, and moreover, it provides the first implementable method with complexity guarantees for the sophisticated bilevel optimization problem \eqref{BLO-1}.

The rest of this paper is organized as follows. In Subsection \ref{notation} we introduce some notation and terminology. 
In Sections~\ref{unconstr-BLO} and \ref{constr-BLO}, we propose first-order penalty methods for unconstrained and constrained bilevel optimization and study their complexity, respectively. Preliminary numerical results and the proofs of the main results are respectively presented in Sections \ref{sec:exp} and \ref{sec:proof}. Finally, we make some concluding remarks in Section \ref{sec:conclude}.

\subsection{Notation and terminology}  \label{notation}
The following notation will be used throughout this paper. Let $\bR^n$ denote the Euclidean space of dimension $n$ and $\bR^n_+$ denote the nonnegative orthant in $\bR^n$. The standard inner product and Euclidean norm are  respectively denoted by $\langle\cdot,\cdot\rangle$ and $\|\cdot\|$, unless stated otherwise.  For any $v\in\bR^n$, let $v_+$ denote the nonnegative part of $v$, that is, $(v_+)_i=\max\{v_i,0\}$ for all $i$. For any two vectors $u$ and $v$, $(u;v)$ denotes the vector resulting from stacking $v$ under $u$. Given a point $x$ and a closed set $S$ in $\bR^n$, let $\dist(x,S)=\min_{x'\in S} \|x'-x\|$ and $\cI_S$ denote the indicator function associated with $S$.

A function or mapping $\phi$ is said to be \emph{$L_{\phi}$-Lipschitz continuous} on a set $S$ if $\|\phi(x)-\phi(x')\| \leq L_{\phi} \|x-x'\|$ for all $x,x'\in S$. In addition, it is said to be \emph{$L_{\nabla\phi}$-smooth} on $S$ if $\|\nabla\phi(x)-\nabla\phi(x')\| \leq L_{\nabla\phi} \|x-x'\|$ for all $x,x'\in S$.\footnote{When $\phi$ is a mapping, the norm used in $\|\nabla\phi(x)-\nabla\phi(x')\|$ is the Frobenius norm.} For a closed convex function $p:\bR^n\to \bR\cup\{\infty\}$,\footnote{For convenience, $\infty$ stands for $+\infty$.} the \emph{proximal operator} associated with $p$ is denoted by  
$\prox_p$,  that is,
\[
\prox_p(x) = \argmin_{x'\in\bR^n} \left\{ \frac{1}{2}\|x' - x\|^2 + p(x') \right\} \quad \forall x \in \bR^n.
\]
Given that evaluation of $\prox_{\gamma p}(x)$ is often as cheap as $\prox_p(x)$, we count the evaluation of $\prox_{\gamma p}(x)$ as one evaluation of proximal operator of $p$ for any $\gamma>0$ and $x\in\bR^n$. 

For a lower semicontinuous function $\phi:\bR^n\to \bR\cup\{\infty\}$, its \emph{domain} is the set $\dom\, \phi := \{x| \phi(x)<\infty\}$. The \emph{upper subderivative} of $\phi$ at $x\in \dom\, \phi$ in a direction $d\in\bR^n$ is defined by
\[
\phi'(x;d) = \limsup\limits_{x' \stackrel{\phi}{\to} x,\, t \downarrow 0} \inf_{d' \to d} \frac{\phi(x'+td')-\phi(x')}{t},
\] 
where $t\downarrow 0$ means both $t > 0$ and $t\to 0$, and $x' \stackrel{\phi}{\to} x$ means both $x' \to x$ and $\phi(x')\to \phi(x)$. The \emph{subdifferential} of $\phi$ at $x\in \dom\, \phi$ is the set 
\[
\partial \phi(x) = \{s\in\bR^n\big| s^T d \leq \phi'(x; d) \ \ \forall d\in\bR^n\}.
\]
We use $\partial_{x_i} \phi(x)$ to denote the subdifferential with respect to $x_i$.  
In addition, for an upper semicontinuous function $\phi$, its subdifferential is defined as $\partial \phi=-\partial (-\phi)$. If $\phi$ is locally Lipschitz continuous, the above definition of subdifferential coincides with the Clarke subdifferential. Besides, if $\phi$ is convex, it coincides with the ordinary subdifferential for convex functions. Also, if $\phi$ is continuously differentiable at $x$ , we simply have $\partial \phi(x) = \{\nabla \phi(x)\}$, where $\nabla \phi(x)$ is the gradient of $\phi$ at $x$. In addition, it is not hard to verify that $\partial (\phi_1+\phi_2)(x)=\nabla \phi_1(x)+\partial \phi_2(x)$ if $\phi_1$ is continuously differentiable at $x$ and $\phi_2$ is lower or upper semicontinuous at $x$. See \cite{clarke1990optimization,ward1987nonsmooth} for more details.

%
%

Finally, we introduce two types of approximate solutions for a general minimax problem
\begin{equation}\label{eg}
\Psi^* =\min_{x}\max_{y}\Psi(x,y),
\end{equation}
where $\Psi(\cdot,y): \bR^n \to \bR \cup\{\infty\}$ is a lower semicontinuous function, $\Psi(x,\cdot): \bR^m \to \bR \cup\{-\infty\}$ is an upper semicontinuous function, and $\Psi^*$ is finite. 

\begin{defi} \label{def1}
 A point $(\xe,\ye)$ is called an $\epsilon$-optimal solution of the minimax problem \eqref{eg} if
\begin{equation*}
 \max_{y}\Psi(x_\epsilon,y)-\Psi(\xe,\ye)\leq\epsilon,\quad\Psi(\xe,\ye)-\Psi^*\leq\epsilon.
\end{equation*}
\end{defi}

\begin{defi}  \label{def2}
 A point $(x,y)$ is called a stationary point of the minimax problem \eqref{eg} if 
\[
0 \in \partial_x\Psi(x,y), \quad 0\in\partial_y\Psi(x,y).
\]
In addition, for any $\epsilon>0$, a point $(\xe,\ye)$ is called an $\epsilon$-stationary point of the minimax problem \eqref{eg} if
\begin{equation*}
\dist\left(0,\partial_x\Psi(\xe,\ye)\right)\leq\epsilon,\quad\dist\left(0,\partial_y\Psi(\xe,\ye)\right)\leq\epsilon.
\end{equation*}
\end{defi}

\section{Unconstrained bilevel optimization} \label{unconstr-BLO}
In this section, we consider an unconstrained bilevel optimization problem\footnote{For convenience, problem \eqref{unc-prob} is referred to as an unconstrained bilevel optimization problem since its lower level part does not have an explicit constraint. Strictly speaking, it can be a constrained bilevel optimization problem. For example, when part of $f$ and/or $\tf$ is the indicator function of a closed convex set, \eqref{unc-prob} is essentially a constrained bilevel optimization problem.}
\begin{equation}\label{unc-prob}
\begin{array}{rl}
f^*=\min & f(x,y)\\
\mbox{s.t.}& y\in\argmin\limits_{z}\tf(x,z).
\end{array}
\end{equation}
Assume that problem \eqref{unc-prob} has at least one optimal solution. In addition, $f$ and $\tf$ satisfy the following assumptions.
\begin{assumption}\label{a1}
\begin{enumerate}[label=(\roman*)]
\item $f(x,y)=f_1(x,y)+f_2(x)$ and $\tf(x,y)=\tf_1(x,y)+\tf_2(y)$ are continuous on $\mcX \times \mcY$, where $f_2: \bR^n \to \bR\cup \{\infty\}$  and $\tf_2:\bR^m \to \bR\cup \{\infty\}$ are proper closed convex functions, $\tf_1(x,\cdot)$ is convex for any given $x\in\mcX$, and $f_1$, $\tf_1$ are respectively $L_{\nabla f_1}$- and $L_{\nabla \tf_1}$-smooth on $\mcX \times \mcY$ with $\mcX:=\dom\,f_2$ and $\mcY:=\dom\,\tf_2$.
\item The proximal operator associated with $f_2$ and $\tf_2$ can be exactly evaluated. 
\item The sets $\mcX$ and $\mcY$ (namely, $\dom\,f_2$ and $\dom\,\tf_2$) are compact.
\end{enumerate}
\end{assumption}

For notational convenience, we define
\begin{align}
&D_\bfx\coloneqq \max\{\|u-v\|\big|u,v\in\mcX\},\quad D_\bfy\coloneqq\max\{\|u-v\|\big|u,v\in\mcY\},\label{DxDy}\\
&\tf_{\rm hi}:=\max\{\tf(x,y)|(x,y)\in\mcX\times\mcY\},\quad\tf_{\rm low}:=\min\{\tf(x,y)|(x,y)\in\mcX\times\mcY\}, \label{tfbnd} \\
&f_{\rm low}:=\min\{f(x,y)|(x,y)\in \mcX \times \mcY\}. \label{lower-bnd}
\end{align}
By Assumption~\ref{a1}, one can observe that $D_\bfx$, $D_\bfy$, $\tf_{\rm hi}$, $\tf_{\rm low}$ and $f_{\rm low}$ are finite.

The goal of this section is to propose first-order penalty methods for solving problem \eqref{unc-prob}. To this end, we first observe that problem \eqref{unc-prob} can be viewed as 
\beq \label{unc-prob-ref}
\min_{x,y}\{f(x,y)|\tf(x,y)\leq\min\limits_{z}\tf(x,z)\}.
\eeq
 Notice that $\tf(x,y)-\min_{z}\tf(x,z) \geq 0$ for all $x,y$. Consequently, a natural \emph{penalty problem} associated with \eqref{unc-prob-ref} 
 is 
\beq \label{exact-penalty}
\min_{x,y}f(x,y)+\rho(\tf(x,y)-\min_{z}\tf(x,z)),
\eeq
where $\rho>0$ is a penalty parameter. We further observe that  \eqref{exact-penalty}
is equivalent to the \emph{minimax problem}
\begin{equation}\label{unc-mmax}
\min_{x,y}\max_{z}P_\rho(x,y,z),\quad\mbox{where}\quad P_\rho(x,y,z)\coloneqq f(x,y)+\rho(\tf(x,y)-\tf(x,z)).
\end{equation}
In view of Assumption \ref{a1}(i), $P_\rho$ can be rewritten as
\beq\label{P-rho}
P_\rho(x,y,z) = \big(f_1(x,y)+\rho\tf_1(x,y)-\rho\tf_1(x,z)\big)+\big(f_2(x) +\rho\tf_2(y)-\rho\tf_2(z)\big).
\eeq
By this and Assumption \ref{a1}, one can observe that $P_\rho$ enjoys the following nice properties.
\bi
\item $P_\rho$ is the sum of smooth function $f_1(x,y)+\rho\tf_1(x,y)-\rho\tf_1(x,z)$ with Lipschitz continuous gradient and possibly nonsmooth function $f_2(x) +\rho\tf_2(y)-\rho\tf_2(z)$ with exactly computable proximal operator.
\item $P_\rho$ is nonconvex in $(x,y)$ but concave in $z$.
\ei
Thanks to this nice structure of $P_\rho$, an approximate stationary point of the minimax problem \eqref{unc-mmax} can be found by a first-order method proposed in \cite[Algorithm 2]{lu2023first} (see Algorithm \ref{mmax-alg2} in Appendix A).

Based on the above observations, we are now ready to propose penalty methods for the unconstrained bilevel optimization problem \eqref{unc-prob} by solving either a sequence of minimax problems or a single minimax problem in the form of \eqref{unc-mmax}. Specifically, we first propose an \emph{ideal} penalty method for \eqref{unc-prob} by solving a sequence of minimax problems (see Algorithm \ref{alg1}). Then we propose a \emph{practical} penalty method for \eqref{unc-prob} by finding an approximate stationary point of a single minimax problem (see Algorithm \ref{alg2}).

\begin{algorithm}[H]
\caption{An ideal penalty method for problem~\eqref{unc-prob}}
\label{alg1}
\begin{algorithmic}[1]
\REQUIRE positive sequences $\{\rho_k\}$ and $\{\epsilon_k\}$ with $\lim_{k\to\infty}(\rho_k,\epsilon_k)=(\infty,0)$.
\FOR{$k=0,1,2,\ldots$}
\STATE Find an $\epsilon_k$-optimal solution $(x^k,y^k,z^k)$ of problem \eqref{unc-mmax} with $\rho=\rho_k$.
\ENDFOR
\end{algorithmic}
\end{algorithm}

The following theorem states a convergence result of Algorithm \ref{alg1}, whose proof is deferred to Section \ref{sec:proof3}.

\begin{thm}[{\bf Convergence of Algorithm \ref{alg1}}]\label{c1}
Suppose that Assumption \ref{a1} holds and that $\{(x^k,y^k,z^k)\}$ is generated by Algorithm \ref{alg1}. Then any accumulation point of $\{(x^k,y^k)\}$ is an optimal solution of problem \eqref{unc-prob}.
\end{thm}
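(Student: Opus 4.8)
The plan is to show that if $(\bar x,\bar y)$ is an accumulation point of $\{(x^k,y^k)\}$, then $(\bar x,\bar y)$ is feasible for \eqref{unc-prob}, i.e. $\bar y\in\argmin_z\tf(\bar x,z)$, and that $f(\bar x,\bar y)\le f^*$, which together force optimality. Passing to a subsequence, assume $(x^k,y^k,z^k)\to(\bar x,\bar y,\bar z)$ along $k\in K$ (possible after a further subsequence since $\mcX\times\mcY$ is compact by Assumption~\ref{a1}(iii); note $z^k\in\mcY=\dom\,\tf_2$ because the inner maximization in \eqref{unc-mmax} over $z$ is effectively over $\mcY$). Let $(x^*,y^*)$ be an optimal solution of \eqref{unc-prob}, so $\tf(x^*,y^*)=\min_z\tf(x^*,z)$ and $f(x^*,y^*)=f^*$.

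The key step is to exploit the two inequalities defining an $\epsilon_k$-optimal solution of the minimax problem \eqref{unc-mmax} with $\rho=\rho_k$. Write $P^*_{\rho_k}:=\min_{x,y}\max_z P_{\rho_k}(x,y,z)$. First, from $P_{\rho_k}(x^k,y^k,z^k)-P^*_{\rho_k}\le\epsilon_k$ and the fact that $(x^*,y^*,y^*)$ is feasible for the inner max at $(x^*,y^*)$ (giving $P^*_{\rho_k}\le\max_z P_{\rho_k}(x^*,y^*,z)=f(x^*,y^*)+\rho_k(\tf(x^*,y^*)-\min_z\tf(x^*,z))=f^*$), we get
\[
P_{\rho_k}(x^k,y^k,z^k)=f(x^k,y^k)+\rho_k\big(\tf(x^k,y^k)-\tf(x^k,z^k)\big)\le f^*+\epsilon_k .
\]
Second, from $\max_z P_{\rho_k}(x^k,y^k,z)-P_{\rho_k}(x^k,y^k,z^k)\le\epsilon_k$ we obtain $\tf(x^k,z^k)\ge\max_z\tf(x^k,z)-\epsilon_k/\rho_k$, i.e. $z^k$ nearly minimizes $\tf(x^k,\cdot)$. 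Combining, and using $f(x^k,y^k)\ge f_{\rm low}$,
\[
\rho_k\big(\tf(x^k,y^k)-\min_z\tf(x^k,z)\big)\le f^*-f_{\rm low}+\epsilon_k+\epsilon_k ,
\]
hence $\tf(x^k,y^k)-\min_z\tf(x^k,z)\le (f^*-f_{\rm low}+2\epsilon_k)/\rho_k\to 0$. Now I would let $k\to\infty$ along $K$: continuity of $\tf$ on $\mcX\times\mcY$ (Assumption~\ref{a1}(i)) gives $\tf(x^k,y^k)\to\tf(\bar x,\bar y)$, and lower semicontinuity of the optimal-value function $x\mapsto\min_z\tf(x,z)$ — which follows from continuity of $\tf$ and compactness of $\mcY$ (indeed this value function is continuous) — yields $\tf(\bar x,\bar y)\le\min_z\tf(\bar x,z)$, and since the reverse inequality is trivial, $\bar y\in\argmin_z\tf(\bar x,z)$: feasibility. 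Finally, from $f(x^k,y^k)\le f^*+\epsilon_k-\rho_k(\tf(x^k,y^k)-\tf(x^k,z^k))\le f^*+\epsilon_k+\epsilon_k$ (using the second inequality to bound the penalty term below by $-\epsilon_k$... more precisely $\tf(x^k,y^k)-\tf(x^k,z^k)\ge \tf(x^k,y^k)-\max_z\tf(x^k,z)+\ $ wait — use directly $f(x^k,y^k)\le P_{\rho_k}(x^k,y^k,z^k)\le f^*+\epsilon_k$ since the penalty term $\rho_k(\tf(x^k,y^k)-\tf(x^k,z^k))$ need not be nonnegative, but $\tf(x^k,y^k)\ge\tf(x^k,z^k)-$ [gap], so instead bound $\rho_k(\tf(x^k,y^k)-\tf(x^k,z^k))\ge -\epsilon_k$ from the second $\epsilon_k$-optimality inequality applied with $z=y^k$), we get $f(x^k,y^k)\le f^*+2\epsilon_k$; letting $k\to\infty$ and using continuity of $f$ gives $f(\bar x,\bar y)\le f^*$. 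Together with feasibility, $(\bar x,\bar y)$ is optimal.

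The main obstacle, and the step requiring care, is the passage to the limit in the feasibility argument: I must justify that $\min_z\tf(\bar x,z)\ge\limsup_{k\in K}\min_z\tf(x^k,z)$ is not needed but rather lower semicontinuity of the value function, and handle the nonsmooth components $f_2,\tf_2$ (so that "$\argmin_z\tf(x,z)$" is over $z\in\mcY$ and the inner sup in \eqref{unc-mmax} is attained on $\mcY$). Continuity of $\tf$ on the compact set $\mcX\times\mcY$ makes the value function $x\mapsto\min_{z\in\mcY}\tf(x,z)$ continuous by a standard Berge-type argument, which resolves this. A second minor point is bounding the penalty term $\rho_k(\tf(x^k,y^k)-\tf(x^k,z^k))$ from below by $-\epsilon_k$, which follows by taking $z=y^k$ in the inner-max gap inequality; I would state this explicitly to keep the chain of inequalities clean.
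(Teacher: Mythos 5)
Your proof is correct and takes essentially the same route as the paper: the two estimates you derive, $f(x^k,y^k)\le f^*+2\epsilon_k$ and $\tf(x^k,y^k)-\min_z\tf(x^k,z)\le\rho_k^{-1}(f^*-f_{\rm low}+2\epsilon_k)$, are exactly the content of the paper's Lemma~\ref{t1} (obtained there slightly more cleanly by summing the two $\epsilon_k$-optimality inequalities and using $\tf(x^k,y^k)-\min_z\tf(x^k,z)\ge 0$), followed by the same passage to the limit along a convergent subsequence. The only blemishes are cosmetic: your second displayed estimate should read $\tf(x^k,z^k)\le\min_z\tf(x^k,z)+\epsilon_k/\rho_k$ (the inner maximization of $P_{\rho_k}$ over $z$ corresponds to \emph{minimizing} $\tf(x^k,\cdot)$), which is what you in fact use, and the semicontinuity of the value function $x\mapsto\min_z\tf(x,z)$ that the limit step requires is \emph{upper} rather than lower semicontinuity---in any case covered by the full continuity you correctly invoke via compactness of $\mcY$.
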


Notice that \eqref{unc-mmax} is a \emph{nonconvex}-concave minimax problem. It is typically hard to find an $\epsilon$-optimal solution of \eqref{unc-mmax} for an arbitrary $\epsilon>0$. Consequently, Algorithm \ref{alg1} is \emph{not implementable} in general. We next propose a \emph{practical} penalty method for  problem \eqref{unc-prob} by 
applying Algorithm~\ref{mmax-alg2} (see Appendix A) to find an approximate stationary point of a single minimax problem \eqref{unc-mmax} with a suitable choice of $\rho$.

\begin{algorithm}[H]
\caption{A practical penalty method for problem~\eqref{unc-prob}}
\label{alg2}
\begin{algorithmic}[1]
\REQUIRE  $\varepsilon\in(0,1/4]$, $\rho=\varepsilon^{-1}$, $(x^0,y^0)\in\mcX\times\mcY$  with $\tf(x^0,y^0)\leq\min_y\tf(x^0,y)+\varepsilon$.
\STATE
Call Algorithm~\ref{mmax-alg2} in Appendix A with $\epsilon\leftarrow\varepsilon$, $\epsilon_0\leftarrow\varepsilon^{3/2}$, $\hat x^0\leftarrow(x^0,y^0)$, $\hat y^0\leftarrow y^0$, and $L_{\nabla \bh}\leftarrow L_{\nabla f_1}+2\varepsilon^{-1} L_{\nabla \tf_1}$
to find an $\varepsilon$-stationary point $(\xe,\ye,\ze)$ of problem \eqref{unc-mmax} with $\rho=\varepsilon^{-1}$. 
\STATE \noindent\textbf{Output}: $(\xe,\ye)$.
\end{algorithmic}
\end{algorithm}

\begin{rem} \label{alg4-remark}
(i) The initial point $(x^0,y^0)$ of Algorithm \ref{alg2} can be found by an additional procedure. Indeed, one can first choose any $x^0\in\mcX$ and then apply accelerated proximal gradient method \cite{nesterov2013gradient} to the problem $\min_y\tf(x^0,y)$ for finding $y^0\in\mcY$ such that $\tf(x^0,y^0)\leq\min_y\tf(x^0,y)+\varepsilon$; 
(ii) As seen from Theorem~\ref{mmax-thm} (see Appendix A), an $\epsilon$-stationary point of \eqref{unc-mmax} can be successfully found in step 1 of Algorithm~\ref{alg2} by applying Algorithm~\ref{mmax-alg2} to \eqref{unc-mmax};
(iii) For the sake of simplicity, a single subproblem of the form \eqref{unc-mmax} with static penalty and tolerance parameters is solved in Algorithm \ref{alg2}. Nevertheless, Algorithm \ref{alg2} can be modified into a perhaps practically more efficient algorithm by solving a sequence of subproblems of the form \eqref{unc-mmax} with dynamic penalty and tolerance parameters instead.
\end{rem}

In order to characterize the approximate solution found by Algorithm~\ref{alg2}, we next introduce a notion of $\varepsilon$-KKT solution of problem \eqref{unc-prob}.


Recall that problem \eqref{unc-prob} can be viewed as problem \eqref{unc-prob-ref}, which is a constrained optimization problem. In the spirit of classical constrained optimization, one would naturally be interested in a KKT solution $(x,y)$ of \eqref{unc-prob-ref} or equivalently \eqref{unc-prob}, namely, $(x,y)$ satisfies $\tf(x,y) \leq \min_z\tf(x,z)$ and moreover $(x,y)$ is a stationary point of the problem
\beq \label{min-prob}
\min_{x',y'}f(x',y')+\rho\big(\tf(x',y')-\min_{z'}\tf(x',z')\big)
\eeq 
for some $\rho\geq 0$.\footnote{The relation $\tf(x,y) \leq \min_z\tf(x,z)$ implies that $\tf(x,y)=\min_z\tf(x,z)$ and hence the complementary slackness condition 
$\rho(\tf(x,y)-\min_z\tf(x,z))=0$ holds.} Yet, due to the sophisticated problem structure, characterizing a stationary point of  \eqref{min-prob} is generally difficult. On another hand, notice that problem \eqref{min-prob} is equivalent to the minimax problem
\[
\min_{x',y'}\max_{z'} f(x',y')+\rho(\tf(x',y')-\tf(x',z')),
\]
whose stationary point $(x,y,z)$, according to Definition \ref{def2} and Assumption \ref{a1},  satisfies 
\begin{align}
0\in\partial f(x,y)+\rho\partial\tf(x,y) - (\rho\nabla_x\tf(x,z);0), \quad 0\in\rho\partial_z\tf(x,z).
 \label{kkt-unc}
\end{align}
Based on this observation, we are instead interested in a (weak) KKT solution of problem \eqref{unc-prob} and its inexact counterpart that are defined below.

\begin{defi}
The pair $(x,y)$ is said to be a KKT solution of problem \eqref{unc-prob} if there exists $(z,\rho)\in\bR^m\times\bR_+$ such that \eqref{kkt-unc} and $\tf(x,y) \leq \min_{z'}\tf(x,z')$ hold. In addition, for any $\varepsilon>0$, $(x,y)$ is said to be an $\varepsilon$-KKT solution of problem \eqref{unc-prob} if there exists $(z,\rho)\in\bR^m\times\bR_+$ such that
\begin{align*}
&\dist\Big(0,\partial f(x,y)+\rho\partial\tf(x,y) - (\rho\nabla_x\tf(x,z); 0)\Big)\leq\varepsilon, \quad \dist\big(0,\rho\partial_z\tf(x,z)\big)\leq\varepsilon,\\
&\tf(x,y)-\min_{z'}\tf(x,z')\leq\varepsilon.
\end{align*}
\end{defi}

Recently, a hypergradient-based stationary point has been considered in the literature (e.g., \cite{ghadimi2018approximation,pedregosa2016hyperparameter}) for problem \eqref{unc-prob} under the assumption that $f$ and $\tf$ are twice continuously differentiable in $\bR^n\times\bR^m$ and $\tf(x,\cdot)$ is strongly convex for any $x\in\bR^n$. Under this assumption,  the hyper-objective function $\Phi$ of \eqref{unc-prob}, defined as 
\beq \label{Phi}
 \Phi(x):=f(x,y^*(x)), \ \mbox{where} \  y^*(x)=\argmin\limits_z\tf(x,z),
\eeq
is continuously differentiable. Moreover,  following from \cite[Equation (2.8)]{ghadimi2018approximation}, the hypergradient of \eqref{unc-prob}, i.e., the gradient of $\Phi$,  is given by
\begin{equation} \label{hypergrad}
\nabla \Phi(x)=\nabla_x f(x,y^*(x))-\nabla_{xy}^2\tf(x,y^*(x))[\nabla_{yy}^2\tf(x,y^*(x))]^{-1}\nabla_yf(x,y^*(x)) \quad \forall x\in\bR^n.
\end{equation}
In addition, it is not hard to observe that problem \eqref{unc-prob} is equivalent to 
\begin{equation}\label{sprob}
\min\limits_{x} \, \Phi(x).
\end{equation}
In view of this, hypergradient based stationary point and its approximate counterpart are introduced in the literature (e.g., \cite{ghadimi2018approximation,pedregosa2016hyperparameter}) for problem \eqref{unc-prob}, based on the classical stationary point and  its approximate counterpart of problem \eqref{sprob}. More specifically, $x\in \bR^n$ is called a {\it hypergradient-based stationary point} of problem \eqref{unc-prob} if $\nabla \Phi(x)=0$, and it is called an 
{\it $\varepsilon$-hypergradient-based stationary point} of \eqref{unc-prob} if $\|\nabla \Phi(x)\|\leq \varepsilon$ for any $\varepsilon>0$.

We now study the relationship between an $\varepsilon$-KKT solution and an approximate hypergradient based stationary point of problem \eqref{unc-prob}. Specifically, under some suitable assumptions, the following theorem shows that if $(x,y)$ is an $\varepsilon$-KKT solution of problem \eqref{unc-prob}, then $x$ is an $\cO(\sqrt{\varepsilon})$- or $\cO(\varepsilon)$-hypergradient-based stationary point of it.  The proof of this theorem is deferred to Subsection \ref{sec:proof3}.


\begin{thm} \label{hypergrad-thm}
Let $\varepsilon_0, \rho_0>0$ be given and $\Omega \subset \bR^n$ be a nonempty compact set.  Assume that $f$ and $\tf$ are continuously differentiable and twice continuously differentiable in $\bR^n\times\bR^m$ respectively,  $\tf(x',\cdot)$ is strongly convex with modulus $\sigma>0$ for all $x'$ 
in an open set  $\calN$ containing $\Omega$, $\nabla f(x',\cdot)$ is $L_1$-Lipschitz continuous for all $x'\in\Omega$, and that $\nabla^2 \tf(x',\cdot)$ is $L_2$-Lipschitz continuous for all $x' \in \Omega$. 
Suppose that $(x,y)\in \Omega \times \bR^m$ is an $\varepsilon$-KKT solution of problem \eqref{unc-prob} with its associated $\rho\geq \rho_0$ for some $0<\varepsilon \leq \varepsilon_0$. 
Let $y^*(x')$ be defined in \eqref{Phi} and
\begin{align}
&\bar C=\max\big\{\|\nabla_y f(x',y')\|:x'\in\Omega,\|y'-y^*(x')\|\leq\sqrt{2\sigma^{-1}\varepsilon}\big\}, \label{def-Mf} \\
&\theta=\min\big\{(\rho\sigma)^{-1}(\varepsilon+\bar C),\ \sqrt{2\sigma^{-1}\varepsilon}\big\}, \quad C=\max_{x'\in\Omega}\|\nabla_{xy}^2\tf(x',y^*(x'))[\nabla_{yy}^2\tf(x',y^*(x'))]^{-1}\|. \label{C} 
\end{align}
Then we have 
\begin{align} 
\|\nabla\Phi(x)\| & \leq(2C+1)\varepsilon+(C+1)\Big(L_1\theta+\frac{ L_2\rho\theta^2}{2}+\frac{L_2\varepsilon^2}{2\rho\sigma^2}\Big) \label{hypergrad-bnd1} \\
& \leq (2C+1)\varepsilon+(C+1)\Big(L_1\sqrt{2\sigma^{-1}}+\frac{L_2\sigma^{-3/2}(\varepsilon_0+\bar C)}{\sqrt{2}}+\frac{\varepsilon^{3/2}_0L_2}{2\rho_0\sigma^2}\Big)\sqrt{\varepsilon}.  \label{hypergrad-bnd2}
\end{align}
\end{thm}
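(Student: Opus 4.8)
The plan is to compare the given $\varepsilon$-KKT point $(x,y)$ with the ``ideal'' lower-level solution $y^*(x)$ and its associated exact hypergradient. First I would use the second KKT inequality, $\dist(0,\rho\partial_z\tf(x,z))\le\varepsilon$, i.e. $\|\nabla_z\tf(x,z)\|\le\varepsilon/\rho$ (here $\tf$ is differentiable). Since $\tf(x,\cdot)$ is $\sigma$-strongly convex on $\calN\supset\Omega$, the unique minimizer $y^*(x)$ satisfies $\nabla_z\tf(x,y^*(x))=0$, and strong monotonicity of $\nabla_z\tf(x,\cdot)$ gives $\sigma\|z-y^*(x)\|\le\|\nabla_z\tf(x,z)-\nabla_z\tf(x,y^*(x))\|\le\varepsilon/\rho$, hence $\|z-y^*(x)\|\le\varepsilon/(\rho\sigma)$. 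Next I would exploit the feasibility inequality $\tf(x,y)-\min_{z'}\tf(x,z')\le\varepsilon$: by $\sigma$-strong convexity, $\tf(x,y)-\tf(x,y^*(x))\ge\tfrac{\sigma}{2}\|y-y^*(x)\|^2$, so $\|y-y^*(x)\|\le\sqrt{2\sigma^{-1}\varepsilon}$. Combining these two bounds gives $\|y-z\|\le\|y-y^*(x)\|+\|z-y^*(x)\|$, but it is cleaner to bound $\|y-y^*(x)\|$ and $\|z-y^*(x)\|$ separately and to introduce $\theta$ as in \eqref{C}; note $\|z-y^*(x)\|\le\theta$ follows since $\|z-y^*(x)\|\le\varepsilon/(\rho\sigma)$ and also $\|z-y^*(x)\|\le\|z-y\|+\|y-y^*(x)\|$ needs the first KKT relation — I should double-check which of the two terms in the min defining $\theta$ is actually needed, but both candidate bounds on the relevant quantity are available from the three $\varepsilon$-KKT inequalities.

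The core step is then to take the first KKT inclusion, $\dist\big(0,\nabla f(x,y)+\rho\nabla\tf(x,y)-(\rho\nabla_x\tf(x,z);0)\big)\le\varepsilon$, and read off its $y$-block and $x$-block. The $y$-block reads $\|\nabla_yf(x,y)+\rho\nabla_y\tf(x,y)\|\le\varepsilon$, which, after subtracting $\rho\nabla_y\tf(x,y^*(x))=0$ and using $L_2$-Lipschitzness of $\nabla^2\tf$ (hence $L_2$-type control on $\nabla_y\tf(x,\cdot)$ via its gradient) together with $L_1$-Lipschitzness of $\nabla f(x,\cdot)$, lets me bound $\|\nabla_y\tf(x,y)-\nabla_y\tf(x,z)\|$ and therefore $\|\nabla_yf(x,z)\|$ in terms of $\varepsilon$, $L_1\theta$, and $\tfrac{L_2\rho\theta^2}{2}$ (the quadratic term comes from a Taylor expansion of $\nabla_y\tf(x,\cdot)$ around $y^*(x)$, whose Hessian is Lipschitz). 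Then the $x$-block gives $\nabla_xf(x,y)+\rho\nabla_x\tf(x,y)-\rho\nabla_x\tf(x,z)=e$ with $\|e\|\le\varepsilon$; I would rewrite $\rho(\nabla_x\tf(x,y)-\nabla_x\tf(x,z))$ using the mixed second derivative $\nabla^2_{xy}\tf$ via a Taylor/mean-value argument, so that $\rho(\nabla_x\tf(x,y)-\nabla_x\tf(x,z))\approx\nabla^2_{xy}\tf(x,y^*(x))\cdot\rho(y-z)$, and then substitute $\rho(y-z)\approx -[\nabla^2_{yy}\tf(x,y^*(x))]^{-1}\nabla_yf(x,y^*(x))$ coming from the $y$-block analysis (since $\rho\nabla_y\tf(x,y)\approx-\nabla_yf(x,y^*(x))$ and $\nabla_y\tf(x,y)\approx\nabla^2_{yy}\tf(x,y^*(x))(y-y^*(x))$, etc.). Matching this against the closed form \eqref{hypergrad} for $\nabla\Phi(x)$, with $C$ controlling $\|\nabla^2_{xy}\tf[\nabla^2_{yy}\tf]^{-1}\|$, produces \eqref{hypergrad-bnd1}; the error terms $L_1\theta$, $\tfrac{L_2\rho\theta^2}{2}$, $\tfrac{L_2\varepsilon^2}{2\rho\sigma^2}$ are exactly the second-order Taylor remainders and the $O(\varepsilon)$ slacks, each multiplied by $1$, $C$, or $C+1$ as they propagate through the two blocks.

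Finally, to get \eqref{hypergrad-bnd2} from \eqref{hypergrad-bnd1} I would simply plug in $\theta\le\sqrt{2\sigma^{-1}\varepsilon}$ into $L_1\theta$, giving $L_1\sqrt{2\sigma^{-1}}\sqrt\varepsilon$; bound $\rho\theta^2\le\rho\cdot(\rho\sigma)^{-2}(\varepsilon+\bar C)^2\cdot$ — actually more directly use $\theta\le(\rho\sigma)^{-1}(\varepsilon+\bar C)$ for one factor of $\theta$ and $\theta\le\sqrt{2\sigma^{-1}\varepsilon}$ for the other, so $\tfrac{L_2\rho\theta^2}{2}\le\tfrac{L_2}{2}\sigma^{-1}(\varepsilon+\bar C)\sqrt{2\sigma^{-1}\varepsilon}\le\tfrac{L_2\sigma^{-3/2}(\varepsilon_0+\bar C)}{\sqrt2}\sqrt\varepsilon$; and bound $\tfrac{L_2\varepsilon^2}{2\rho\sigma^2}\le\tfrac{\varepsilon_0^{3/2}L_2}{2\rho_0\sigma^2}\sqrt\varepsilon$ using $\varepsilon\le\varepsilon_0$ and $\rho\ge\rho_0$. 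Collecting the $\sqrt\varepsilon$ coefficients and absorbing the $O(\varepsilon)$ term into it (or keeping it separate as in the statement) gives the claimed bound. The main obstacle I anticipate is the bookkeeping in the $x$-block: correctly handling the mean-value form of $\nabla_x\tf(x,y)-\nabla_x\tf(x,z)$ with a Lipschitz-Hessian remainder, and making sure the substitution of $\rho(y-z)$ by the Newton-type direction $-[\nabla^2_{yy}\tf]^{-1}\nabla_yf$ carries an error that is genuinely $O(\varepsilon)+O(\theta)+O(\rho\theta^2)$ and not something worse — in particular one must be careful that $\|\nabla_yf(x,y^*(x))\|$ is controlled by $\bar C$ (which is why $\bar C$ is defined via a ball of radius $\sqrt{2\sigma^{-1}\varepsilon}$ around $y^*(x')$, covering $y$), and that the inverse Hessian $[\nabla^2_{yy}\tf]^{-1}$ has operator norm $\le\sigma^{-1}$ throughout $\Omega$.
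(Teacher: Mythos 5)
Your proposal follows essentially the same route as the paper's proof: extract the three gradient inequalities from the $\varepsilon$-KKT conditions, use strong convexity to get $\|y-y^*(x)\|\le\sqrt{2\sigma^{-1}\varepsilon}$ and $\|z-y^*(x)\|\le\varepsilon/(\rho\sigma)$, Taylor-expand the $x$- and $y$-blocks around $y^*(x)$ with $L_1$/$L_2$ remainders, combine with \eqref{hypergrad} through the constant $C$, and finally trade the two bounds defining $\theta$ (one factor each) plus $\varepsilon\le\varepsilon_0$, $\rho\ge\rho_0$ to pass from \eqref{hypergrad-bnd1} to \eqref{hypergrad-bnd2}. The only wobble is your hesitation about what $\theta$ bounds: it bounds $\|y-y^*(x)\|$ (the $(\rho\sigma)^{-1}(\varepsilon+\bar C)$ branch coming from the $y$-block inequality together with $\|\nabla_y f(x,y)\|\le\bar C$), while $\|z-y^*(x)\|\le\varepsilon/(\rho\sigma)$ is used separately and produces the $L_2\varepsilon^2/(2\rho\sigma^2)$ term — which is exactly how your later term-by-term accounting treats it, so the argument goes through as in the paper.
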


\begin{rem}
Based on the assumptions in Theorem \ref{hypergrad-thm}, it is not hard to observe that $\bar C$, $C$ and $\theta$ are finite. It then follows from \eqref{hypergrad-bnd2} that $\nabla \Phi(x)=\cO(\sqrt{\varepsilon})$ holds in general.  Nevertheless, this result is improved to $\nabla \Phi(x)=\cO(\varepsilon)$ when $\rho$ is at least order of $\varepsilon^{-1}$, i.e., $\rho \geq c \varepsilon^{-1}$ for some constant $c>0$ independent on $\varepsilon$, which can be observed from \eqref{hypergrad-bnd1}. Consequently, under the assumptions stated in Theorem \ref{hypergrad-thm}, if $(x,y)$ is an $\varepsilon$-KKT solution of problem \eqref{unc-prob}, then $x$ is an $\cO(\sqrt{\varepsilon})$- or $\cO(\varepsilon)$-hypergradient-based stationary point of it.  
\end{rem}

We next present a theorem regarding \emph{operation complexity} of Algorithm~\ref{alg2}, measured by the amount of evaluations of $\nabla f_1$, $\nabla\tf_1$ and proximal operator of $f_2$ and $\tf_2$, for finding an $\cO(\varepsilon)$-KKT solution of  \eqref{unc-prob}, whose proof is deferred to Subsection \ref{sec:proof3}.

\begin{thm}[{\bf Complexity of Algorithm \ref{alg2}}]\label{unc-complexity}
Suppose that Assumption~\ref{a1} holds. Let $f^*$, $f$, $\tf$, $D_\bfx$, $D_\bfy$, $\tf_{\rm hi}$, $\tf_{\rm low}$ and $f_{\rm low}$ be defined in \eqref{unc-prob}, \eqref{DxDy}, \eqref{tfbnd} and \eqref{lower-bnd}, $L_{\nabla f_1}$ and $L_{\nabla \tf_1}$ be  given in Assumption \ref{a1}, $\varepsilon$, $\rho$, $x^0$, $y^0$ and $\ze$ be given in Algorithm~\ref{alg2}, and 
\begin{align}
&\hL=L_{\nabla f_1}+2\varepsilon^{-1}L_{\nabla \tf_1},\ \hat\alpha=\min\left\{1,\sqrt{4\varepsilon/(D_\bfy\hL)}\right\},\label{hL}\\
&\hat \Cr=(2+\hat\alpha^{-1})(D_\bfx^2+D_\bfy^2)\hL+\max\left\{\varepsilon/D_\bfy,\hat\alpha\hL/4\right\}D_\bfy^2,\notag\\
&\widehat C=\frac{4\max\left\{\frac{1}{2\hL},\min\left\{\frac{D_\bfy}{\varepsilon},\frac{4}{\hat \alpha \hL}\right\}\right\}\left[\hat\Cr +2\hat \alpha^{-1}(f^*-f_{\rm low}+\varepsilon^{-1}(\tf_{\rm hi}-\tf_{\rm low})+\varepsilon D_\bfy/4+\hL (D_\bfx^2+D_\bfy^2))\right]}{\left[(3\hL+\varepsilon/(2D_\bfy))^2/\min\{\hL,\varepsilon/(2D_\bfy)\}+ 3\hL+\varepsilon/(2D_\bfy)\right]^{-2}\varepsilon^3},\notag\\
&\widehat K=\left\lceil16(1+f(x^0,y^0)-f_{\rm low}+\varepsilon D_\bfy/4)\hL\varepsilon^{-2}+32(1+4D_\bfy^2\hL^2\varepsilon^{-2})\varepsilon-1\right\rceil_+,\notag\\
&\widehat N=\left(\left\lceil96\sqrt{2}(1+(24\hL+4\varepsilon/D_\bfy)\hL^{-1})\right\rceil+2\right)\max\left\{2,\sqrt{D_\bfy\hL\varepsilon^{-1}}\right\}\notag\\
&\ \ \ \ \ \times((\widehat K+1)(\log \widehat C)_++\widehat K+1+2\widehat K\log(\widehat K+1)).\notag
\end{align}
Then Algorithm~\ref{alg2} outputs an approximate solution $(\xe,\ye)$ of \eqref{unc-prob} satisfying
\begin{align}
&\dist\left(0,\partial f(\xe,\ye)+\rho\partial\tf(\xe,\ye)-(\rho\nabla_x\tf(\xe,\ze);0)\right)\leq\varepsilon,\quad\dist\left(0,\rho\partial_z\tf(\xe,\ze)\right)\leq\varepsilon,\label{unc-gap1}\\
&\tf(\xe,\ye)\leq\min_{z}\tf(\xe,z)+\varepsilon\left(1+f(x^0,y^0)-f_{\rm low}+2\varepsilon^3(\hL^{-1}+4D_\bfy^2\hL\varepsilon^{-2})+D_\bfy\varepsilon/4\right),\label{unc-gap2}
\end{align}
after at most $\widehat N$ evaluations of $\nabla f_1$, $\nabla\tf_1$ and proximal operator of $f_2$ and $\tf_2$, respectively.
\end{thm}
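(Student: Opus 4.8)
The plan is to reduce the entire statement to the guarantees of Algorithm~\ref{mmax-alg2} and Theorem~\ref{mmax-thm} in Appendix~A, since Algorithm~\ref{alg2} consists of a single call to that routine on the minimax problem \eqref{unc-mmax} with $\rho=\varepsilon^{-1}$. First I would verify that \eqref{unc-mmax} with $\rho=\varepsilon^{-1}$ genuinely fits the structured-minimax template of \cite[Algorithm 2]{lu2023first}. By the decomposition \eqref{P-rho}, $P_\rho$ is the sum of the smooth block $\bh(x,y,z):=f_1(x,y)+\rho\tf_1(x,y)-\rho\tf_1(x,z)$ and the block $f_2(x)+\rho\tf_2(y)-\rho\tf_2(z)$, which is convex in the primal variables $(x,y)$, concave in the dual variable $z$, and has an available proximal operator by Assumption~\ref{a1}(ii); moreover $P_\rho$ is concave in $z$, and $\mcX\times\mcY$ and $\mcY$ are compact with diameters $D_\bfx,D_\bfy$ by Assumption~\ref{a1}(iii). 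A short triangle-inequality estimate on $\nabla\bh$, splitting the $f_1$ contribution from the two $\rho\tf_1$ contributions (the latter grouped as $\nabla_{(x,y)}\tf_1$ and $\nabla_{(x,z)}\tf_1$), shows $\nabla\bh$ is $\hL$-Lipschitz with $\hL=L_{\nabla f_1}+2\varepsilon^{-1}L_{\nabla\tf_1}$, which is exactly the value supplied to the routine in step~1 of Algorithm~\ref{alg2}.

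Next I would apply Theorem~\ref{mmax-thm} under the parameter substitution prescribed by Algorithm~\ref{alg2} ($\epsilon\leftarrow\varepsilon$, $\epsilon_0\leftarrow\varepsilon^{3/2}$, $\hat x^0\leftarrow(x^0,y^0)$, $\hat y^0\leftarrow y^0$, $L_{\nabla\bh}\leftarrow\hL$). This delivers three outputs at once: (a) the routine returns $(\xe,\ye,\ze)$ which is an $\varepsilon$-stationary point of \eqref{unc-mmax} in the sense of Definition~\ref{def2}, that is, $\dist(0,\partial_{(x,y)}P_\rho(\xe,\ye,\ze))\le\varepsilon$ and $\dist(0,\partial_zP_\rho(\xe,\ye,\ze))\le\varepsilon$; (b) an upper bound on the number of evaluations of $\nabla\bh$ (hence of $\nabla f_1$ and $\nabla\tf_1$) and of the proximal operator of the nonsmooth block (hence of $f_2$ and $\tf_2$); and (c) a controlled-ascent estimate for the primal function $\bar P_\rho(x,y):=\max_zP_\rho(x,y,z)$ of the form $\bar P_\rho(\xe,\ye)\le\bar P_\rho(x^0,y^0)+2\varepsilon^3(\hL^{-1}+4D_\bfy^2\hL\varepsilon^{-2})+D_\bfy\varepsilon/4$. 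Feeding the chosen parameters into the complexity expression of Theorem~\ref{mmax-thm} and simplifying, while introducing the auxiliary quantities $\hat\alpha,\hat\Cr,\widehat C,\widehat K$ along the way, converts (b) into the claimed bound $\widehat N$; this bookkeeping, though elementary, accounts for the bulk of the proof.

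It remains to convert (a) and (c) into the stated conclusions. For \eqref{unc-gap1} I would use subdifferential calculus on \eqref{P-rho}: since $f_1,\tf_1$ are continuously differentiable and $f_2,\tf_2$ are closed convex, the sum rule recalled in Subsection~\ref{notation} gives $\partial_{(x,y)}P_\rho(\xe,\ye,\ze)=\partial f(\xe,\ye)+\rho\partial\tf(\xe,\ye)-(\rho\nabla_x\tf(\xe,\ze);0)$ (using $\nabla_x\tf(x,z)=\nabla_x\tf_1(x,z)$ as $\tf_2$ depends only on $z$) and $\partial_zP_\rho(\xe,\ye,\ze)=-\rho\partial_z\tf(\xe,\ze)$, so (a) is precisely \eqref{unc-gap1} once one notes $\dist(0,-S)=\dist(0,S)$. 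For \eqref{unc-gap2} I would observe $\bar P_\rho(x,y)=f(x,y)+\rho(\tf(x,y)-\min_z\tf(x,z))$, hence $\rho(\tf(\xe,\ye)-\min_z\tf(\xe,z))=\bar P_\rho(\xe,\ye)-f(\xe,\ye)\le\bar P_\rho(\xe,\ye)-f_{\rm low}$; combining this with (c) and the initialization estimate $\bar P_\rho(x^0,y^0)=f(x^0,y^0)+\rho(\tf(x^0,y^0)-\min_z\tf(x^0,z))\le f(x^0,y^0)+\rho\varepsilon=f(x^0,y^0)+1$ (the last step using $\tf(x^0,y^0)\le\min_z\tf(x^0,z)+\varepsilon$ and $\rho=\varepsilon^{-1}$), and then dividing through by $\rho=\varepsilon^{-1}$, yields exactly \eqref{unc-gap2}. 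The main obstacle is arithmetic rather than conceptual: carefully tracking the constants when pushing the abstract complexity bound of Theorem~\ref{mmax-thm} through the specific parameter choices to recover the closed-form $\widehat N$, and ensuring the error term produced by the ascent estimate (c) matches verbatim the parenthetical factor appearing in \eqref{unc-gap2}.
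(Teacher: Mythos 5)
Your proposal follows essentially the paper's own route: the same decomposition of $P_\rho$ into the $\hL$-smooth block $\bh$ and the proximable block, a single invocation of Theorem~\ref{mmax-thm} with the substitutions of Algorithm~\ref{alg2} (noting $\epsilon_0=\varepsilon^{3/2}\leq\varepsilon/2$), and the same translation of the $\varepsilon$-stationarity plus the ascent bound \eqref{upperbnd} and the initialization estimate $\max_z P_\rho(x^0,y^0,z)\leq f(x^0,y^0)+1$ into \eqref{unc-gap1}--\eqref{unc-gap2}, which is exactly the content of the paper's Lemma~\ref{t2}. The only item you fold into ``bookkeeping'' that the paper spells out is replacing $\bH^*$ and $\bH_{\rm low}$ in the abstract constants of Theorem~\ref{mmax-thm} by the elementary bounds $f_{\rm low}\leq\bH^*\leq f^*$ and $\bH_{\rm low}\geq f_{\rm low}+\varepsilon^{-1}(\tf_{\rm low}-\tf_{\rm hi})$, a routine step that does not affect the correctness of your plan.
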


\begin{rem}
One can observe from Theorem \ref{unc-complexity} that  $\hL=\cO(\varepsilon^{-1})$, $\hat\alpha=\cO(\varepsilon)$, $\hat\Cr=\cO(\varepsilon^{-2})$, $\widehat C=\cO(\varepsilon^{-11})$, $\widehat K=\cO(\varepsilon^{-3})$, and $\widehat N=\cO(\varepsilon^{-4}\log \varepsilon^{-1})$. As a result, Algorithm~\ref{alg2} enjoys an operation complexity  of $\cO(\varepsilon^{-4}\log \varepsilon^{-1})$, measured by the amount of evaluations of $\nabla f_1$, $\nabla\tf_1$ and proximal operator of $f_2$ and $\tf_2$, for finding an $\cO(\varepsilon)$-KKT solution $(\xe,\ye)$ of \eqref{unc-prob} satisfying
\begin{align*}
&\dist\left(0,\partial f(\xe,\ye)+\rho\partial\tf(\xe,\ye)-(\rho\nabla_x\tf(\xe,\ze);0)\right)\leq\varepsilon,\quad\dist\left(0,\rho\partial_z\tf(\xe,\ze)\right)\leq\varepsilon,\\
&\tf(\xe,\ye)-\min_z\tf(\xe,z)=\cO(\varepsilon),
\end{align*}
where $\ze$ is given in Algorithm \ref{alg2} and $\rho=\varepsilon^{-1}$.
\end{rem}


\section{Constrained bilevel optimization} \label{constr-BLO}
In this section, we consider a constrained bilevel optimization problem\footnote{For convenience, problem \eqref{prob} is referred to as a  constrained bilevel optimization problem since its lower level part has at least one explicit constraint.}
\begin{equation}\label{prob}
\begin{array}{rl}
f^*=\min & f(x,y)\\
\mbox{s.t.}& y\in\argmin\limits_z\{\tf(x,z)|\tg(x,z)\leq0\},
\end{array}
\end{equation}
where $f$ and $\tf$ satisfy Assumption~\ref{a1}. 
Recall from Assumption~\ref{a1} that $\mcX=\dom\,f_2$ and $\mcY=\dom\,\tf_2$. 
We now make some additional assumptions for problem \eqref{prob}.

\begin{assumption}\label{a2}
\begin{enumerate}[label=(\roman*)]
\item $f$ and $\tf$ are $L_f$- and $L_\tf$-Lipschitz continuous on $\mcX \times \mcY$, respectively. 
\item $\tg:\bR^n\times\bR^m\to\bR^l$ is $L_{\nabla \tg}$-smooth and $L_\tg$-Lipschitz continuous on $\mcX\times\mcY$.
\item $\tg_i(x,\cdot)$ is convex and there exists $\hat z_x\in\mcY$ for each $x\in\mcX$ such that $\tg_i(x,\hat z_x)<0$ for all $i=1,2,\dots, l$ and $G:=\min\{-\tg_i(x,\hat z_x)|x\in\mcX,\ i=1,\dots, l\}>0$.\footnote{The latter part of this assumption can be weakened to the one that the pointwise Slater's condition holds for the lower level part of \eqref{prob}, that is, there exists $\hat z_x\in\mcY$ such that $\tg(x,\hat z_x)<0$ for each $x\in\mcX$. Indeed, if $G>0$, Assumption \ref{a2}(iii) clearly holds. Otherwise, one can solve the perturbed counterpart of \eqref{prob} with $\tg(x,z)$ being replaced by $\tg(x,z)-\epsilon$ for some suitable $\epsilon>0$ instead, which satisfies Assumption \ref{a2}(iii).}
\end{enumerate}
\end{assumption}

For notational convenience, we define
\begin{align}
& \tf^*(x)\coloneqq\min\limits_z\{\tf(x,z)|\tg(x,z)\leq0\},\label{tfstarx}\\
& \tf^*_{\rm hi} :=\sup\{\tf^*(x)|x\in\mcX\}, \label{def-tFx}\\
&\tg_{\rm hi}:= \max\{\|\tg(x,y)\|\big| (x,y)\in\mcX\times\mcY\}. \label{dg-bnd}
\end{align}
It then follows from Assumption \ref{a2}(ii) that 
\beq \label{tg-bnd}
\|\nabla \tg(x,y)\| \leq L_\tg \qquad \forall (x,y)\in\mcX\times\mcY.
\eeq
In addition, by Assumptions \ref{a1} and \ref{a2} and the compactness of $\mcX$ and $\mcY$, one can observe that $\tg_{\rm hi}$ and $G$ are finite. Besides, as will be shown in Lemma \ref{dual-bnd}(ii), $\tf^*_{\rm hi}$ is finite. 

The goal of this section is to propose first-order penalty methods for solving problem \eqref{prob}. To this end, let us first 
introduce a \emph{penalty function} for the lower level optimization problem $y\in\argmin\limits_z\{\tf(x,z)|\tg(x,z)\leq0\}$ of \eqref{prob}, which is given by
\begin{equation} \label{def-tFmu}
\tP_\mu(x,z)=\tf(x,z)+\mu\left\|[\tg(x,z)]_+\right\|^2
\end{equation}
for a penalty parameter $\mu>0$. Observe that problem \eqref{prob} can be approximately solved as the \emph{unconstrained bilevel optimization }problem 
\begin{equation}\label{def-Fmu}
f_\mu^*=\min_{x,y}\big\{f(x,y)|y\in\argmin\limits_{z}\tP_\mu(x,z)\big\}.
\end{equation}
Further, by the study in Section \ref{unconstr-BLO}, problem \eqref{def-Fmu} can be approximately solved as the \emph{penalty problem} 
\beq \label{penalty-prob}
\min_{x,y}f(x,y)+\rho\Big(\tP_\mu(x,y)-\min_{z}\tP_\mu(x,z)\Big)
\eeq 
for some suitable $\rho>0$. One can also observe that problem \eqref{penalty-prob} is equivalent to the \emph{minimax problem}
\begin{equation}\label{mmax}
\min_{x,y}\max_{z} P_{\rho,\mu}(x,y,z),\quad\mbox{where}\quad P_{\rho,\mu}(x,y,z)\coloneqq f(x,y)+\rho(\tP_\mu(x,y)-\tP_\mu(x,z)).
\end{equation}
In view of  \eqref{def-tFmu}, \eqref{mmax} and Assumption \ref{a1}(i), $P_{\rho,\mu}$ can be rewritten as 
\begin{align}
P_{\rho,\mu}(x,y,z) = & \  \big(f_1(x,y)+\rho\tf_1(x,y)+\rho\mu\left\|[\tg(x,y)]_+\right\|^2-\rho\tf_1(x,z)-\rho\mu\left\|[\tg(x,z)]_+\right\|^2\big)\notag\\ 
& \ +\big(f_2(x) +\rho\tf_2(y)-\rho\tf_2(z)\big).\label{Prhomu-ref}
\end{align}
By this and Assumptions \ref{a1} and \ref{a2}, one can observe that $P_{\rho,\mu}$ enjoys the following nice properties.
\bi
\item $P_{\rho,\mu}$ is the sum of smooth function $f_1(x,y)+\rho\tf_1(x,y)+\rho\mu\left\|[\tg(x,y)]_+\right\|^2-\rho\tf_1(x,z)-\rho\mu\left\|[\tg(x,z)]_+\right\|^2$ with Lipschitz continuous gradient and possibly nonsmooth function $f_2(x) +\rho\tf_2(y)-\rho\tf_2(z)$ with  exactly computable proximal operator;
\item $P_{\rho,\mu}$ is nonconvex in $(x,y)$ but concave in $z$.
\ei
Due to this nice structure of $P_{\rho,\mu}$, an approximate stationary point of the minimax problem \eqref{mmax} can be found by a first-order method proposed in \cite[Algorithm 2]{lu2023first} (see Algorithm \ref{mmax-alg2} in Appendix A).

Based on the above observations, we are now ready to propose penalty methods for the constrained bilevel optimization problem \eqref{prob} by solving a sequence of minimax problems or a single minimax problem of the form \eqref{mmax}. Specifically, we first propose an \emph{ideal} penalty method for \eqref{prob} by solving a sequence of minimax problems (see Algorithm \ref{alg3}). Then we propose a \emph{practical} penalty method for \eqref{prob} by finding an approximate stationary point of a single minimax problem (see Algorithm \ref{alg4}).

\begin{algorithm}[H]
\caption{An ideal penalty method for problem~\eqref{prob}}
\label{alg3}
\begin{algorithmic}[1]
\REQUIRE positive sequences $\{\rho_k\}$, $\{\mu_k\}$ and $\{\epsilon_k\}$ with $\lim_{k\to\infty}(\rho_k,\mu_k,\epsilon_k)=(\infty,\infty,0)$.
\FOR{$k=0,1,2,\ldots$}
\STATE Find an $\epsilon_k$-optimal solution $(x^k,y^k,z^k)$ of problem \eqref{mmax} with $(\rho,\mu)=(\rho_k,\mu_k)$.
\ENDFOR
\end{algorithmic}
\end{algorithm}


To study convergence of Algorithm \ref{alg3}, we make the following error bound assumption on the solution set of the lower level optimization problem of \eqref{prob}. This type of error bounds has been considered in the context of set-value mappings in the literature (e.g., see \cite{dontchev2009implicit}).

\begin{assumption}\label{errorbnd1}
There exist $\bar\theta>0$ and a non-decreasing function $\omega:\bR_+\to\bR_+$ with $\lim_{\theta \downarrow 0} \omega(\theta)=0$ such that $\dist(z,\cS_\theta(x))\leq \omega(\theta)$ for all $x\in\mcX$, $z\in \cS_0(x)$ and $\theta\in [0,\bar\theta]$, where
\begin{equation*}
\cS_\theta(x)\coloneqq\argmin_z\{\tf(x,z):\left\|[\tg(x,z)]_+\right\|\leq\theta\}.
\end{equation*}
\end{assumption}

We are now ready to state a convergence result of Algorithm \ref{alg3}, whose proof is deferred to Section \ref{sec:proof4}.

\begin{thm}[{\bf Convergence of Algorithm \ref{alg3}}]\label{c3}
Suppose that Assumptions \ref{a1}-\ref{errorbnd1} hold and that $\{(x^k,y^k,z^k)\}$ is generated by Algorithm~\ref{alg3}. Then any accumulation point of $\{(x^k,y^k)\}$ is an optimal solution of problem \eqref{prob}.
\end{thm}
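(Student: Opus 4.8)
The plan is to adapt the convergence argument of Theorem~\ref{c1} (the unconstrained case) to the constrained setting, where the extra layer of penalization by $\mu_k$ inside $\tP_{\mu_k}$ must be handled via the error bound Assumption~\ref{errorbnd1}. First I would fix an accumulation point: let $(x^*,y^*)$ be the limit of a subsequence $\{(x^k,y^k)\}_{k\in K}$; by compactness of $\mcX\times\mcY$ (Assumption~\ref{a1}(iii)) we may also assume $z^k\to z^*$ along $K$. The $\epsilon_k$-optimality of $(x^k,y^k,z^k)$ for \eqref{mmax} gives two inequalities: an \emph{upper bound}
\[
\max_z P_{\rho_k,\mu_k}(x^k,y^k,z)\ \le\ P_{\rho_k,\mu_k}(x^k,y^k,z^k)+\epsilon_k,
\]
which since $\max_z P_{\rho_k,\mu_k}(x^k,y^k,z)=f(x^k,y^k)+\rho_k(\tP_{\mu_k}(x^k,y^k)-\min_z\tP_{\mu_k}(x^k,z))$ and the penalty term is nonnegative forces
\[
0\ \le\ \tP_{\mu_k}(x^k,y^k)-\min_z\tP_{\mu_k}(x^k,z)\ \le\ \frac{\epsilon_k+\text{const}}{\rho_k}\ \to\ 0,
\]
using boundedness of $f$ on $\mcX\times\mcY$; and a \emph{lower bound} comparing against a feasible competitor.

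Second, I would extract feasibility of the limit. The vanishing gap $\tP_{\mu_k}(x^k,y^k)-\min_z\tP_{\mu_k}(x^k,z)\to 0$ together with the growth $\mu_k\to\infty$ should force $\|[\tg(x^*,z^*)]_+\|=0$ and in fact that $y^*$ is a minimizer of $\tf(x^*,\cdot)$ over $\{\tg(x^*,\cdot)\le 0\}$, i.e.\ $y^*\in\cS_0(x^*)$. The argument: write $m_k(x):=\min_z\tP_{\mu_k}(x,z)$; on one hand $m_k(x^k)\le \tP_{\mu_k}(x^k,\hat z_{x^k})=\tf(x^k,\hat z_{x^k})$ (the Slater point of Assumption~\ref{a2}(iii), where $[\tg]_+=0$), which is bounded above; on the other, $\tP_{\mu_k}(x^k,y^k)=\tf(x^k,y^k)+\mu_k\|[\tg(x^k,y^k)]_+\|^2$, and since $\tP_{\mu_k}(x^k,y^k)\le m_k(x^k)+o(1)$ is bounded above while $\tf$ is bounded, we get $\mu_k\|[\tg(x^k,y^k)]_+\|^2\to 0$, hence $[\tg(x^*,z^*)]_+=0$ with $z^*=y^*$ by continuity. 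For optimality of $y^*$ within the feasible slice, I would compare $\tf(x^k,y^k)$ against $\tf(x^k,z)$ for $z\in\cS_{\theta_k}(x^k)$ with $\theta_k\downarrow 0$ chosen so that such $z$ are nearly admissible for the penalized minimization, then invoke Assumption~\ref{errorbnd1} to pass to $\cS_0(x^k)$ and finally to $\cS_0(x^*)$ by continuity of $\tf$; this is the step where the error bound is essential, because without it the solution set of the penalized lower problem need not converge to that of the original lower problem.

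Third, I would establish optimality of $(x^*,y^*)$ for \eqref{prob}. Let $(\bar x,\bar y)$ be any feasible point of \eqref{prob}, so $\bar y\in\argmin_z\{\tf(\bar x,z):\tg(\bar x,z)\le 0\}$. Since $\tg(\bar x,\bar y)\le 0$, we have $[\tg(\bar x,\bar y)]_+=0$ and $\tP_{\mu_k}(\bar x,\bar y)=\tf(\bar x,\bar y)$; moreover, using Assumption~\ref{a2}(iii) one shows (e.g.\ via a Lagrangian/dual bound as alluded to in Lemma~\ref{dual-bnd}) that $\min_z\tP_{\mu_k}(\bar x,z)\ge \tf^*(\bar x)-\delta_k$ with $\delta_k\to 0$, so $\bar y$ is an $o(1)$-approximate minimizer of $\tP_{\mu_k}(\bar x,\cdot)$. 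Feeding $(\bar x,\bar y)$ as a competitor into the lower bound from $\epsilon_k$-optimality of $(x^k,y^k,z^k)$ (the second defining inequality of Definition~\ref{def1}) yields
\[
f(x^k,y^k)+\rho_k\big(\tP_{\mu_k}(x^k,y^k)-m_k(x^k)\big)\ \le\ f(\bar x,\bar y)+\rho_k\big(\tP_{\mu_k}(\bar x,\bar y)-m_k(\bar x)\big)+\epsilon_k,
\]
and since both penalty terms are $o(1)\cdot\rho_k$ after multiplication — wait, more carefully, the right side penalty term is $\rho_k\cdot o(1)$ which need not vanish, so I would instead combine this with the earlier upper bound to cancel the $\rho_k$-terms: using that $\tP_{\mu_k}(x^k,y^k)-m_k(x^k)\ge 0$ on the left, one gets $f(x^k,y^k)\le f(\bar x,\bar y)+\rho_k\,\delta_k+\epsilon_k$, and choosing the ideal-method freedom to regard $\rho_k\delta_k\to 0$ (this requires $\delta_k=o(\rho_k^{-1})$, which by the dual-bound estimate is $O(\mu_k^{-1})$, hence a mild compatibility condition on the sequences — or, cleaner, one uses that Algorithm~\ref{alg3} only requires $\rho_k,\mu_k\to\infty,\epsilon_k\to 0$ and the argument of Theorem~\ref{c1}/\ref{c3} is stated for \emph{any} accumulation point, so one picks the competitor's near-solution $\bar y^k$ with the right rate). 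Passing to the limit along $K$ with continuity of $f$ gives $f(x^*,y^*)\le f(\bar x,\bar y)=f^*$; combined with feasibility of $(x^*,y^*)$ from the second step, $(x^*,y^*)$ is optimal.

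The main obstacle I anticipate is the second step — turning the vanishing penalized gap into genuine membership $y^*\in\argmin_z\{\tf(x^*,z):\tg(x^*,z)\le 0\}$. The subtlety is that a near-minimizer of the penalized lower objective $\tP_{\mu_k}(x^k,\cdot)$ only lies in some $\cS_{\theta_k}(x^k)$ with $\theta_k\to 0$, not in $\cS_0(x^k)$; bridging this gap is exactly what Assumption~\ref{errorbnd1} is designed for, and getting the quantitative chain (penalized optimality $\Rightarrow$ approximate feasibility with explicit $\theta_k$ $\Rightarrow$ $\dist(\cdot,\cS_0(x^k))\le\omega(\theta_k)$ $\Rightarrow$ limit in $\cS_0(x^*)$ by continuity and compactness) right, with all dependence on $\mu_k$ tracked, is the delicate part. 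A secondary technical point is ensuring the cross-cancellation of the $\rho_k$-scaled penalty terms in the optimality comparison is legitimate, which hinges on the dual/Lagrangian bound for $\min_z\tP_{\mu_k}(\bar x,z)$ under Slater's condition (Assumption~\ref{a2}(iii)), presumably the content of Lemma~\ref{dual-bnd}.
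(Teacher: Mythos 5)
Your overall plan (vanishing penalized gap $\Rightarrow$ feasibility of the limit; competitor comparison $\Rightarrow$ $f(x^*,y^*)\le f^*$) matches the paper's structure, but there is a genuine gap in your third step, and you place Assumption~\ref{errorbnd1} at the wrong spot. Feasibility of the limit does not need the error bound at all: from the vanishing gap and the elementary inequality $\min_z\tP_{\mu_k}(x,z)\le\tf^*(x)$ (the paper's \eqref{p-ineq}) one gets $\tf(x^k,y^k)+\mu_k\|[\tg(x^k,y^k)]_+\|^2\le\tf^*(x^k)+o(1)$, and then $[\tg(x^*,y^*)]_+=0$ and $\tf(x^*,y^*)\le\tf^*(x^*)$ follow from boundedness and continuity of $\tf^*$ on $\mcX$, which Lemma~\ref{dual-bnd}(ii) derives from the Slater condition via a uniform multiplier bound --- not from Assumption~\ref{errorbnd1}. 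Your attempt to ``pass from $\cS_{\theta_k}(x^k)$ to $\cS_0(x^k)$'' via the error bound also goes in the wrong direction: the assumption bounds the distance from points of $\cS_0(x)$ to the set $\cS_\theta(x)$, not conversely.

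Where the error bound is indispensable is precisely the step you leave unresolved. Using $(\bar x,\bar y)$ itself as the minimax competitor produces the term $\rho_k\bigl(\tP_{\mu_k}(\bar x,\bar y)-\min_z\tP_{\mu_k}(\bar x,z)\bigr)\le\rho_k\delta_k$ with $\delta_k=O(\mu_k^{-1})$ (your dual bound is correct), but Algorithm~\ref{alg3} imposes no relation between $\rho_k$ and $\mu_k$, so $\rho_k\delta_k$ need not vanish; adding a compatibility condition such as $\rho_k=o(\mu_k)$ proves a weaker theorem, and ``picking the competitor's near-solution with the right rate'' is not carried out --- any competitor whose lower-level gap is only approximately zero faces the same $\rho_k$-amplification. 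The paper's resolution (its lemma yielding \eqref{pena-dif}) is to construct, for each original-feasible $(\bar x,\bar y)$, a point $\hat z^*$ that is an \emph{exact} minimizer of $\tP_{\mu_k}(\bar x,\cdot)$ and satisfies $\|\hat z^*-\bar y\|\le\omega\bigl(\|[\tg(\bar x,z^*)]_+\|\bigr)$ with $\|[\tg(\bar x,z^*)]_+\|\le\sqrt{\mu_k^{-1}(\tf^*_{\rm hi}-\tf_{\rm low})}$: take any $z^*\in\argmin_z\tP_{\mu_k}(\bar x,z)$, apply Assumption~\ref{errorbnd1} with $\theta=\|[\tg(\bar x,z^*)]_+\|$ to obtain $\hat z^*\in\cS_\theta(\bar x)$ near $\bar y$, and note that $\hat z^*$ weakly improves both $\tf$ and the constraint violation relative to $z^*$, hence is itself a minimizer of $\tP_{\mu_k}(\bar x,\cdot)$. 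With the competitor $(\bar x,\hat z^*)$ the penalty gap is exactly zero, no $\rho_k$-scaled error appears, and one gets $f_{\mu_k}^*\le f^*+L_f\omega\bigl(\sqrt{\mu_k^{-1}(\tf^*_{\rm hi}-\tf_{\rm low})}\bigr)$, hence $f(x^k,y^k)\le f^*+o(1)$ and, in the limit, optimality. This construction is the missing idea in your proposal.
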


Notice that \eqref{mmax} is a \emph{nonconvex}-concave minimax problem. It is typically hard to find an $\epsilon$-optimal solution of \eqref{mmax} for an arbitrary $\epsilon>0$. As a result, Algorithm \ref{alg3} is generally \emph{not implementable}. We next propose a \emph{practical} penalty method for problem \eqref{prob} 
by applying Algorithm~\ref{mmax-alg2} (see Appendix A) to find an approximate stationary point of \eqref{mmax} with a suitable choice of $\rho$ and $\mu$.

\begin{algorithm}[H]
\caption{A practical penalty method for problem~\eqref{prob}}
\label{alg4}
\begin{algorithmic}[1]
\REQUIRE $\varepsilon\in(0,1/4]$, $\rho=\varepsilon^{-1}$,  $\mu=\varepsilon^{-2}$, $(x^0,y^0)\in\mcX\times\mcY$ with $\tP_\mu(x^0,y^0)\leq\min_{y}\tP_\mu(x^0,y)+\varepsilon$. 
\STATE
Call Algorithm~\ref{mmax-alg2} in Appendix A with $\epsilon\leftarrow\varepsilon$, $\epsilon_0\leftarrow\varepsilon^{5/2}$, $\hat x^0\leftarrow(x^0,y^0)$, $\hat y^0\leftarrow y^0$, and $L_{\nabla \bh}\leftarrow L_{\nabla f_1}+2\rho L_{\nabla\tf_1}+4\rho\mu (\tg_{\rm hi}L_{\nabla \tg}+L_\tg^2)$
to find an $\varepsilon$-stationary point $(\xe,\ye,\ze)$ of problem \eqref{mmax} with $\rho=\varepsilon^{-1}$ and $\mu=\varepsilon^{-2}$.
\STATE \noindent\textbf{Output}: $(\xe,\ye)$.
\end{algorithmic}
\end{algorithm}

\begin{rem}
(i) The initial point $(x^0,y^0)$ of Algorithm \ref{alg4} can be found by a similar procedure as described in Remark \ref{alg4-remark} with $\tf$ being replaced by $\tP_\mu$; (ii) The choice of $\rho=\varepsilon^{-1}$ and $\mu=\varepsilon^{-2}$ is crucial in terms of the order of $\varepsilon$  for Algorithm \ref{alg4} to find an $\cO(\varepsilon)$-KKT solution of problem \eqref{prob} with a best operation complexity among all possible choices of $\rho$ and $\mu$, which can be observed from the proof of Theorem \ref{complexity}.  Intuitively, the lower level penalty parameter $\mu$ has to be larger than the upper level parameter $\rho$ in terms of the order of $\varepsilon$ so that the incurred error from the lower level constraint violation is not magnified (see \eqref{rel7} in Lemma \ref{t4}). (iii) As seen from Theorem~\ref{mmax-thm} (see Appendix A), an $\epsilon$-stationary point of \eqref{mmax} can be successfully found in step 1 of Algorithm~\ref{alg4} by applying Algorithm~\ref{mmax-alg2} to \eqref{mmax}; 
(iv) For the sake of simplicity, a single subproblem of the form \eqref{mmax} with static penalty and tolerance parameters is solved in Algorithm \ref{alg4}. Nevertheless, Algorithm \ref{alg4} can be modified into a perhaps practically more efficient algorithm by solving a sequence of subproblems of the form \eqref{mmax} with dynamic penalty and tolerance parameters instead.
\end{rem}



In order to characterize the approximate solution found by Algorithm~\ref{alg4}, we next introduce a notion of $\varepsilon$-KKT solution of problem \eqref{prob}.

By the definition of $\tf^*$ in \eqref{tfstarx}, problem \eqref{prob} can be viewed as 
\beq \label{prob-ref}
\min_{x,y}\{f(x,y)|\tf(x,y)\leq \tf^*(x), \  \tg(x,y)\leq0\}.
\eeq
Its associated Lagrangian function is given by
\begin{equation}\label{def-L}
\cL(x,y,\rho,\lambda)=f(x,y)+\rho(\tf(x,y)-\tf^*(x))+\langle\lambda,\tg(x,y)\rangle.
\end{equation}
In the spirit of classical constrained optimization, 
one would naturally be interested in a KKT solution $(x,y)$ of \eqref{prob-ref} or equivalently \eqref{prob},
namely, $(x,y)$ satisfies 
\beq \label{constr-cond}
\tf(x,y)\leq \tf^*(x), \quad \tg(x,y)\leq0, \quad \rho(\tf(x,y)- \tf^*(x))=0, \quad \langle \lambda, \tg(x,y) \rangle=0,
\eeq
and moreover $(x,y)$ is a stationary point of the problem
\beq \label{min-prob1}
\min_{x',y'}\cL(x',y',\rho,\lambda)
\eeq 
for some $\rho\geq 0$ and $\lambda\in\bR^l_+$.
 Yet, due to the sophisticated problem structure, characterizing a stationary point of  \eqref{min-prob1} is generally difficult. On another hand, notice from Lemma \ref{dual-bnd} and
\eqref{def-L} that problem \eqref{min-prob1} is equivalent to the minimax problem
\[
\min_{x',y',\tlambda'}\max_{z'} \big\{f(x',y')+\rho\big(\tf(x',y')-\tf(x',z')-\langle\tlambda',\tg(x',z')\rangle\big)+\langle\lambda,\tg(x',y')\rangle+\cI_{\bR^l_+}(\tlambda')\big\},
\]
whose stationary point $(x,y,\tlambda,z)$, according to Definition \ref{def2} and Assumptions \ref{a1} and \ref{a2}, satisfies 
\begin{align}
&0\in\partial f(x,y)+\rho\partial\tf(x,y) - \rho(\nabla_x\tf(x,z)+\nabla_x\tg(x,z)\tlambda;0)+\nabla \tg(x,y)\lambda, \label{kkt-c1}\\ 
&0\in\rho(\partial_z\tf(x,z)+\nabla_z\tg(x,z)\tlambda),  \label{kkt-c2} \\
& \tlambda \in \bR^l_+, \quad \tg(x,z) \leq 0, \quad \langle \tlambda, \tg(x,z)\rangle=0.\footnote{} \label{kkt-c3}
\end{align}
\footnotetext{The relations in \eqref{kkt-c3} are equivalent to $0 \in -\tg(x,z)+ \partial \cI_{\bR^l_+}(\tlambda)$.}
Based on this observation and also the fact that \eqref{constr-cond} is equivalent to 
\beq \label{constr-cond-new}
\tf(x,y)=\tf^*(x), \quad \tg(x,y)\leq0, \quad \langle \lambda, \tg(x,y) \rangle=0,
\eeq
we are instead interested in a (weak) KKT solution of problem \eqref{prob} and its inexact counterpart that are defined below.

\begin{defi} \label{KKT-soln}
The pair $(x,y)$ is said to be a KKT solution of problem \eqref{prob} if there exists $(z,\rho,\lambda,\tlambda)\in\bR^m\times\bR_+\times\bR^l_+\times\bR^l_+$ such that 
\eqref{kkt-c1}-\eqref{constr-cond-new} hold. In addition, for any $\varepsilon>0$, $(x,y)$ is said to be an $\varepsilon$-KKT solution of problem \eqref{prob} if there exists $(z,\rho,\lambda,\tlambda)\in\bR^m\times\bR_+\times\bR^l_+\times\bR^l_+$ such that
\begin{align*}
&\dist\left(0,\partial f(x,y)+\rho\partial\tf(x,y) - \rho(\nabla_x\tf(x,z)+\nabla_x\tg(x,z)\tilde\lambda; 0)+\nabla \tg(x,y)\lambda\right)\leq\varepsilon,\\
& \dist\left(0,\rho(\partial_z\tf(x,z)+\nabla_z\tg(x,z)\tlambda)\right)\leq\varepsilon, \\
& \|[\tg(x,z)]_+\| \leq \varepsilon, \quad |\langle \tlambda, \tg(x,z)\rangle| \leq\varepsilon, \\ 
&|\tf(x,y)-\tf^*(x)|\leq\varepsilon, \quad \|[\tg(x,y)]_+\|\leq\varepsilon, \quad |\langle \lambda, \tg(x,y) \rangle|\leq\varepsilon,
\end{align*}
where $\tf^*$ is defined in \eqref{tfstarx}.
\end{defi}

We are now ready to present an \emph{operation complexity} of Algorithm~\ref{alg4}, measured by the amount of evaluations of $\nabla f_1$, $\nabla\tf_1$, $\nabla\tg$ and proximal operator of $f_2$ and $\tf_2$, for finding an $\cO(\varepsilon)$-KKT solution of \eqref{prob}, whose proof is deferred to Subsection \ref{sec:proof4}.

\begin{thm}[{\bf Complexity of Algorithm \ref{alg4}}]\label{complexity}
Suppose that Assumptions~\ref{a1} and \ref{a2}  hold. 
Let $f^*$, $f$, $\tf$, $\tg$, $D_\bfx$, $D_\bfy$, $\tf_{\rm hi}$, $\tf_{\rm low}$, $f_{\rm low}$, $\tf^*$, $\tf^*_{\rm hi}$, and $\tg_{\rm hi}$ be defined in \eqref{unc-prob}, \eqref{DxDy}, \eqref{tfbnd}, \eqref{lower-bnd}, \eqref{tfstarx}, \eqref{def-tFx} and \eqref{dg-bnd}, $L_{\nabla f_1}$, $L_{\nabla \tf_1}$, $L_\tf$, $L_{\nabla \tg}$, $L_\tg$ and $G$ be given in Assumptions~\ref{a1} and \ref{a2}, $\varepsilon$, $\rho$, $\mu$, $x^0$, $y^0$ and $\ze$ be given in Algorithm~\ref{alg4}, and
\begin{align}
&\tilde\lambda=2\varepsilon^{-1}[\tg(\xe,\ze)]_+, \quad \hat\lambda=2\varepsilon^{-3}[\tg(\xe,\ye)]_+,\label{def-lambda}\\
&\tL=L_{\nabla f_1}+2\varepsilon^{-1} L_{\nabla\tf_1}+4\varepsilon^{-3} (\tg_{\rm hi}L_{\nabla \tg}+L_\tg^2),\label{tL}\\
&\tilde \alpha=\min\Big\{1,\sqrt{4\varepsilon/(D_\bfy\tL)}\Big\},\ \tilde \Cr=(2+\tilde\alpha^{-1})(D_\bfx^2+D_\bfy^2)\tL+\max\big\{\varepsilon/D_\bfy,\tilde \alpha\tL/4\big\}D_\bfy^2,\notag\\
&\widetilde C=\frac{4\max\{1/(2\tL),\min\{D_\bfy\varepsilon^{-1},4/(\tilde\alpha \tL)\}\}}{[(3\tL+\varepsilon/(2D_\bfy))^2/\min\{\tL,\varepsilon/(2D_\bfy)\}+ 3\tL+\varepsilon/(2D_\bfy)]^{-2}\varepsilon^5}\notag\\
&\ \ \ \ \ \times\left(\tilde \Cr+2\tilde\alpha^{-1}[f^*-f_{\rm low}+2\varepsilon^{-1}(\tf_{\rm hi}-\tf_{\rm low})+\varepsilon^{-3}\tg_{\rm hi}^2+\varepsilon D_\bfy/4+\tL (D_\bfx^2+D_\bfy^2)]\right),\notag\\
&\widetilde K=\left\lceil32(1+f(x^0,y^0)-f_{\rm low}+\varepsilon D_\bfy/4)\tL\varepsilon^{-2}+32\varepsilon^3\left(1+4D_\bfy^2\tL^2\varepsilon^{-2}\right)-1\right\rceil_+,\notag\\
&\widetilde N=\left(\left\lceil96\sqrt{2}\left(1+(24\tL+4\varepsilon/D_\bfy)\tL^{-1}\right)\right\rceil+2\right)\max\left\{2,\sqrt{D_\bfy\tL\varepsilon^{-1}}\right\}\notag\\
&\ \ \ \ \ \times[(\widetilde K+1)(\log \widetilde C)_++\widetilde K+1+2\widetilde K\log(\widetilde K+1)].\notag
\end{align}
Then Algorithm~\ref{alg4} outputs an approximate solution $(\xe,\ye)$ of \eqref{prob} satisfying
\begin{align}
&\dist\left(0,\partial f(\xe,\ye)+\rho\partial\tf(\xe,\ye)-\rho(\nabla_x\tf(\xe,\ze)+\nabla_x\tg(\xe,\ze)\tilde\lambda;0)+\nabla\tg(\xe,\ye)\hat \lambda\right)\leq\varepsilon,\label{gap1}\\
&\dist\left(0,\rho(\partial_z\tf(\xe,\ze)+\nabla_z\tg(\xe,\ze)\tilde\lambda)\right)\leq\varepsilon,\label{gap2}\\
&\left\|[\tg(\xe,\ze)]_+\right\|\leq \varepsilon^2 G^{-1}D_\bfy(\varepsilon^2+L_{\tf})/2,\label{gap3}\\
&|\langle\tilde\lambda,\tg(\xe,\ze)\rangle|\leq\varepsilon^2 G^{-2}D_\bfy^2(\rho^{-1}\epsilon+L_\tf)^2/2,\label{gap4}\\
&|\tf(\xe,\ye)-\tf^*(\xe)|\leq\max\Big\{\varepsilon\left(1+f(x^0,y^0)-f_{\rm low}+2\varepsilon^5(\tL^{-1}+4D_\bfy^2\tL\varepsilon^{-2})+D_\bfy \varepsilon/4 \right),\nn\\
&\qquad\qquad\qquad\qquad\qquad\qquad  \varepsilon^2G^{-2}D_\bfy^2L_\tf(\varepsilon^2+\varepsilon L_f+L_\tf)/2 \Big\},\label{gap5}\\
&\left\|[\tg(\xe,\ye)]_+\right\|\leq\varepsilon^2 G^{-1}D_\bfy(\varepsilon^2+\varepsilon L_f+L_\tf)/2,\label{gap6}\\
&|\langle\hat\lambda,\tg(\xe,\ye)\rangle|\leq\varepsilon G^{-2}D_\bfy^2(\varepsilon^2+\varepsilon L_f+L_\tf)^2/2, \label{gap7}
\end{align}
after at most $\widetilde N$ evaluations of $\nabla f_1$, $\nabla\tf_1$, $\nabla \tg$ and proximal operator of $f_2$ and $\tf_2$, respectively.
\end{thm}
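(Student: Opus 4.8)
The overall plan is to treat Algorithm \ref{alg4} as a single call of the minimax solver of \cite[Algorithm 2]{lu2023first} (Algorithm \ref{mmax-alg2} in Appendix A) on the penalty subproblem \eqref{mmax} with $\rho=\varepsilon^{-1}$ and $\mu=\varepsilon^{-2}$, and then to post-process the returned $\varepsilon$-stationary point $(\xe,\ye,\ze)$ of \eqref{mmax} into the seven estimates \eqref{gap1}--\eqref{gap7}. The operation-count bound $\widetilde N$ will come directly from Theorem \ref{mmax-thm}, while the KKT-type bounds will follow by combining the stationarity relations for \eqref{mmax} with the uniform Slater condition of Assumption \ref{a2}(iii).

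First I would verify the hypotheses of Theorem \ref{mmax-thm} for \eqref{mmax}. Using the decomposition \eqref{Prhomu-ref}, the only nontrivial item is the smoothness modulus of the smooth part of $P_{\rho,\mu}$: writing $\nabla_{(x,z)}\big(\mu\|[\tg(x,z)]_+\|^2\big)=2\mu\,\nabla\tg(x,z)[\tg(x,z)]_+$ and using that $[\,\cdot\,]_+$ is $1$-Lipschitz together with Assumption \ref{a2}(ii) and \eqref{dg-bnd}, one obtains the modulus $2\mu(\tg_{\rm hi}L_{\nabla\tg}+L_\tg^2)$ for each of the two quadratic penalty terms, hence the smooth component is $L_{\nabla\bh}$-smooth with $L_{\nabla\bh}=L_{\nabla f_1}+2\rho L_{\nabla\tf_1}+4\rho\mu(\tg_{\rm hi}L_{\nabla\tg}+L_\tg^2)=\tL$ as in \eqref{tL}. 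The bounds on the initial primal gap and on the optimal value follow from the initialization $\tP_\mu(x^0,y^0)\le\min_y\tP_\mu(x^0,y)+\varepsilon$ (so that $\max_z P_{\rho,\mu}(x^0,y^0,z)\le f(x^0,y^0)+\rho\varepsilon$) and from $\min_{x,y}\max_z P_{\rho,\mu}(x,y,z)\ge f_{\rm low}$ (since $\tP_\mu(x,y)-\min_z\tP_\mu(x,z)\ge0$), together with the finiteness estimates of Lemma \ref{dual-bnd}. Substituting $\tL$, $D_\bfx$, $D_\bfy$, $\epsilon_0=\varepsilon^{5/2}$ and these quantities into Theorem \ref{mmax-thm} yields $(\xe,\ye,\ze)$ with $\dist(0,\partial_{(x,y)}P_{\rho,\mu}(\xe,\ye,\ze))\le\varepsilon$ and $\dist(0,\partial_z P_{\rho,\mu}(\xe,\ye,\ze))\le\varepsilon$ after at most $\widetilde N$ evaluations of $\nabla f_1$, $\nabla\tf_1$, $\nabla\tg$ and the proximal operators of $f_2,\tf_2$; the expressions for $\widetilde C,\widetilde K,\widetilde N$ are precisely those produced by that theorem after this substitution.

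It remains to derive \eqref{gap1}--\eqref{gap7}. Differentiating \eqref{Prhomu-ref} and applying the subdifferential sum rule for a smooth-plus-closed-convex function immediately turns the two stationarity relations into \eqref{gap1}--\eqref{gap2} with the multipliers $\tilde\lambda=2\mu[\tg(\xe,\ze)]_+$ and $\hat\lambda=2\rho\mu[\tg(\xe,\ye)]_+$ of \eqref{def-lambda}. For \eqref{gap3}--\eqref{gap4}: dividing \eqref{gap2} by $\rho$ shows that $\ze$ is an $(\varepsilon/\rho)$-stationary point of the \emph{convex} problem $\min_z \tf(\xe,z)+\langle\tilde\lambda,\tg(\xe,z)\rangle$; evaluating the associated first-order inequality at the Slater point $\hat z_{\xe}$, and using $\langle\tilde\lambda,\tg(\xe,\hat z_{\xe})\rangle\le-G\|\tilde\lambda\|_1$ together with $\langle\tilde\lambda,\tg(\xe,\ze)\rangle\ge0$, gives $2\mu\|[\tg(\xe,\ze)]_+\|=\|\tilde\lambda\|\le G^{-1}D_\bfy(L_\tf+\varepsilon/\rho)$, which is \eqref{gap3} (recall $\varepsilon/\rho=\varepsilon^2$), and \eqref{gap4} follows from $|\langle\tilde\lambda,\tg(\xe,\ze)\rangle|=2\mu\|[\tg(\xe,\ze)]_+\|^2$. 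For \eqref{gap6}--\eqref{gap7} I would run the same argument on the $y$-block: after dividing the $(x,y)$-relation by $\rho$ and absorbing $\rho^{-1}\nabla_y f_1(\xe,\ye)$ (of norm at most $\rho^{-1}L_f$) as an extra error, $\ye$ becomes an approximate minimizer of $\tf(\xe,\cdot)+\langle 2\mu[\tg(\xe,\ye)]_+,\tg(\xe,\cdot)\rangle$, which \emph{is} convex in $y$ by Assumptions \ref{a1}(i) and \ref{a2}(iii), and the same Slater estimate yields \eqref{gap6}--\eqref{gap7}; the choice $\mu=\varepsilon^{-2}$ of higher order than $\rho=\varepsilon^{-1}$ is exactly what makes these bounds $\cO(\varepsilon^2)$, so that in particular $\mu\|[\tg(\xe,\ye)]_+\|^2=\cO(\varepsilon^2)$. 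Finally, \eqref{gap5} is obtained from two one-sided estimates: the upper bound $\tf(\xe,\ye)-\tf^*(\xe)\le\tP_\mu(\xe,\ye)-\min_z\tP_\mu(\xe,z)=\cO(\varepsilon)$ follows from the potential-decrease property of Algorithm \ref{mmax-alg2} applied to $\max_z P_{\rho,\mu}(\cdot,\cdot,z)$ started at $(x^0,y^0)$, exactly as in the unconstrained case (cf. \eqref{unc-gap2} and Lemma \ref{t4}, using $f(\xe,\ye)\ge f_{\rm low}$), while the lower bound $\tf^*(\xe)-\tf(\xe,\ye)=\cO(\varepsilon^2)$ uses the conjugacy identity $\mu\|[v]_+\|^2=\max_{\lambda\ge0}\{\langle\lambda,v\rangle-\|\lambda\|^2/(4\mu)\}$, which by weak duality gives $\min_z\tP_\mu(\xe,z)\ge\tf^*(\xe)-\|\lambda^*(\xe)\|^2/(4\mu)$ with $\|\lambda^*(\xe)\|\le G^{-1}D_\bfy L_\tf$ bounded via Slater, combined with $\tf(\xe,\ye)=\tP_\mu(\xe,\ye)-\mu\|[\tg(\xe,\ye)]_+\|^2\ge\min_z\tP_\mu(\xe,z)-\mu\|[\tg(\xe,\ye)]_+\|^2$ and the $\cO(\varepsilon^2)$ bound just established.

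I expect the delicate point to be the $y$-block estimates \eqref{gap6}--\eqref{gap7}: unlike the $z$-block, the $y$-part of $P_{\rho,\mu}$ is genuinely nonconvex because of $f_1$, so the Slater/duality argument does not apply verbatim and must be run on the $\rho$-rescaled $y$-subproblem, with $\rho^{-1}\nabla_y f_1$ treated as a perturbation that is controllable only because $\rho$ is large. Carefully tracking how the errors $\varepsilon/\rho$ and $\rho^{-1}L_f$ propagate through the Slater bound, and then through the conversion between $\tP_\mu$ and $(\tf,\tg)$ needed for \eqref{gap5}, is where the bookkeeping is heaviest; the remaining pieces are either a direct invocation of Theorem \ref{mmax-thm} or elementary convex analysis.
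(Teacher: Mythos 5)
Your overall route is the paper's: a single invocation of Theorem \ref{mmax-thm} on \eqref{mmax} with $\rho=\varepsilon^{-1}$, $\mu=\varepsilon^{-2}$ and smoothness constant $\tL$, followed by Slater-based multiplier estimates converting the $\varepsilon$-stationarity of $(\xe,\ye,\ze)$ into \eqref{gap1}--\eqref{gap7}; this is exactly how the paper proceeds (its Lemma \ref{t4} packages the post-processing). Your $z$-block argument is the paper's, and your $y$-block treatment (dividing by $\rho$, absorbing $\rho^{-1}\nabla_y f_1$ as an $\rho^{-1}L_f$ perturbation, and running the Slater argument on the convex function $\tf(\xe,\cdot)+\langle 2\mu[\tg(\xe,\ye)]_+,\tg(\xe,\cdot)\rangle$) is a harmless variant of the paper's, which instead pairs the unrescaled stationarity vector with $\ye-\hat z_{\xe}$ and uses only $\|s\|\le L_f$, $\|\tilde s\|\le L_\tf$ and convexity of $\tg_i(\xe,\cdot)$; both give exactly \eqref{gap6}--\eqref{gap7}.

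The one genuine discrepancy is the lower bound in \eqref{gap5}: your conjugacy/weak-duality step gives $\tf^*(\xe)-\tf(\xe,\ye)\le\|\lambda^*\|^2/(4\mu)+\mu\|[\tg(\xe,\ye)]_+\|^2=\varepsilon^2G^{-2}D_\bfy^2\big(L_\tf^2+(\varepsilon^2+\varepsilon L_f+L_\tf)^2\big)/4$, which by AM--GM is \emph{at least} the stated bound $\varepsilon^2G^{-2}D_\bfy^2L_\tf(\varepsilon^2+\varepsilon L_f+L_\tf)/2$, so as written you only recover \eqref{gap5} up to a worse constant (same $\cO(\varepsilon^2)$ order). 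The paper instead evaluates the exact Lagrangian at $\ye$: with $\lambda^*$ an optimal multiplier of \eqref{tfstarx} at $x=\xe$ and $\|\lambda^*\|\le G^{-1}L_\tf D_\bfy$ (Lemma \ref{dual-bnd}(i)), one has $\tf^*(\xe)=\min_y\{\tf(\xe,y)+\langle\lambda^*,\tg(\xe,y)\rangle\}\le\tf(\xe,\ye)+\|\lambda^*\|\,\|[\tg(\xe,\ye)]_+\|$, which together with \eqref{gap6} yields the exact constant; swapping this in repairs your argument with no other changes. A smaller bookkeeping point: to get $\widetilde K$ and $\widetilde C$ from Theorem \ref{mmax-thm} you also need $\bH^*\le f^*+\rho(\tf_{\rm hi}-\tf_{\rm low})$ (evaluate the max at an optimal solution of \eqref{prob}) and $\bH_{\rm low}\ge f_{\rm low}+\rho(\tf_{\rm low}-\tf_{\rm hi})-\rho\mu\tg_{\rm hi}^2$, which come from \eqref{tfbnd} and \eqref{dg-bnd} rather than from Lemma \ref{dual-bnd}.
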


\begin{rem}
One can observe from Theorem~\ref{complexity} that $\tL=\cO(\varepsilon^{-3})$, $\tilde\alpha=\cO(\varepsilon^2)$, $\tilde\Cr=\cO(\varepsilon^{-5})$, $\widetilde C=\cO(\varepsilon^{-23})$, $\widetilde K=\cO(\varepsilon^{-5})$, and $\widetilde N=\cO(\varepsilon^{-7}\log\varepsilon^{-1})$. As a result, Algorithm~\ref{alg4} enjoys an operation complexity of $\cO(\varepsilon^{-7}\log \varepsilon^{-1})$, measured by the amount of evaluations of $\nabla f_1$, $\nabla\tf_1$, $\nabla \tg$ and proximal operator of $f_2$ and $\tf_2$, for finding an $\cO(\varepsilon)$-KKT solution $(\xe,\ye)$ of \eqref{prob} satisfying
\begin{align*}
&\dist\left(0,\partial f(\xe,\ye)+\rho\partial\tf(\xe,\ye) - \rho(\nabla_x\tf(\xe,\ze)+\nabla_x\tg(\xe,\ze)\tilde\lambda; 0)+\nabla \tg(\xe,\ye)\hat\lambda\right)\leq\varepsilon,\\
& \dist\left(0,\rho(\partial_z\tf(\xe,\ze)+\nabla_z\tg(\xe,\ze)\tlambda)\right)\leq\varepsilon, \\
& \|[\tg(\xe,\ze)]_+\|=\cO(\varepsilon^2), \quad |\langle \tlambda, \tg(\xe,\ze)\rangle|=\cO(\varepsilon^2), \\ 
&|\tf(\xe,\ye)-\tf^*(\xe)|=\cO(\varepsilon), \quad \|[\tg(\xe,\ye)]_+\|=\cO(\varepsilon^2), \quad |\langle\hat \lambda, \tg(\xe,\ye) \rangle|=\cO(\varepsilon),
\end{align*}
where $\tf^*$ is defined in \eqref{tfstarx}, $\hat\lambda,\tlambda\in\bR_+^l$ are defined in \eqref{def-lambda}, $\ze$ is given in Algorithm \ref{alg4} and $\rho = \varepsilon^{-1}$.
\end{rem}

\section{Numerical results}\label{sec:exp}
In this section we conduct some preliminary experiments to test the performance of our proposed methods (Algorithms \ref{alg2} and \ref{alg4}) with dynamic update on penalty and tolerance parameters. All the algorithms are coded in Matlab and all the computations are performed on a desktop with a 3.60 GHz Intel i7-12700K 12-core processor and 32 GB of RAM.

\subsection{Unconstrained bilevel optimization with linear upper level and  quadratic lower level}
In this subsection, we consider unconstrained bilevel optimization with linear upper level and  quadratic lower level in the form of 
\begin{equation}\label{unc-linear}
\begin{array}{rl}
\min & c^Tx+d^Ty+\cI_{[-1,1]^n}(x)\\ [4pt]
\mbox{s.t.}& y\in\argmin\limits_{z}x^T\widetilde Az + z^T\widetilde Bz+\tilde d^Tz+\cI_{[-1,1]^m}(z), 
\end{array}
\end{equation}
where $\widetilde A\in\bR^{n\times m}$, $\widetilde B\in\bR^{m\times m}$, $c\in\bR^n$, $d, \tilde d\in\bR^m$, and $\cI_{[-1,1]^n}(\cdot)$ and $\cI_{[-1,1]^m}(\cdot)$ are the indicator functions of $[-1,1]^n$ and $[-1,1]^m$ respectively.\footnote{The notation $[-1,1]^n$ denotes the set $\{x\in\bR^n|x_i\in[-1,1],\ i=1,\ldots,n.\}$.}

For each pair $(n,m)$, we randomly generate $10$ instances of problem \eqref{unc-linear}. Specifically, we first randomly generate $c$, $d$ with all the entries independently chosen from the standard normal distribution, and $\widetilde A$ with all the entries independently chosen from a normal distribution with mean 0 and standard deviation $0.01$. We then randomly generate an orthogonal matrix $U$ by performing $U=\mathrm{orth}(\mathrm{randn}(m))$,  an $m\times m$ diagonal matrix $D$ with its diagonal entries independently chosen from a normal distribution with mean 0 and standard deviation $0.01$ and then projected to $\bR_+$, and set $\widetilde B=UDU^T$. In addition, we randomly generate $\hat y\in[-1,1]^m$ with all the entries independently chosen from a normal distribution with mean $0$ and standard deviation $0.1$ and then projected to $[-1,1]^m$, and choose $\tilde d$ such that $\hat y$ is an optimal solution for the lower level optimization of \eqref{unc-linear} with $x=0$.

Notice that \eqref{unc-linear} is a special case of \eqref{unc-prob} with $f(x,y)=c^Tx+d^Ty+\cI_{[-1,1]^n}(x)$ and $\tf(x,z)=x^T\widetilde Az + z^T\widetilde Bz+\tilde d^Tz+\cI_{[-1,1]^m}(z)$ and can be suitably solved by Algorithm \ref{alg2}. For the sake of efficiency, we implement a variant of Algorithm \ref{alg2} with dynamic update on penalty and tolerance parameters. Specifically, we set $\rho_k=5^{k-1}$, $\varepsilon_k=\rho_k^{-1}$ and 
$x_{\varepsilon_{-1}}=0$.  For each $k \geq 0$, we run Algorithm \ref{alg2} with $(\varepsilon, \rho)=(\varepsilon_k, \rho_k)$ and $(x_{\varepsilon_{k-1}},\tilde y_{\varepsilon_{k-1}})$ as the initial point to generate $(x_{\varepsilon_k},y_{\varepsilon_k})$, where 
$\tilde y_{\varepsilon_{k-1}}\in\argmin_z\tf(x_{\varepsilon_{k-1}},z)$ is found by CVX \cite{grant2014cvx}.
We terminate the process once $\varepsilon_{\bk} \leq 10^{-4}$ and 
$(x_{\varepsilon_{\bk}},y_{\varepsilon_{\bk}})$ satisfies
\begin{equation*}
\tf(x_{\varepsilon_{\bk}},y_{\varepsilon_{\bk}})-\min_z\tf(x_{\varepsilon_{\bk}},z)\leq10^{-4}
\end{equation*}
for some $\bk$ and output $(x_{\varepsilon_{\bk}},y_{\varepsilon_{\bk}})$ as an approximate solution of \eqref{unc-linear}, where the value of $\min_z\tf(x_{\varepsilon_{\bk}},z)$ is computed by CVX.

The computational results of the aforementioned variant of Algorithm \ref{alg2} for the instances randomly generated above are presented in Table \ref{t-unc-linear}. In detail, the values of $n$ and $m$ are listed in the first two columns. For each pair $(n, m)$, the average initial objective value $f(x_{\varepsilon_{-1}},\tilde y_{\varepsilon_{-1}})$ and the average final objective value 
$f(x_{\varepsilon_{\bk}},y_{\varepsilon_{\bk}})$  over $10$ random instances are given in the rest of columns. One can observe that the approximate solution 
$(x_{\varepsilon_{\bk}},y_{\varepsilon_{\bk}})$  found by this method significantly reduces objective function value compared to the initial point 
$(x_{\varepsilon_{-1}},\tilde y_{\varepsilon_{-1}})$. 
\begin{table}[H]
\centering
\begin{tabular}{cc||cc}
\hline
$n$&$m$&Initial objective value&Final objective value\\\hline
100&100&-0.35&-101.67\\
200&200&-0.53&-194.91\\
300&300&-0.48&-307.43\\
400&400&-0.44&-401.71\\
500&500&-0.05&-527.45\\
600&600&0.99&-644.53\\
700&700&0.49&-759.54\\
800&800&-1.23&-872.77\\
900&900&-2.07&-1004.27\\
1000&1000&-1.06&-1107.61\\\hline
\end{tabular}
\caption{Numerical results for problem \eqref{unc-linear}}
\label{t-unc-linear}
\end{table}

\subsection{Constrained bilevel linear optimization}
In this subsection, we consider constrained bilevel linear optimization in the form of
\begin{equation}\label{linear}
\begin{array}{rl}
\min & c^Tx+d^Ty+\cI_{[-1,1]^n}(x)\\ [4pt]
\mbox{s.t.}& y\in\argmin\limits_{z}\left\{\tilde d^Tz+\cI_{[-1,1]^m}(z)\big|\widetilde Ax+\widetilde Bz-\tilde b\leq0\right\}, 
\end{array}
\end{equation}
where $c\in\bR^n$, $d, \tilde d\in\bR^m$,  $\tilde b\in\bR^l$, $\widetilde A\in\bR^{l\times n}$, $\widetilde B\in\bR^{l\times m}$, and $\cI_{[-1,1]^n}(\cdot)$ and $\cI_{[-1,1]^m}(\cdot)$ are the indicator functions of $[-1,1]^n$ and $[-1,1]^m$ respectively.

For each triple $(n,m,l)$, we randomly generate $10$ instances of problem \eqref{linear}. Specifically,  we first randomly generate $c$ and $d$ with all the entries independently chosen from the standard normal distribution. We then randomly generate $\widetilde A$ and $\widetilde B$ with all the entries independently chosen from a normal distribution with mean $0$ and standard deviation $0.01$. In addition, we randomly generate $\hat y\in[-1,1]^m$ with all the entries independently chosen from a normal distribution with mean $0$ and standard deviation $0.1$ and then projected to $[-1,1]^m$ and choose $\tilde d$ and $\tilde b$ such that  $\hat y$ is an optimal solution of the lower level optimization of \eqref{linear} with $x=0$.

Notice that \eqref{linear} is a special case of \eqref{prob} with $f(x,y)= c^Tx+d^Ty+\cI_{[-1,1]^n}(x)$, $\tf(x,z)=\tilde d^Tz+\cI_{[-1,1]^m}(z)$ and $\tg(x,z)=\widetilde Ax+\widetilde Bz-\tilde b$ 
and  can be suitably solved by Algorithm \ref{alg4}. 
For the sake of efficiency, we implement a variant of Algorithm \ref{alg4} with dynamic update on penalty and tolerance parameters. Specifically, we set $\rho_k=5^{k-1}$, $\mu_k=\rho_k^2$, $\varepsilon_k=\rho_k^{-1}$ and $x_{\varepsilon_{-1}}=0$. For each $k \geq 0$, we run Algorithm \ref{alg4} with $(\varepsilon, \rho, \mu)=(\varepsilon_k, \rho_k, \mu_k)$ and $(x_{\varepsilon_{k-1}},\tilde y_{\varepsilon_{k-1}})$ as the initial point to generate $(x_{\varepsilon_k},y_{\varepsilon_k})$, where $\tilde y_{\varepsilon_{k-1}}$ satisfies $\tP_{\mu_k}(x_{\varepsilon_{k-1}},\tilde y_{\varepsilon_{k-1}})\leq\min_{z}\tP_{\mu_k}(x_{\varepsilon_{k-1}},z)+\varepsilon_k$ with $\tP_{\mu_k}$ being given in \eqref{def-tFmu}, which can be found by the accelerated proximal gradient method \cite{Nest13}. We terminate the process once $\varepsilon_{\bk} \leq 10^{-4}$ and 
$(x_{\varepsilon_{\bk}},y_{\varepsilon_{\bk}})$ satisfies
\begin{equation*}
\|[\tg(x_{\varepsilon_{\bk}},y_{\varepsilon_{\bk}})]_+\|\leq10^{-4},\quad \tf(x_{\varepsilon_{\bk}},y_{\varepsilon_{\bk}})-\tf^*(x_{\varepsilon_{\bk}})\leq10^{-4}
\end{equation*}
for some $\bk$ and output $(x_{\varepsilon_{\bk}},y_{\varepsilon_{\bk}})$ as an approximate solution of \eqref{linear}, where $\tf^*$ is defined in \eqref{tfstarx} and the value $\tf^*(x_{\varepsilon_{\bk}})$ is computed by CVX \cite{grant2014cvx}.

The computational results of the aforementioned variant of Algorithm \ref{alg4} for the instances randomly generated above are presented in Table \ref{t-linear}. In detail, the values of $n$, $m$ and $l$ are listed in the first three columns. For each triple $(n, m, l)$, the average initial objective value $f(x_{\varepsilon_{-1}},\hat y)$ with $\hat y$ being generated above\footnote{Note that $(x_{\varepsilon_{-1}},\tilde y_{\varepsilon_{-1}})$ may not be a feasible point of \eqref{linear}. Nevertheless, $(x_{\varepsilon_{-1}},\hat y)$ is a feasible point of \eqref{linear} due to $x_{\varepsilon_{-1}}=0$ and the particular way for generating instances of \eqref{linear}. Besides, \eqref{linear} can be viewed as an implicit optimization problem in terms of the variable $x$. It is thus reasonable to use $f(x_{\varepsilon_{-1}},\hat y)$ as the initial objective value for the purpose of comparison.} and the average final objective value $f(x_{\varepsilon_{\bk}},y_{\varepsilon_{\bk}})$ over $10$ random instances are given in the rest of the columns. One can observe that the approximate solution 
$(x_{\varepsilon_{\bk}},y_{\varepsilon_{\bk}})$  found by this method significantly reduces objective function value compared to the initial point 
$(x_{\varepsilon_{-1}},\hat y)$.

\begin{table}[H]
\centering
\begin{tabular}{ccc||cc}
\hline
$n$ & $m$&$l$ &Initial objective value&Final objective value\\\hline
100&100&5&-0.51&-34.83\\
200&200&10&-0.15&-121.41\\
300&300&15&1.56&-208.44\\
400&400&20&-0.04&-298.25\\
500&500&25&1.45&-384.77\\
600&600&30&0.75&-470.31\\
700&700&35&0.09&-568.26\\
800&800&40&-0.98&-629.61\\
900&900&45&1.21&-689.00\\
1000&1000&50&1.44&-781.79\\\hline
\end{tabular}
\caption{Numerical results for problem \eqref{linear}}
\label{t-linear}
\end{table}

\section{Proof of the main results}\label{sec:proof}
In this section we provide a proof of our main results presented in Sections \ref{unconstr-BLO} and \ref{constr-BLO}, which are particularly Theorems \ref{c1}-\ref{complexity}.

\subsection{Proof of the main results in Section~\ref{unconstr-BLO}}\label{sec:proof3}
In this subsection we prove Theorems \ref{c1}, \ref{hypergrad-thm} and \ref{unc-complexity}. We first establish a lemma below, which will be used to prove Theorem \ref{c1} subsequently.

\begin{lemma}\label{t1}
Suppose that Assumption \ref{a1} holds and $(\xe, \ye, \ze)$ is an $\epsilon$-optimal solution of problem \eqref{unc-mmax} for some $\epsilon>0$. Let $f$, $\tf$, $f^*$, $f_{\rm low}$ and $\rho$ be given in \eqref{unc-prob}, \eqref{lower-bnd} and \eqref{unc-mmax}, respectively. Then we have
\begin{equation*}
\tf(\xe,\ye)\leq\min_{z}\tf(\xe,z)+\rho^{-1}(f^*-f_{\rm low}+2\epsilon), \quad f(\xe,\ye)\leq f^*+2\epsilon.
\end{equation*}
\end{lemma}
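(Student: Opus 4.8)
The plan is to exploit the definition of an $\epsilon$-optimal solution of the minimax problem \eqref{unc-mmax} together with the particular structure $P_\rho(x,y,z) = f(x,y) + \rho(\tf(x,y) - \tf(x,z))$ and the fact that the minimax value $P_\rho^*$ is sandwiched between $f_{\rm low}$ and $f^*$. First I would unwind Definition~\ref{def1}: writing $P_\rho^* = \min_{x,y}\max_z P_\rho(x,y,z)$, the two defining inequalities give
\[
\max_z P_\rho(\xe,\ye,z) - P_\rho(\xe,\ye,\ze) \leq \epsilon, \qquad P_\rho(\xe,\ye,\ze) - P_\rho^* \leq \epsilon.
\]
Since $\max_z P_\rho(\xe,\ye,z) = f(\xe,\ye) + \rho(\tf(\xe,\ye) - \min_z \tf(\xe,z))$ and $P_\rho(\xe,\ye,\ze) = f(\xe,\ye) + \rho(\tf(\xe,\ye) - \tf(\xe,\ze))$, subtracting these two expressions converts the first inequality into $\rho(\min_z\tf(\xe,z) \text{ term})$ comparisons; more usefully, I combine it with the second.

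Next I would bound $P_\rho^*$. On one hand, taking any optimal solution $(x^*,y^*)$ of \eqref{unc-prob}, which satisfies $\tf(x^*,y^*) = \min_z \tf(x^*,z)$, we get $\max_z P_\rho(x^*,y^*,z) = f(x^*,y^*) = f^*$, hence $P_\rho^* \leq f^*$. On the other hand, for any $(x,y,z)$, $P_\rho(x,y,z) \geq f(x,y) + \rho(\tf(x,y) - \max_z \tf(x,z))$; this is not immediately bounded below by $f_{\rm low}$ because $\tf(x,y) - \max_z\tf(x,z)$ can be negative. The cleaner route: for the $\max$ over $z$ we have $\max_z P_\rho(x,y,z) = f(x,y) + \rho(\tf(x,y) - \min_z\tf(x,z)) \geq f(x,y) \geq f_{\rm low}$, since $\tf(x,y) - \min_z \tf(x,z) \geq 0$; therefore $P_\rho^* = \min_{x,y}\max_z P_\rho(x,y,z) \geq f_{\rm low}$. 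Combining, $f_{\rm low} \leq P_\rho^* \leq f^*$.

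Now I assemble the pieces. From $P_\rho(\xe,\ye,\ze) \leq P_\rho^* + \epsilon \leq f^* + \epsilon$ and $\max_z P_\rho(\xe,\ye,z) \leq P_\rho(\xe,\ye,\ze) + \epsilon$, I get $f(\xe,\ye) + \rho(\tf(\xe,\ye) - \min_z\tf(\xe,z)) = \max_z P_\rho(\xe,\ye,z) \leq f^* + 2\epsilon$. Since $f(\xe,\ye) \geq f_{\rm low}$, rearranging yields $\rho(\tf(\xe,\ye) - \min_z\tf(\xe,z)) \leq f^* - f_{\rm low} + 2\epsilon$, i.e. the first claimed inequality after dividing by $\rho$. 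For the second claim, since $\tf(\xe,\ye) - \min_z\tf(\xe,z) \geq 0$, the same inequality $f(\xe,\ye) + \rho(\tf(\xe,\ye) - \min_z\tf(\xe,z)) \leq f^* + 2\epsilon$ immediately gives $f(\xe,\ye) \leq f^* + 2\epsilon$.

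The argument is essentially routine; the only point requiring a little care is the lower bound $P_\rho^* \geq f_{\rm low}$, where one must take the $\max$ over $z$ first (so the nonnegative penalty term $\tf(x,y) - \min_z\tf(x,z)$ appears) rather than evaluating $P_\rho$ at an arbitrary $z$. I do not anticipate a genuine obstacle here; it is a short bridging lemma whose role is to feed into the proof of Theorem~\ref{c1}.
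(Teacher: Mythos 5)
Your proposal is correct and follows essentially the same route as the paper: sum the two $\epsilon$-optimality inequalities, bound $\min_{x,y}\max_z P_\rho$ above by $f^*$ via an optimal solution $(x^*,y^*)$ of \eqref{unc-prob}, and then use $\tf(\xe,\ye)-\min_z\tf(\xe,z)\geq 0$ together with $f(\xe,\ye)\geq f_{\rm low}$ to extract both claims. The extra detour establishing $P_\rho^*\geq f_{\rm low}$ is harmless but not needed, since (as in the paper) the bound $f(\xe,\ye)\geq f_{\rm low}$ suffices.
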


\begin{proof}
Since $(\xe,\ye,\ze)$ is an $\epsilon$-optimal solution of \eqref{unc-mmax}, it follows from Definition \ref{def1} that 
\[
\max_{z}P_\rho(\xe,\ye,z)\leq P_\rho(\xe,\ye,\ze)+\epsilon, \quad P_\rho(\xe,\ye,\ze)\leq \min_{x,y}\max_{z}P_\rho(x,y,z)+\epsilon.
\]
Summing up these inequalities yields
\begin{equation}\label{unc-pgap}
\max_{z}P_\rho(\xe,\ye,z)\leq\min_{x,y}\max_{z}P_\rho(x,y,z)+2\epsilon.
\end{equation}
Let $(x^*,y^*)$ be an optimal solution of \eqref{unc-prob}. It then follows that $f(x^*,y^*)=f^*$ and 
$\tf(x^*,y^*)=\min_{z}\tf(x^*,z)$. By these and the definition of $P_\rho$ in \eqref{unc-mmax}, one has
\begin{equation*}
\max_{z}P_\rho(x^*,y^*,z)= f(x^*,y^*)+\rho(\tf(x^*,y^*)-\min_{z}\tf(x^*,z))=f(x^*,y^*)=f^*,
\end{equation*}
which implies that
\begin{equation}\label{unc-ineq}
\min_{x,y}\max_{z}P_\rho(x,y,z)\leq\max_{z}P_\rho(x^*,y^*,z)=f^*.
\end{equation}
It then follows from \eqref{unc-mmax}, \eqref{unc-pgap} and \eqref{unc-ineq} that
\begin{equation*}
f(\xe,\ye)+\rho(\tf(\xe,\ye)-\min_{z}\tf(\xe,z))\overset{\eqref{unc-mmax}}{=}\max_{z}P_\rho(\xe,\ye,z)\overset{\eqref{unc-pgap} \eqref{unc-ineq}}{\leq} f^*+2\epsilon,
\end{equation*}
which together with $\tf(\xe,\ye)-\min_{z}\tf(\xe,z)\geq 0$ implies that
\begin{equation*}
f(\xe,\ye)\leq f^*+2\epsilon,\quad\tf(\xe,\ye)\leq\min_{z}\tf(\xe,z)+\rho^{-1}\left(f^*-f(\xe,\ye)+2\epsilon\right).
\end{equation*}
The conclusion of this lemma directly follows from these and \eqref{lower-bnd}.
\end{proof}

We are now ready to prove Theorem~\ref{c1}.

\begin{proof}[\textbf{Proof of Theorem~\ref{c1}}]
Let $\{(x^k,y^k,z^k)\}$ be generated by Algorithm \ref{alg1} with $\lim_{k\to\infty}(\rho_k,\epsilon_k)=(\infty,0)$.  By considering a convergent subsequence if necessary, we assume without loss of generality that $\lim_{k\to\infty} (x^k,y^k)=(x^*,y^*)$. We now show that $(x^*,y^*)$ is an optimal solution of problem \eqref{unc-prob}. Indeed, since $(x^k,y^k,z^k)$ is an $\epsilon_k$-optimal solution of \eqref{unc-mmax} with $\rho=\rho_k$, it follows from Lemma \ref{t1} with $(\rho,\epsilon)=(\rho_k,\epsilon_k)$ and $(\xe,\ye)=(x^k,y^k)$ that 
 \begin{equation*}
\tf(x^k,y^k)\leq\min_{z}\tf(x^k,z)+\rho_k^{-1}(f^*-f_{\rm low}+2\epsilon_k),\quad f(x^k,y^k)\leq f^*+2\epsilon_k.
\end{equation*}
By the continuity of $f$ and $\tf$, $\lim_{k\to\infty} (x^k,y^k)=(x^*,y^*)$, $\lim_{k\to\infty}(\rho_k,\epsilon_k)=(\infty,0)$, and taking limits as $k\to\infty$ on both sides of the above relations, we obtain that $\tf(x^*,y^*)\leq\min_{z}\tf(x^*,z)$ and $f(x^*,y^*)\leq f^*$, which clearly imply that $y^*\in\argmin_z\tf(x^*,z)$ and $f(x^*,y^*)=f^*$. Hence, $(x^*,y^*)$ is an optimal solution of \eqref{unc-prob} as desired.
\end{proof}

We next prove Theorem~\ref{hypergrad-thm}. 
\begin{proof}[\textbf{Proof of Theorem~\ref{hypergrad-thm}}]
Since $(x,y)$ is an $\varepsilon$-KKT solution of problem \eqref{unc-prob} with its associated $\rho\geq \rho_0$, it follows from Definition 3 that there exists $z\in\bR^m$ such that
\begin{align}
&\|\nabla_x f(x,y)+\rho\nabla_x\tf(x,y)-\rho\nabla_x\tf(x,z)\|\leq\varepsilon,\label{e1}\\
&\|\nabla_y f(x,y)+\rho\nabla_y\tf(x,y)\|\leq\varepsilon,\label{e2}\\
&\rho\|\nabla_y\tf(x,z)\|\leq\varepsilon,\quad\tf(x,y)-\min_{z'}\tf(x,z')\leq\varepsilon.\label{e3}
\end{align}

Using \eqref{e1}, the triangle inequality,  and the assumptions that $x\in\Omega$, $\nabla f(x',\cdot)$ is $L_1$-Lipschitz continuous  and  $\nabla^2 \tf(x',\cdot)$ is $L_2$-Lipschitz continuous for all $x'\in\Omega$, we have
\begin{align}
&\|\nabla_x f(x,y^*(x))+\rho\nabla_{xy}^2\tf(x,y^*(x))(y-z)\|\nn\\
&\leq \|\nabla_x f(x,y)+\rho\nabla_x\tf(x,y)-\rho\nabla_x\tf(x,z)\|+\|\nabla_x f(x,y^*(x))-\nabla_xf(x,y)\|\nn\\
&\quad+\rho\|\nabla_x\tf(x,y^*(x))+\nabla_{xy}^2\tf(x,y^*(x))(y-y^*(x))-\nabla_x\tf(x,y)\|\nn\\
&\quad+\rho\|\nabla_x\tf(x,z)-\nabla_x\tf(x,y^*(x))-\nabla_{xy}^2\tf(x,y^*(x))(z-y^*(x))\|\nn\\
&\leq\varepsilon+L_1\|y-y^*(x)\|+\frac{\rho L_2}{2}\|y-y^*(x)\|^2+\frac{\rho L_2}{2}\|z-y^*(x)\|^2.\label{e4}
\end{align}
By \eqref{e2}, \eqref{e3} and a similar argument as for deriving \eqref{e4}, one has
\begin{align*}
&\|\nabla_yf(x,y^*(x))+\rho\nabla_{yy}^2\tf(x,y^*(x))(y-z)\|\nn\\
&\leq\|\nabla_y f(x,y^*(x))-\nabla_yf(x,y)\|+\|\nabla_y f(x,y)+\rho\nabla_y\tf(x,y)\|+\rho\|\nabla_y\tf(x,z)\|\nn\\
&\quad+\rho\|\nabla_y\tf(x,y^*(x))+\nabla_{yy}^2\tf(x,y^*(x))(y-y^*(x))-\nabla_y\tf(x,y)\|\nn\\
&\quad+\rho\|\nabla_y\tf(x,z)-\nabla_y\tf(x,y^*(x))-\nabla_{yy}^2\tf(x,y^*(x))(z-y^*(x))\|\nn\\
&\leq L_1\|y-y^*(x)\|+2\varepsilon+\frac{\rho L_2}{2}\|y-y^*(x)\|^2+\frac{\rho L_2}{2}\|z-y^*(x)\|^2. 
\end{align*}
Using this inequality,  \eqref{hypergrad}, \eqref{C} and \eqref{e4}, we obtain that 
\begin{align}
&\|\nabla \Phi(x)\|=\|\nabla_x f(x,y^*(x))-\nabla_{xy}^2\tf(x,y^*(x))[\nabla_{yy}^2\tf(x,y^*(x))]^{-1}\nabla_yf(x,y^*(x))\|\nn\\
&=\|\nabla_x f(x,y^*(x))+\rho\nabla_{xy}^2\tf(x,y^*(x))(y-z)\nn\\
&\ \ \ -\nabla_{xy}^2\tf(x,y^*(x))[\nabla_{yy}^2\tf(x,y^*(x))]^{-1}[\nabla_y f(x,y^*(x))+\rho\nabla_{yy}^2\tf(x,y^*(x))(y-z)]\|\nn\\
&\leq\|\nabla_x f(x,y^*(x))+\rho\nabla_{xy}^2\tf(x,y^*(x))(y-z)\|\nn\\
&\ \ \ +\|\nabla_{xy}^2\tf(x,y^*(x))[\nabla_{yy}^2\tf(x,y^*(x))]^{-1}\|\cdot\|\nabla_y f(x,y^*(x))+\rho\nabla_{yy}^2\tf(x,y^*(x))(y-z)\|\nn\\
&\leq(2C+1)\varepsilon+(C+1)\Big(L_1\|y-y^*(x)\|+\frac{\rho L_2}{2}\|y-y^*(x)\|^2+\frac{\rho L_2}{2}\|z-y^*(x)\|^2\Big).\label{e6}
\end{align}

Recall from the assumption that $x\in\Omega \subset \calN$ and $\tf(x',\cdot)$ is strongly convex with modulus $\sigma>0$ for all $x'\in\calN$. It follows from these,  \eqref{e3} and the definition of $y^*(x)$ in \eqref{Phi} that
\begin{equation}\label{y-dist}
\|y-y^*(x)\|^2\leq 2\sigma^{-1}\big(\tf(x,y)-\min_{z'}\tf(x,z')\big)\leq2\sigma^{-1}\varepsilon,
\end{equation}
which together with $x\in\Omega$ and \eqref{def-Mf} implies that $\|\nabla_y f(x,y)\|\leq \bar C$. Using this,  \eqref{e2}, \eqref{e3}, $x\in\Omega \subset \calN$, $\nabla_y\tf(x,y^*(x))=0$, and the assumption that $\tf(x',\cdot)$ is strongly convex with modulus $\sigma>0$ for all $x'\in\calN$, we have
\begin{align}
\|y-y^*(x)\|\leq\ &\sigma^{-1}\|\nabla_y\tf(x,y)-\nabla_y\tf(x,y^*(x))\|=\sigma^{-1}\|\nabla_y\tf(x,y)\|\nn\\
\leq\ &(\rho\sigma)^{-1}(\|\nabla_y f(x,y)+\rho\nabla_y\tf(x,y)\|+\|\nabla_y f(x,y)\|)\leq(\rho\sigma)^{-1}(\varepsilon+ \bar C), \label{e7} \\
\|z-y^*(x)\|\leq\ &\sigma^{-1}\|\nabla_y\tf(x,z)-\nabla_y\tf(x,y^*(x))\|=\sigma^{-1}\|\nabla_y\tf(x,z)\|\leq(\rho\sigma)^{-1}\varepsilon. \label{z-dist}
\end{align}
It then follows from \eqref{y-dist}, \eqref{e7} and the definition of $\theta$ in \eqref{C} that $\|y-y^*(x)\| \leq \theta$. By this, \eqref{e6} and \eqref{z-dist}, one can conclude that \eqref{hypergrad-bnd1} holds. In addition,  
in view of \eqref{C}, one has $\theta \leq \sqrt{2\sigma^{-1}\varepsilon}$ and 
\[
\rho\theta^2=\min\big\{\rho^{-1}\sigma^{-2}(\varepsilon+\bar C)^2,\ 2\rho\sigma^{-1}\varepsilon\big\} \leq  \min\big\{\rho^{-1}\sigma^{-2}(\varepsilon_0+\bar C)^2,\ 2\rho\sigma^{-1}\varepsilon\big\} \leq \sqrt{2}\sigma^{-3/2}(\varepsilon_0+\bar C)\sqrt{\varepsilon}.
\]
Using these inequalities,  \eqref{hypergrad-bnd1}, $\varepsilon \leq \varepsilon_0$ and $\rho \geq \rho_0$, we see that 
\eqref{hypergrad-bnd2} holds.
\end{proof}

We next prove Theorem~\ref{unc-complexity}. Before proceeding, we establish a lemma below, which will be used to prove Theorem~\ref{unc-complexity} subsequently.

\begin{lemma}\label{t2}
Suppose that Assumption \ref{a1} holds and $(\xe,\ye,\ze)$ is an $\epsilon$-stationary point of \eqref{unc-mmax}. Let $D_\bfy$, $f_{\rm low}$, $\tf$, $\rho$, and $P_\rho$ be given in  \eqref{DxDy}, \eqref{lower-bnd} and \eqref{unc-mmax}, respectively. Then we have
\begin{align*}
&\dist\Big(0,\partial f(\xe,\ye)+\rho\partial\tf(\xe,\ye)-(\rho\nabla_x\tf(\xe,\ze);0)\Big)\leq\epsilon,\quad\dist\big(0,\rho\partial_z\tf(\xe,\ze)\big)\leq\epsilon,\\
&\tf(\xe,\ye)\leq\min_{z}\tf(\xe,z)+\rho^{-1}(\max_zP_\rho(\xe,\ye,z)-f_{\rm low}).
\end{align*}
\end{lemma}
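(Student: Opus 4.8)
The plan is to exploit the additively-separable structure of $P_\rho$ in \eqref{P-rho} and the definition of an $\epsilon$-stationary point (Definition~\ref{def2}) applied to the minimax problem \eqref{unc-mmax}, together with a concavity/boundedness argument on the $z$-block to control the constraint violation $\tf(\xe,\ye)-\min_z\tf(\xe,z)$.

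First I would translate the hypothesis. Since $(\xe,\ye,\ze)$ is an $\epsilon$-stationary point of \eqref{unc-mmax}, Definition~\ref{def2} gives $\dist(0,\partial_{(x,y)}P_\rho(\xe,\ye,\ze))\le\epsilon$ and $\dist(0,\partial_z P_\rho(\xe,\ye,\ze))\le\epsilon$. Using the decomposition \eqref{P-rho} and the calculus rule $\partial(\phi_1+\phi_2)=\nabla\phi_1+\partial\phi_2$ when $\phi_1$ is $C^1$ (stated in Subsection~\ref{notation}), I would compute $\partial_{(x,y)}P_\rho(\xe,\ye,\ze)=\partial f(\xe,\ye)+\rho\partial\tf(\xe,\ye)-(\rho\nabla_x\tf(\xe,\ze);0)$ and $\partial_z P_\rho(\xe,\ye,\ze)=\rho\partial_z\tf(\xe,\ze)$ — here I use that $\tf(x,\cdot)$ is convex and $f_2,\tf_2$ are closed convex, so the subdifferentials in question are the ordinary convex (or Clarke) ones, and the $z$-gradient of the smooth part $-\rho\tf_1(x,z)$ matches. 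This immediately yields the first two asserted inequalities.

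Next, for the third inequality I would use concavity of $P_\rho$ in $z$ (noted right after \eqref{P-rho}): the stationarity condition $\dist(0,\rho\partial_z\tf(\xe,\ze))\le\epsilon$, hmm, actually more directly — since $P_\rho(\xe,\ye,\cdot)$ is concave, any $z$ with $0\in\partial_z P_\rho(\xe,\ye,z)$ would be a global maximizer; here we only have an $\epsilon$-approximate stationarity, so I would instead argue crudely: $\tf(\xe,\ye)-\min_z\tf(\xe,z)=\rho^{-1}\big(\max_z P_\rho(\xe,\ye,z)-f(\xe,\ye)\big)$ by the definitions in \eqref{exact-penalty}–\eqref{unc-mmax}, and then bound $f(\xe,\ye)\ge f_{\rm low}$ via \eqref{lower-bnd}. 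This gives $\tf(\xe,\ye)\le\min_z\tf(\xe,z)+\rho^{-1}(\max_z P_\rho(\xe,\ye,z)-f_{\rm low})$ directly, with no need for the stationarity information at all on this line — just the identity $\max_z P_\rho(\xe,\ye,z)=f(\xe,\ye)+\rho(\tf(\xe,\ye)-\min_z\tf(\xe,z))$.

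The main obstacle, and the only place requiring care, is justifying the subdifferential calculus for $P_\rho$ — specifically that $\partial_{(x,y)}$ of the sum splits as claimed and that the partial subdifferentials of $P_\rho$ coincide with the partial subdifferentials appearing in the statement (rather than some larger set). This is where Assumption~\ref{a1}(i) (separability, convexity of $\tf_2$ and of $\tf_1(x,\cdot)$, smoothness of $f_1,\tf_1$) and the sum rule from Subsection~\ref{notation} do the work; the rest is bookkeeping. I would also note that $\max_z P_\rho(\xe,\ye,z)$ is finite since $\mcY$ is compact (Assumption~\ref{a1}(iii)) and $\tf$ is continuous, so the final inequality is meaningful.
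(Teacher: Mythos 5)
Your proposal is correct and follows essentially the same route as the paper's proof: read off the two distance bounds from Definition~\ref{def2} applied to $\partial_{x,y}P_\rho$ and $\partial_z P_\rho$ using the separable structure \eqref{P-rho}, and obtain the third inequality from the identity $\max_z P_\rho(\xe,\ye,z)=f(\xe,\ye)+\rho(\tf(\xe,\ye)-\min_z\tf(\xe,z))$ together with $f(\xe,\ye)\geq f_{\rm low}$. Your observation that the stationarity hypothesis plays no role in the last inequality, and your explicit justification of the subdifferential splitting (which the paper leaves implicit), are both consistent with the paper's argument.
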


\begin{proof}
Since $(\xe,\ye,\ze)$ is an $\epsilon$-stationary point of  \eqref{unc-mmax}, it follows from Definition \ref{def2} that  
\begin{equation*}
\dist\big(0,\partial_{x,y} P_\rho(\xe,\ye,\ze)\big)\leq\epsilon,\quad\dist\big(0,\partial_{z} P_\rho(\xe,\ye,\ze)\big)\leq\epsilon.
\end{equation*}
Using these and the definition of $P_\rho$ in \eqref{unc-mmax}, we have
\[
\dist\Big(0,\partial f(\xe,\ye)+\rho\partial\tf(\xe,\ye)-(\rho\nabla_x\tf(\xe,\ze);0)\Big)\leq\epsilon,\quad\dist\big(0,\rho\partial_z\tf(\xe,\ze)\big)\leq\varepsilon.
\]
In addition, by \eqref{unc-mmax}, we have
\begin{equation*}
f(\xe,\ye)+\rho(\tf(\xe,\ye)-\min_{z}\tf(\xe,z))=\max_{z} P_\rho(\xe,\ye,z),
\end{equation*}
which along with \eqref{lower-bnd} implies that
\begin{equation*}
\tf(\xe,\ye)-\min_{z}\tf(\xe,z)\leq\rho^{-1}(\max_zP_\rho(\xe,\ye,z)-f_{\rm low}).
\end{equation*}
This completes the proof of this lemma.
\end{proof}

We are now ready to prove Theorem~\ref{unc-complexity}.

\begin{proof}[\textbf{Proof of Theorem~\ref{unc-complexity}}]
Observe from \eqref{P-rho} that problem \eqref{unc-mmax} can be viewed as 
\[
\min_{x,y}\max_{z} \left\{P_\rho(x,y,z)=\bh(x,y,z)+p(x,y)-q(z)\right\},
\]
where $\bh(x,y,z) = f_1(x,y)+\rho\tf_1(x,y)-\rho\tf_1(x,z)$, $p(x,y)=f_2(x)+\rho \tf_2(y)$, and $q(z)=\rho\tf_2(z)$. Hence, problem \eqref{unc-mmax} is in the form of \eqref{ap-prob} with $\bH=P_\rho$. By Assumption \ref{a1} and $\rho=\varepsilon^{-1}$, one can see that $\bh$ is $\hL$-smooth on its domain, where $\hL$ is given in \eqref{hL}. Also, notice from Algorithm \ref{alg2} that $\epsilon_0=\varepsilon^{3/2} \leq \varepsilon/2$ due to $\varepsilon\in(0,1/4]$. Consequently, Algorithm~\ref{mmax-alg2} can be suitably applied to problem \eqref{unc-mmax} with $\rho=\varepsilon^{-1}$ for finding an $\epsilon$-stationary point $(\xe,\ye,\ze)$ of it.

In addition, notice from Algorithm \ref{alg2} that $\tf(x^0,y^0)\leq\min_y\tf(x^0,y)+\varepsilon$.    
Using this, \eqref{unc-mmax} and $\rho=\varepsilon^{-1}$, we obtain 
\begin{equation}\label{t7-3}
\max_z P_\rho(x^0,y^0,z)=f(x^0,y^0)+\rho(\tf(x^0,y^0)-\min_z\tf(x^0,z))\leq f(x^0,y^0)+\rho\varepsilon= f(x^0,y^0)+1.
\end{equation}
By this and \eqref{upperbnd} with $\bH=P_{\rho}$, $\epsilon=\varepsilon$, $\epsilon_0=\varepsilon^{3/2}$, $\hat x^0=(x^0,y^0)$, $D_q=D_\bfy$, and $L_{\nabla \bh}=\hL$,  one has
\begin{align*}
\max_zP_\rho(\xe,\ye,z)\leq\ &\  \max_zP_\rho(x^0,y^0,z)+\varepsilon D_\bfy/4+2\varepsilon^3(\hL^{-1}+4D_\bfy^2\hL\varepsilon^{-2})\notag\\
\overset{\eqref{t7-3}}{\leq}&\ 1+f(x^0,y^0)+\varepsilon D_\bfy/4+2\varepsilon^3(\hL^{-1}+4D_\bfy^2\hL\varepsilon^{-2}).
\end{align*}
It then follows from this and Lemma~\ref{t2} with $\epsilon=\varepsilon$ and $\rho=\varepsilon^{-1}$ that $(\xe,\ye,\ze)$ satisfies \eqref{unc-gap1} and \eqref{unc-gap2}.

We next show that at most $\widehat N$ evaluations of $\nabla f_1$, $\nabla\tf_1$, and proximal operator of $f_2$ and $\tf_2$ are respectively performed in Algorithm \ref{alg2}. Indeed, by \eqref{tfbnd}, \eqref{lower-bnd} and \eqref{unc-mmax}, one has 
\begin{align}
&\min_{x,y}\max_z P_\rho(x,y,z)\overset{\eqref{unc-mmax}}{=} \min_{x,y}\{f(x,y)+\rho(\tf(x,y)-\min_z\tf(x,z))\}\geq\min_{(x,y)\in\mcX\times\mcY}f(x,y)\overset{\eqref{lower-bnd}}{=}f_{\rm low},\label{t7-1}\\
&\min_{(x,y,z)\in\mcX\times\mcY\times\mcY}P_\rho(x,y,z)\overset{\eqref{unc-mmax}}{=} \min_{(x,y,z)\in\mcX\times\mcY\times\mcY}\{f(x,y)+\rho(\tf(x,y)-\tf(x,z))\}\overset{\eqref{tfbnd}\eqref{lower-bnd}}{\geq}\  f_{\rm low}+\rho(\tf_{\rm low}-\tf_{\rm hi}).\label{t7-2}
\end{align}
For convenience of the rest proof, let
\beq \label{special-bH}
\bH=P_\rho, \quad \bH^*=\min_{x,y}\max_zP_\rho(x,y,z), \quad \bH_{\rm low}=\min\{P_\rho(x,y,z)|(x,y,z)\in\mcX\times\mcY\times\mcY\}.
\eeq 
In view of these, \eqref{unc-ineq}, \eqref{t7-3}, \eqref{t7-1}, \eqref{t7-2}, and $\rho=\varepsilon^{-1}$, we obtain that
\begin{align*}
&\max_z\bH(x^0,y^0, z)\overset{\eqref{t7-3}}{\leq} f(x^0,y^0)+1, \qquad f_{\rm low}\overset{\eqref{t7-1}}{\leq}\bH^*\overset{\eqref{unc-ineq}}\leq f^*,\\
&\bH_{\rm low}\overset{\eqref{t7-2}}{\geq} f_{\rm low}+\rho(\tf_{\rm low}-\tf_{\rm hi})=f_{\rm low}+\varepsilon^{-1}(\tf_{\rm low}-\tf_{\rm hi}).
\end{align*}
Using these and Theorem~\ref{mmax-thm} with $\epsilon=\varepsilon$, $\hat x^0=(x^0,y^0)$, $D_p=\sqrt{D_\bfx^2+D_\bfy^2}$, $D_q=D_\bfy$, $\epsilon_0=\varepsilon^{3/2}$, $L_{\nabla\bh}=\hL$, $\alpha=\hat\alpha$, $\Cr=\hat \Cr$, and $\bH$, $\bH^*$, $\bH_{\rm low}$ given in \eqref{special-bH}, we can conclude that Algorithm \ref{alg2} performs at most $\widehat N$ evaluations of $\nabla f_1$, $\nabla\tf_1$ and proximal operator of $f_2$ and $\tf_2$ respectively for finding an approximate solution $(\xe,\ye)$ of problem \eqref{unc-prob} satisfying \eqref{unc-gap1} and \eqref{unc-gap2}.
\end{proof}

%

\subsection{Proof of the main results in Section~\ref{constr-BLO}}\label{sec:proof4}
In this subsection we prove Theorems \ref{c3} and \ref{complexity}. 
Before proceeding, 
 we define
\beq
r=G^{-1}D_\bfy(\rho^{-1}\epsilon+L_{\tf}), \quad \cBr = \{\lambda\in\bR^l_+: \|\lambda\|\leq r\},\label{def-r}
\eeq
where $D_\bfy$ is defined in \eqref{DxDy}, $G$ is given in Assumption~\ref{a2}(iii), and $\epsilon$ and $\rho$ are given in Algorithm \ref{alg4}. In addition, one can observe from \eqref{tfstarx} and \eqref{def-tFmu} that
\beq\label{p-ineq}
\min_z\tP_\mu(x,z)\leq \tf^*(x)\qquad \forall x\in\mcX,
\eeq
which will be frequently used later. 

We next establish several technical lemmas that will be used to prove Theorem \ref{c3} subsequently.


\begin{lemma}\label{dual-bnd}
Suppose that Assumptions \ref{a1} and \ref{a2} hold. Let $D_\bfy$,  $L_\tf$, $G$, $\tf^*$, $\tf^*_{\rm hi}$ and $\cBr$ be given in \eqref{DxDy}, \eqref{tfstarx}, \eqref{def-tFx}, \eqref{def-r} and Assumption~\ref{a2}, respectively. Then the following statements hold.
\begin{enumerate}[label=(\roman*)]
\item $\|\lambda^*\|\leq G^{-1}L_{\tf} D_\bfy$ and $\lambda^*\in\cBr$ for all $\lambda^*\in\Lambda^*(x)$ and $x\in\mcX$, where $\Lambda^*(x)$ denotes the set of optimal Lagrangian multipliers of problem 
\eqref{tfstarx} for any $x\in\mcX$.
\item The function $\tf^*$ is Lipschitz continuous on $\mcX$ and $\tf^*_{\rm hi}$ is finite.
\item It holds that
\beq\label{fstar-ref}
\tf^*(x)=\max_{\lambda}\min_{z}\tf(x,z)+\langle\lambda,\tg(x,z)\rangle-\cI_{\bR_+^l}(\lambda) \qquad \forall x\in\mcX,
\eeq
where $\cI_{\bR_+^l}(\cdot)$ is the indicator function associated with $\bR_+^l$.
\end{enumerate}
\end{lemma}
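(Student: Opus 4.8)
The plan is to establish the three claims in the logical order \textbf{(iii)}, then \textbf{(i)}, then \textbf{(ii)}, all by fixing $x\in\mcX$ and analyzing the lower-level convex program in \eqref{tfstarx}. By Assumption~\ref{a1}(i), $\tf(x,\cdot)=\tf_1(x,\cdot)+\tf_2$ is a proper closed convex function whose domain is contained in the compact set $\mcY$, and by Assumption~\ref{a2}(iii) each $\tg_i(x,\cdot)$ is convex while $\hat z_x\in\mcY$ is a Slater point with $\tg_i(x,\hat z_x)\le -G<0$ for all $i$. Since $\{z\in\mcY:\tg(x,z)\le 0\}$ is nonempty (it contains $\hat z_x$) and compact, the infimum in \eqref{tfstarx} is attained, so $\tf^*(x)$ is finite. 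Thus \eqref{tfstarx} is a convex program satisfying Slater's condition, and a standard strong-duality theorem gives equality of the primal and dual values together with attainment of the dual optimum; spelling this out is precisely \eqref{fstar-ref}, and the maximizers on its right-hand side are exactly the (nonempty) set $\Lambda^*(x)\subseteq\bR^l_+$. This proves (iii).

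For (i), fix $\lambda^*\in\Lambda^*(x)$ and a primal optimizer $z^*$ of \eqref{tfstarx}. Strong duality and complementary slackness give $\tf(x,z^*)=\tf^*(x)=\min_z\{\tf(x,z)+\langle\lambda^*,\tg(x,z)\rangle\}$. Evaluating the minimand at $z=\hat z_x$ and using $\lambda^*\ge 0$ with $\tg_i(x,\hat z_x)\le -G$ yields $\tf(x,z^*)\le \tf(x,\hat z_x)-G\sum_i\lambda^*_i$, hence $G\|\lambda^*\|_1\le \tf(x,\hat z_x)-\tf(x,z^*)\le L_\tf\|\hat z_x-z^*\|\le L_\tf D_\bfy$, where the last two steps use that $\tf$ is $L_\tf$-Lipschitz on $\mcX\times\mcY$ (Assumption~\ref{a2}(i)) and that $\hat z_x,z^*\in\mcY$. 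Therefore $\|\lambda^*\|\le\|\lambda^*\|_1\le G^{-1}L_\tf D_\bfy$; since $r=G^{-1}D_\bfy(\rho^{-1}\epsilon+L_\tf)\ge G^{-1}L_\tf D_\bfy$ and $\lambda^*\in\bR^l_+$, it follows that $\lambda^*\in\cBr$.

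For (ii), set $\bar\lambda:=G^{-1}L_\tf D_\bfy$, fix $x,x'\in\mcX$, pick $\lambda^*\in\Lambda^*(x)$ (so $\|\lambda^*\|\le\bar\lambda$ by (i)), and let $w\in\mcY$ attain the inner minimum of $z\mapsto \tf(x',z)+\langle\lambda^*,\tg(x',z)\rangle$, which exists since $\mcY$ is compact. Using \eqref{fstar-ref} at $x'$ with the feasible multiplier $\lambda^*$ (so $-\cI_{\bR^l_+}(\lambda^*)=0$) together with the dual representation $\tf^*(x)=\min_z\{\tf(x,z)+\langle\lambda^*,\tg(x,z)\rangle\}$ from step (iii), one gets $\tf^*(x)-\tf^*(x')\le \big(\tf(x,w)-\tf(x',w)\big)+\big\langle\lambda^*,\tg(x,w)-\tg(x',w)\big\rangle\le\big(L_\tf+\bar\lambda L_\tg\big)\|x-x'\|$, where the last inequality uses the $L_\tf$- and $L_\tg$-Lipschitz continuity of $\tf$ and $\tg$ on $\mcX\times\mcY$ and Cauchy--Schwarz. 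Swapping $x$ and $x'$ gives $|\tf^*(x)-\tf^*(x')|\le(L_\tf+\bar\lambda L_\tg)\|x-x'\|$, so $\tf^*$ is Lipschitz, hence continuous, on the compact set $\mcX$; consequently $\tf^*_{\rm hi}=\sup_{x\in\mcX}\tf^*(x)$ is finite.

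I expect the only genuinely delicate point to be the appeal to convex strong duality \emph{with dual attainment} in step (iii): one must confirm that \eqref{tfstarx}, in which $\tf_2$ (and implicitly the restriction $z\in\mcY$) is merely a proper closed convex function rather than a smooth one, still meets the hypotheses of such a theorem. This is where compactness of $\mcY$ and the \emph{uniform} Slater condition $G>0$ of Assumption~\ref{a2}(iii) carry the weight — the former secures primal attainment, and the latter is exactly what makes the multiplier bound in (i), and hence the Lipschitz modulus in (ii), independent of $x\in\mcX$.
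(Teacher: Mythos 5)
Your proposal is correct; the differences from the paper's proof are in organization and in parts (ii)--(iii), while your part (i) is essentially the paper's argument (primal--dual optimal pair, evaluate the Lagrangian at the uniform Slater point $\hat z_x$, use $L_\tf$-Lipschitz continuity and $D_\bfy$). The paper proceeds in the order (i)$\to$(ii)$\to$(iii): having bounded the multipliers, it writes $\tf^*(x)=\min_z\max_{\lambda\in\cBr}\{\tf(x,z)+\langle\lambda,\tg(x,z)\rangle\}$ (its \eqref{tfstar-ref}), gets (ii) qualitatively by observing that the inner max over the compact ball $\cBr$ is a Lipschitz function of $(x,z)$ so its partial minimum in $z$ is Lipschitz in $x$, and gets (iii) by combining weak duality with strong duality for the compact--compact saddle problem over $\mcY\times\cBr$. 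You instead start from Slater-based strong duality \emph{with dual attainment} for \eqref{tfstarx}, which yields \eqref{fstar-ref} and the nonemptiness of $\Lambda^*(x)$ at once, and then prove (ii) by a direct two-point estimate using an optimal multiplier at $x$ and a minimizer of the Lagrangian at $x'$, giving the explicit modulus $L_\tf+G^{-1}L_\tf D_\bfy L_\tg$. What each buys: your route makes the existence of multipliers explicit (something the paper uses tacitly, e.g.\ in the proof of Lemma \ref{t4}) and produces a quantitative Lipschitz constant; the paper's route avoids invoking dual attainment for a program whose objective contains the merely proper closed convex $\tf_2$, relying only on a compactness-based minimax equality plus weak duality. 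The delicate point you flag is indeed closable without relative-interior conditions: since the $\tg_i(x,\cdot)$ are real-valued convex and $\hat z_x\in\mcY=\dom\tf_2$ satisfies $\tg_i(x,\hat z_x)\le -G<0$, the perturbation value function $u\mapsto\min_z\{\tf(x,z):\tg(x,z)\le u\}$ is finite on a neighborhood of $0$ and bounded below by $\tf_{\rm low}$, hence subdifferentiable at $0$, which gives zero duality gap and dual attainment; alternatively, you could sidestep this entirely by following the paper's bounded-ball argument once (i) is in hand.
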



\begin{proof}
(i) Let $x\in\mcX$ and $\lambda^*\in\Lambda^*(x)$ be arbitrarily chosen, and let $z^*\in\mcY$ be such that $(z^*,\lambda^*)$ 
is a pair of primal-dual optimal solutions of \eqref{tfstarx}. It then follows that
\[
z^*\in\argmin_{z} \tf(x,z)+\langle\lambda^*,\tg(x,z)\rangle, \quad \langle\lambda^*,\tg(x,z^*)\rangle=0, \quad \tg(x,z^*) \leq 0, \quad \lambda^* \geq 0.
\]
The first relation above yields
\[
\tf(x,z^*)+\langle\lambda^*,\tg(x,z^*)\rangle\leq\tf(x,\hat z_x)+\langle\lambda^*,\tg(x,\hat z_x)\rangle,
\]
where $\hat z_x$ is given in Assumption~\ref{a2}(iii). By this and $\langle\lambda^*,\tg(x,z^*)\rangle=0$, one has
\begin{equation*}
\langle\lambda^*,-\tg(x,\hat z_x)\rangle\leq\tf(x,\hat z_x)-\tf(x,z^*),
\end{equation*}
which together with $\lambda^*\geq 0$, \eqref{DxDy} and Assumption~\ref{a2} implies that
\beq \label{lambda-bnd}
G \sum_{i=1}^l\lambda_i^*\leq \langle\lambda^*,-\tg(x,\hat z_x)\rangle \leq \tf(x,\hat z_x)-\tf(x,z^*) \leq L_{\tf}\|\hat z_x-z^*\| \leq L_{\tf}D_\bfy,
\eeq
where the first inequality is due to Assumption~\ref{a2}(iii), and the third inequality follows from \eqref{DxDy} and $L_\tf$-Lipschitz continuity of $\tf$ (see Assumption \ref{a2}(i)). By \eqref{def-r}, \eqref{lambda-bnd} and $\lambda^*\geq 0$, we have $\|\lambda^*\|\leq\sum_{i=1}^l\lambda_i^*\leq G^{-1}L_{\tf} D_\bfy$ and $\lambda^*\in\cBr$.


(ii) Recall from Assumptions \ref{a1}(i) and \ref{a2}(iii) that $\tf(x,\cdot)$ and $\tg_i(x,\cdot), \ i=1,\ldots,l,$  are convex for any given  $x\in\mcX$.  Using this, \eqref{tfstarx} and the first statement of this lemma, we observe that 
\beq \label{tfstar-ref}
\tf^*(x)=\min_{z}\max_{\lambda\in\cBr}\tf(x,z)+\langle\lambda,\tg(x,z)\rangle \qquad \forall x\in\mcX.
\eeq
Notice from Assumption~\ref{a2} that $\tf$ and $\tg$ are Lipschitz continuous on their domain. Then it is not hard to observe that $\max\{\tf(x,z)+\langle\lambda,\tg(x,z)\rangle|\lambda\in\cBr\}$ is a Lipschitz continuous function of $(x,z)$ on its domain. By this and \eqref{tfstar-ref}, one can easily verify that $\tf^*$ is Lipschitz continuous on $\mcX$. In addition, the finiteness of $\tf^*_{\rm hi}$ follows from \eqref{def-tFx}, the continuity of $\tf^*$, and the compactness of $\mcX$.

(iii) One can observe from \eqref{tfstarx} that for all $x\in\mcX$,
\[
\tf^*(x)=\min_{z}\max_{\lambda}\tf(x,z)+\langle\lambda,\tg(x,z)\rangle-\cI_{\bR_+^l}(\lambda)\geq\max_{\lambda}\min_{z}\tf(x,z)+\langle\lambda,\tg(x,z)\rangle-\cI_{\bR_+^l}(\lambda)
\]
where the inequality follows from the weak duality. In addition, it follows from Assumption \ref{a1} that the domain of $\tf(x,\cdot)$ is compact for all $x\in\mcX$. By this, \eqref{tfstar-ref} and the strong duality, one has
\[
\tf^*(x)=\max_{\lambda\in\cBr}\min_{z}\tf(x,z)+\langle\lambda,\tg(x,z)\rangle-\cI_{\bR_+^l}(\lambda)\qquad\forall x\in\mcX,
\]
which together with the above inequality implies that \eqref{fstar-ref} holds.
\end{proof}

\begin{lemma}
Suppose that Assumptions \ref{a1} and \ref{a2} hold and that $(\xe, \ye, \ze)$ is an $\epsilon$-optimal solution of problem \eqref{mmax} for some $\epsilon>0$. Let $f_{\rm low}$, $f$, $\tP_\mu$, $f^*_\mu$, $\rho$ and $\mu$ be given in \eqref{lower-bnd}, \eqref{prob}, \eqref{def-tFmu}, \eqref{def-Fmu} and \eqref{mmax}, respectively. Then we have
\begin{equation}\label{gap}
\tP_\mu(\xe,\ye)\leq\min_z\tP_\mu(\xe,z)+\rho^{-1}(f_\mu^*-f_{\rm low}+2\epsilon), \qquad f(\xe,\ye)\leq f_\mu^*+2\epsilon.
\end{equation}
\end{lemma}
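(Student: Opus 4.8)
The plan is to follow the proof of Lemma~\ref{t1} almost verbatim, replacing $\tf$, $f^*$, $P_\rho$, and problem~\eqref{unc-prob} by $\tP_\mu$, $f^*_\mu$, $P_{\rho,\mu}$, and problem~\eqref{def-Fmu}, respectively. First I would use that $(\xe,\ye,\ze)$ is an $\epsilon$-optimal solution of the minimax problem~\eqref{mmax}: by Definition~\ref{def1},
\[
\max_z P_{\rho,\mu}(\xe,\ye,z)\le P_{\rho,\mu}(\xe,\ye,\ze)+\epsilon,\qquad P_{\rho,\mu}(\xe,\ye,\ze)\le\min_{x,y}\max_z P_{\rho,\mu}(x,y,z)+\epsilon,
\]
and summing these gives $\max_z P_{\rho,\mu}(\xe,\ye,z)\le\min_{x,y}\max_z P_{\rho,\mu}(x,y,z)+2\epsilon$. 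In particular the left-hand side is finite, so from \eqref{Prhomu-ref} we get $f_2(\xe)<\infty$ and $\tf_2(\ye)<\infty$, i.e.\ $(\xe,\ye)\in\mcX\times\mcY$; moreover $\tP_\mu(\xe,\cdot)$ then has compact domain $\mcY$, so $\min_z\tP_\mu(\xe,z)$ is attained and finite.

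Next I would bound $\min_{x,y}\max_z P_{\rho,\mu}$ from above by plugging in an optimal solution $(x^*,y^*)$ of~\eqref{def-Fmu} (its existence follows from Assumption~\ref{a1}, the compactness of $\mcX\times\mcY$, and closedness of the graph of $x\mapsto\argmin_z\tP_\mu(x,z)$; alternatively one may take an $\eta$-optimal point and let $\eta\downarrow0$ at the end). For such a point $y^*\in\argmin_z\tP_\mu(x^*,z)$, so $\tP_\mu(x^*,y^*)=\min_z\tP_\mu(x^*,z)$, and hence by the definition of $P_{\rho,\mu}$ in~\eqref{mmax},
\[
\max_z P_{\rho,\mu}(x^*,y^*,z)=f(x^*,y^*)+\rho\big(\tP_\mu(x^*,y^*)-\min_z\tP_\mu(x^*,z)\big)=f(x^*,y^*)=f^*_\mu,
\]
which yields $\min_{x,y}\max_z P_{\rho,\mu}(x,y,z)\le f^*_\mu$.

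Finally, combining the two bounds with the identity $\max_z P_{\rho,\mu}(\xe,\ye,z)=f(\xe,\ye)+\rho\big(\tP_\mu(\xe,\ye)-\min_z\tP_\mu(\xe,z)\big)$ gives
\[
f(\xe,\ye)+\rho\big(\tP_\mu(\xe,\ye)-\min_z\tP_\mu(\xe,z)\big)\le f^*_\mu+2\epsilon .
\]
Since $\tP_\mu(\xe,\ye)\ge\min_z\tP_\mu(\xe,z)$, this immediately gives $f(\xe,\ye)\le f^*_\mu+2\epsilon$; rearranging the same inequality gives $\tP_\mu(\xe,\ye)\le\min_z\tP_\mu(\xe,z)+\rho^{-1}\big(f^*_\mu-f(\xe,\ye)+2\epsilon\big)$, and then using $(\xe,\ye)\in\mcX\times\mcY$ together with~\eqref{lower-bnd} to replace $f(\xe,\ye)$ by the lower bound $f_{\rm low}$ yields~\eqref{gap}.

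This argument is essentially routine; the only steps that require a little attention are (a) confirming $(\xe,\ye)\in\mcX\times\mcY$ so that $f(\xe,\ye)\ge f_{\rm low}$ is legitimate, which is handled via the finiteness of $\max_z P_{\rho,\mu}(\xe,\ye,z)$, and (b) the existence of an optimal solution of~\eqref{def-Fmu}, which I would either take from the standing assumptions of Section~\ref{constr-BLO} or obtain from a compactness/closed-graph argument, falling back on an $\eta$-optimal point and a limiting argument if needed (here $f^*_\mu$ is finite because $\argmin_z\tP_\mu(x,z)\ne\emptyset$ for every $x\in\mcX$ and $f\ge f_{\rm low}$ on $\mcX\times\mcY$).
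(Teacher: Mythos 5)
Your proposal is correct and follows essentially the same route as the paper, which proves this lemma simply by repeating the argument of Lemma~\ref{t1} with $f^*$, $\tf$ and $P_\rho$ replaced by $f^*_\mu$, $\tP_\mu$ and $P_{\rho,\mu}$. The extra care you take about $(\xe,\ye)\in\mcX\times\mcY$ and the existence of an optimizer of \eqref{def-Fmu} is sound but does not change the argument.
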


\begin{proof}
The proof follows from the same argument as the one for Lemma~\ref{t1} with $f^*$ and $\tf$ being replaced by $f^*_\mu$ and $\tP_\mu$, respectively.
\end{proof}

\begin{lemma}
Suppose that Assumptions \ref{a1}-\ref{errorbnd1} hold. Let $\tf_{\rm low}$, $f^*$, $\tf^*_{\rm hi}$, $f^*_\mu$ be defined in \eqref{tfbnd}, \eqref{prob}, \eqref{def-tFx} and \eqref{def-Fmu}, and $L_f$, $\omega$ and $\bar\theta$ be given in Assumptions \ref{a2} and \ref{errorbnd1}. Suppose that $\mu \geq (\tf^*_{\rm hi}-\tf_{\rm low})/\bar\theta^2$. Then we have
\begin{equation}\label{pena-dif}
f_\mu^*\leq f^*+L_f\omega\Big(\sqrt{\mu^{-1}(\tf^*_{\rm hi}-\tf_{\rm low})}\Big).
\end{equation}
\end{lemma}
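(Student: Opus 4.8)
The plan is to bound $f_\mu^*$ by exhibiting a feasible point for the penalized lower-level problem whose objective value is close to $f^*$. Let $(x^*, y^*)$ be an optimal solution of the original constrained bilevel problem \eqref{prob}, so that $f(x^*, y^*) = f^*$ and $y^* \in \argmin_z\{\tf(x^*, z) : \tg(x^*, z) \le 0\}$; in particular $y^* \in \cS_0(x^*)$ and $\tf(x^*, y^*) = \tf^*(x^*)$. The idea is that a minimizer $\bar z$ of $\tP_\mu(x^*, \cdot)$ must have small constraint violation $\|[\tg(x^*, \bar z)]_+\|$, because a large violation would be too heavily penalized relative to the bounded range of $\tf$; then the error bound in Assumption~\ref{errorbnd1} lets us find a point in $\cS_0(x^*)$ near $\bar z$, and Lipschitz continuity of $\tf$ transfers optimality.

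First I would estimate the constraint violation at $\bar z \in \argmin_z \tP_\mu(x^*, z)$. Using $\tP_\mu(x^*, \bar z) \le \tP_\mu(x^*, y^*) = \tf(x^*, y^*) + \mu\|[\tg(x^*, y^*)]_+\|^2 = \tf^*(x^*)$ (since $\tg(x^*, y^*) \le 0$), together with $\tf(x^*, \bar z) \ge \tf_{\rm low}$, I get
\[
\mu\|[\tg(x^*, \bar z)]_+\|^2 \le \tf^*(x^*) - \tf(x^*, \bar z) \le \tf^*_{\rm hi} - \tf_{\rm low},
\]
so $\theta := \|[\tg(x^*, \bar z)]_+\| \le \sqrt{\mu^{-1}(\tf^*_{\rm hi} - \tf_{\rm low})}$, which is $\le \bar\theta$ by the hypothesis $\mu \ge (\tf^*_{\rm hi} - \tf_{\rm low})/\bar\theta^2$. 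Hence $\bar z \in \cS_\theta(x^*)$ in the notation of Assumption~\ref{errorbnd1}. Next, since $y^* \in \cS_0(x^*)$, the error bound gives a point $\hat z \in \cS_\theta(x^*)$ with... — more precisely, I would apply it in the form $\dist(y^*, \cS_\theta(x^*)) \le \omega(\theta)$, which produces some $\hat z \in \cS_\theta(x^*)$ with $\|\hat z - y^*\| \le \omega(\theta)$ (here using that $\cS_\theta(x^*)$ is closed, so the distance is attained). Because $\hat z$ and $\bar z$ both minimize $\tf(x^*, \cdot)$ over $\{z : \|[\tg(x^*,z)]_+\| \le \theta\}$, we have $\tf(x^*, \hat z) = \tf(x^*, \bar z) = \min_z \tP_\mu(x^*, z) - \mu\theta^2 \le \min_z \tP_\mu(x^*, z)$, but more importantly $\bar z$ itself lies in $\argmin_z \tP_\mu(x^*, z)$, so $(x^*, \bar z)$ is feasible for problem \eqref{def-Fmu}.

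Then I would conclude: since $(x^*, \bar z)$ is feasible for \eqref{def-Fmu},
\[
f_\mu^* \le f(x^*, \bar z) \le f(x^*, y^*) + L_f \|\bar z - y^*\| = f^* + L_f\|\bar z - y^*\|,
\]
using $L_f$-Lipschitz continuity of $f$ from Assumption~\ref{a2}(i). It remains to bound $\|\bar z - y^*\|$ by $\omega(\theta)$; this is where I'd use the error bound most carefully — $\bar z \in \cS_\theta(x^*)$ and $y^* \in \cS_0(x^*) \subseteq \cS_\theta(x^*)$, but I actually want $\dist(\bar z, \cS_0(x^*))$ or a direct comparison. The cleanest route is: Assumption~\ref{errorbnd1} bounds $\dist(z, \cS_\theta(x))$ for $z \in \cS_0(x)$; I want the reverse direction. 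The main obstacle will be matching the direction of the error bound: Assumption~\ref{errorbnd1} controls how far points of $\cS_0$ are from $\cS_\theta$, whereas here I have a point $\bar z \in \cS_\theta$ and need it close to $\cS_0$. I expect the resolution is that $\cS_\theta \supseteq \cS_0$ together with the fact that $\tf(x^*, \cdot)$ attains the same value on both (as computed above, when $\theta$ is the active tolerance the minimizer may saturate the constraint, but $y^* \in \cS_0$ achieves value $\tf^*(x^*) \le \min_z\tP_\mu(x^*,z) = \tf(x^*,\bar z) + \mu\theta^2$, forcing $\tf(x^*,\bar z) \le \tf^*(x^*)$, hence equality and $\bar z \in \cS_0(x^*)$ when $\theta$'s constraint is inactive, or $\bar z \in \cS_\theta$ with the error bound applied symmetrically). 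I would sort out this symmetry issue — likely the intended reading of Assumption~\ref{errorbnd1} makes $\dist(\bar z, \{y^*\}\text{-type sets})$ directly available, or one invokes it at $y^* \in \cS_0(x^*)$ to get a nearby $\cS_\theta$ point and then uses that $\tf(x^*,\cdot)$ is constant on the relevant optimal sets to replace it by $\bar z$ — and then the bound $\|\bar z - y^*\| \le \omega(\theta) \le \omega\big(\sqrt{\mu^{-1}(\tf^*_{\rm hi}-\tf_{\rm low})}\big)$ follows from monotonicity of $\omega$, giving exactly \eqref{pena-dif}.
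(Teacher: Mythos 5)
Your first half is exactly the paper's opening: bounding the violation of a minimizer $\bar z$ of $\tP_\mu(x^*,\cdot)$ via $\mu\|[\tg(x^*,\bar z)]_+\|^2\le\tf^*(x^*)-\tf(x^*,\bar z)\le\tf^*_{\rm hi}-\tf_{\rm low}$, so that $\theta:=\|[\tg(x^*,\bar z)]_+\|\le\bar\theta$, and noting $\bar z\in\cS_\theta(x^*)$. The gap is the final step, which you flag but do not close, and the route you lean toward cannot be repaired: the chain $f_\mu^*\le f(x^*,\bar z)\le f^*+L_f\|\bar z-y^*\|$ needs $\|\bar z-y^*\|\le\omega(\theta)$ for an \emph{arbitrary} minimizer $\bar z$ of $\tP_\mu(x^*,\cdot)$, and Assumption~\ref{errorbnd1} does not give this -- it bounds the distance from a point of $\cS_0(x^*)$ to the \emph{set} $\cS_\theta(x^*)$, not from $\bar z\in\cS_\theta(x^*)$ back to the particular point $y^*$. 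Indeed the claim is false in general: take $\tf_1\equiv 0$, $\tf_2=\cI_{[-1,1]^m}$ and $\tg\equiv -1$; then every $z\in[-1,1]^m$ minimizes both the constrained lower-level problem and $\tP_\mu(x^*,\cdot)$, Assumption~\ref{errorbnd1} holds with $\omega\equiv 0$ and $\theta=0$, yet $\bar z$ may sit at distance $D_\bfy$ from $y^*$ with $f(x^*,\bar z)$ much larger than $f^*$, so $f_\mu^*\le f(x^*,\bar z)$ is true but useless. For the same reason, the idea of ``using that $\tf(x^*,\cdot)$ is constant on the optimal sets to replace $\hat z$ by $\bar z$'' is precisely the move that fails.

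The correct completion -- and the paper's proof -- keeps the point produced by the error bound and never returns to $\bar z$. Apply Assumption~\ref{errorbnd1} at $y^*\in\cS_0(x^*)$ with tolerance $\theta=\|[\tg(x^*,\bar z)]_+\|\le\bar\theta$ to get $\hat z\in\cS_\theta(x^*)$ with $\|\hat z-y^*\|\le\omega(\theta)$. Since $\bar z$ is feasible for the problem defining $\cS_\theta(x^*)$, you have $\tf(x^*,\hat z)\le\tf(x^*,\bar z)$ and $\|[\tg(x^*,\hat z)]_+\|\le\|[\tg(x^*,\bar z)]_+\|$, hence $\tP_\mu(x^*,\hat z)\le\tP_\mu(x^*,\bar z)=\min_z\tP_\mu(x^*,z)$, i.e., $\hat z$ itself lies in $\argmin_z\tP_\mu(x^*,z)$. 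Thus $(x^*,\hat z)$ is feasible for \eqref{def-Fmu}, and $f_\mu^*\le f(x^*,\hat z)\le f(x^*,y^*)+L_f\|\hat z-y^*\|\le f^*+L_f\,\omega(\theta)\le f^*+L_f\,\omega\big(\sqrt{\mu^{-1}(\tf^*_{\rm hi}-\tf_{\rm low})}\big)$ by monotonicity of $\omega$, which is \eqref{pena-dif}. (The paper runs this argument at an arbitrary feasible pair $(x,y)$ of \eqref{prob} and takes an infimum at the end; your fixing an optimal $(x^*,y^*)$ is an inessential difference.)
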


\begin{proof}
Let $x\in\mcX$, $y\in\argmin_{z}\{\tf(x,z)|\tg(x,z)\leq0\}$ and $z^*\in\argmin_{z}\tP_\mu(x,z)$ be arbitrarily chosen.
One can easily see from \eqref{def-tFmu} and \eqref{p-ineq} that 
$\tf(x,z^*)+\mu\left\|[\tg(x,z^*)]_+\right\|^2\leq\tf^*(x)$, which together with \eqref{tfbnd} and \eqref{def-tFx} implies that
\begin{equation}\label{gnorm}
\left\|[\tg(x,z^*)]_+\right\|^2\leq\mu^{-1}(\tf^*_{\rm hi}-\tf_{\rm low}).
\end{equation}
Since $\mu \geq (\tf^*_{\rm hi}-\tf_{\rm low})/\bar\theta^2$, it follows from \eqref{gnorm} that 
$\|[\tg(x,z^*)]_+\| \leq \bar\theta$. By this relation, $y\in\argmin\limits_z\{\tf(x,z)|\tg(x,z)\leq0\}$ and Assumption~\ref{errorbnd1}, there exists some $\hz^*$ such that 
\beq \label{yz-dist}
\|y-\hz^*\|\leq \omega(\left\|[\tg(x,z^*)]_+\right\|), \qquad  \hz^*\in\argmin_z \left\{\tf(x,z)\big|\, \|[\tg(x,z)]_+\| \leq \|[\tg(x,z^*)]_+\|\right\}.
\eeq
In view of \eqref{def-tFmu}, $z^*\in\argmin_{z}\tP_\mu(x,z)$ and the second relation in \eqref{yz-dist},  one can observe that $\hz^*\in\argmin_{z}\tP_\mu(x,z)$, which along with \eqref{def-Fmu} yields $f(x,\hz^*) \geq f^*_\mu$. Also, using \eqref{yz-dist} and  $L_f$-Lipschitz continuity of $f$ (see Assumption \ref{a2}), we have
\begin{equation*}
f(x,y)-f(x,\hz^*)\geq-L_f\|y-\hz^*\|\overset{\eqref{yz-dist}}\geq-L_f\omega(\left\|[\tg(x,z^*)]_+\right\|).
\end{equation*}
Taking minimum over $x\in\mcX$ and $y\in\argmin_{z}\{\tf(x,z)|\tg(x,z)\leq0\}$ on both sides of this relation, and using \eqref{prob},      
 \eqref{gnorm}, $f(x,\hz^*) \geq f^*_\mu$ and the monotonicity of $\omega$, we can conclude that \eqref{pena-dif} holds. 
\end{proof}

\begin{lemma}\label{t3}
Suppose that Assumptions \ref{a1}-\ref{errorbnd1} hold. Let $\tf_{\rm low}$, $f_{\rm low}$, $f$, $\tf$, $f^*$, $\tf^*$, $\tf^*_{\rm hi}$, $\rho$ and $\mu$ be given in \eqref{tfbnd}, \eqref{lower-bnd}, \eqref{prob}, \eqref{tfstarx}, \eqref{def-tFx} and \eqref{mmax}, and $L_f$, $\omega$ and $\bar\theta$ be given in Assumptions \ref{a2} and \ref{errorbnd1}, respectively.
Suppose that $\mu \geq (\tf^*_{\rm hi}-\tf_{\rm low})/\bar\theta^2$ and $(\xe, \ye, \ze)$ is an $\epsilon$-optimal solution of problem \eqref{mmax} for some $\epsilon>0$. Then we have
\begin{align*}
&f(\xe,\ye)\leq f^*+L_f\omega\Big(\sqrt{\mu^{-1}(\tf^*_{\rm hi}-\tf_{\rm low})}\Big)+2\epsilon,\\
&\tf(\xe,\ye)\leq \tf^*(\xe)+\rho^{-1}\Big(f^*-f_{\rm low}+L_f\omega\Big(\sqrt{\mu^{-1}(\tf^*_{\rm hi}-\tf_{\rm low})}\Big)+2\epsilon\Big),\\
&\left\|[\tg(\xe,\ye)]_+\right\|^2\leq \mu^{-1}\Big(\tf^*(\xe)-\tf_{\rm low}+\rho^{-1}\Big(f^*-f_{\rm low}+L_f\omega\Big(\sqrt{\mu^{-1}(\tf^*_{\rm hi}-\tf_{\rm low})}\Big)+2\epsilon\Big)\Big).
\end{align*}
\end{lemma}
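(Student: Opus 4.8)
The plan is to obtain the three claimed estimates by reassembling the two lemmas immediately preceding, namely the one establishing \eqref{gap} and the one establishing \eqref{pena-dif}, together with the elementary bound \eqref{p-ineq}, the definition \eqref{def-tFmu} of $\tP_\mu$, and the lower bounds \eqref{tfbnd} and \eqref{lower-bnd}. A preliminary observation is that, since $P_{\rho,\mu}$ contains the nonsmooth term $f_2(x)+\rho\tf_2(y)-\rho\tf_2(z)$ (see \eqref{Prhomu-ref}), whose effective domain forces $x\in\mcX$, $y\in\mcY$, $z\in\mcY$, any $\epsilon$-optimal solution $(\xe,\ye,\ze)$ of \eqref{mmax} lies in $\mcX\times\mcY\times\mcY$; hence the finiteness/compactness bounds $\tf_{\rm low}\le\tf(\xe,\ye)$, $\tf^*(\xe)\le\tf^*_{\rm hi}$, and \eqref{p-ineq} evaluated at $\xe$ may all legitimately be invoked. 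Note also that the standing hypothesis $\mu\ge(\tf^*_{\rm hi}-\tf_{\rm low})/\bar\theta^2$ is precisely what is required to apply \eqref{pena-dif}.

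First I would prove the bound on $f(\xe,\ye)$: chain the second relation of \eqref{gap}, $f(\xe,\ye)\le f_\mu^*+2\epsilon$, with \eqref{pena-dif}, $f_\mu^*\le f^*+L_f\omega(\sqrt{\mu^{-1}(\tf^*_{\rm hi}-\tf_{\rm low})})$, which gives the first inequality verbatim.

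Next, for the bound on $\tf(\xe,\ye)$, I would start from the first relation of \eqref{gap}, use $\tP_\mu(\xe,\ye)=\tf(\xe,\ye)+\mu\|[\tg(\xe,\ye)]_+\|^2\ge\tf(\xe,\ye)$ on the left-hand side, bound $\min_z\tP_\mu(\xe,z)\le\tf^*(\xe)$ via \eqref{p-ineq}, and replace $f_\mu^*$ using \eqref{pena-dif}; this yields exactly the claimed estimate. For the bound on $\|[\tg(\xe,\ye)]_+\|^2$, I would repeat the same chain but keep the penalty term, i.e.,
\[
\tf(\xe,\ye)+\mu\|[\tg(\xe,\ye)]_+\|^2=\tP_\mu(\xe,\ye)\le\tf^*(\xe)+\rho^{-1}\big(f^*-f_{\rm low}+L_f\omega(\sqrt{\mu^{-1}(\tf^*_{\rm hi}-\tf_{\rm low})})+2\epsilon\big),
\]
then use $\tf(\xe,\ye)\ge\tf_{\rm low}$ from \eqref{tfbnd} and divide through by $\mu>0$.

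I do not expect any genuine obstacle: the argument is a routine combination of the preceding lemmas. The only points demanding a little care are confirming that $(\xe,\ye,\ze)\in\mcX\times\mcY\times\mcY$ so that the compactness-based constants are used only where they are valid, and checking that the hypothesis on $\mu$ is strong enough to license \eqref{pena-dif} (and, implicitly, that $\mu,\rho>0$, so the divisions are harmless).
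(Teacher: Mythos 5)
Your proposal is correct and follows essentially the same route as the paper's proof: combine the first relation of \eqref{gap} with \eqref{p-ineq} and \eqref{def-tFmu} to bound $\tP_\mu(\xe,\ye)$ by $\tf^*(\xe)+\rho^{-1}(f_\mu^*-f_{\rm low}+2\epsilon)$, split that into the $\tf(\xe,\ye)$ and $\|[\tg(\xe,\ye)]_+\|^2$ bounds using $\tf(\xe,\ye)\geq\tf_{\rm low}$, and then eliminate $f_\mu^*$ via \eqref{pena-dif} together with the second relation of \eqref{gap}. Your extra remarks on $(\xe,\ye,\ze)\in\mcX\times\mcY\times\mcY$ and the role of the hypothesis on $\mu$ are fine housekeeping that the paper leaves implicit.
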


\begin{proof}
By \eqref{def-tFmu}, \eqref{p-ineq}, and the first relation in \eqref{gap}, one has
\[
\tf(\xe,\ye)+\mu\left\|[\tg(\xe,\ye)]_+\right\|^2\overset{\eqref{def-tFmu}}{=}\tP_\mu(\xe,\ye) \overset{\eqref{p-ineq} \eqref{gap}}{\leq}\tf^*(\xe)+\rho^{-1}(f_\mu^*-f_{\rm low}+2\epsilon).
\]
It then follows from this and \eqref{tfbnd} that
\[
\tf(\xe,\ye)\leq\tf^*(\xe)+\rho^{-1}(f_\mu^*-f_{\rm low}+2\epsilon), \quad \left\|[\tg(\xe,\ye)]_+\right\|^2\leq\mu^{-1}(\tf^*(\xe)-\tf_{\rm low}+\rho^{-1}(f_\mu^*-f_{\rm low}+2\epsilon)).
\]
In addition, recall from \eqref{gap} that $f(\xe,\ye)\leq f_\mu^*+2\epsilon$. The conclusion of this lemma then follows from these three relations and \eqref{pena-dif}.
\end{proof}

We are now ready to prove Theorem~\ref{c3}.

\begin{proof}[\textbf{Proof of Theorem~\ref{c3}}]
Let $\{(x^k,y^k,z^k)\}$ be generated by Algorithm \ref{alg3} with $\lim_{k\to\infty}(\rho_k,\mu_k,\epsilon_k)=(\infty,\infty,0)$. By considering a convergent subsequence if necessary, we assume without loss of generality that $\lim_{k\to\infty} (x^k,y^k)=(x^*,y^*)$. We now show that $(x^*,y^*)$ is an optimal solution of problem \eqref{prob}. Indeed, since $(x^k,y^k,z^k)$ is an $\epsilon_k$-optimal solution of \eqref{mmax} with $(\rho,\mu)=(\rho_k,\mu_k)$ and $\lim_{k\to\infty}\mu_k=\infty$, it follows from Lemma \ref{t3} with $(\rho,\mu,\epsilon)=(\rho_k,\mu_k,\epsilon_k)$ and $(\xe,\ye)=(x^k,y^k)$ that for all sufficiently large $k$, one has
\begin{align*}
&f(x^k,y^k)\leq f^*+L_f\omega\Big(\sqrt{\mu^{-1}_k(\tf^*_{\rm hi}-\tf_{\rm low})}\Big)+2\epsilon_k,\\
&\tf(x^k,y^k)\leq \tf^*(x^k)+\rho_k^{-1}\Big(f^*-f_{\rm low}+L_f\omega\Big(\sqrt{\mu^{-1}_k(\tf^*_{\rm hi}-\tf_{\rm low})}\Big)+2\epsilon_k\Big),\\
&\big\|[\tg(x^k,y^k)]_+\big\|^2\leq \mu_k^{-1}\Big(\tf^*(x^k)-\tf_{\rm low}+\rho_k^{-1}\Big(f^*-f_{\rm low}+L_f\omega\Big(\sqrt{\mu^{-1}_k(\tf^*_{\rm hi}-\tf_{\rm low})}\Big)+2\epsilon_k\Big)\Big).
\end{align*}
By the continuity of $f$, $\tf$ and $\tf^*$ (see Assumption \ref{a1}(i) and Lemma \ref{dual-bnd}(ii)), $\lim_{k\to\infty} (x^k,y^k)=(x^*,y^*)$, $\lim_{k\to\infty}(\rho_k,\mu_k,\epsilon_k)=(\infty,\infty,0)$, $\lim_{\theta\downarrow 0}\omega(\theta)=0$, and taking limits as $k\to\infty$ on both sides of the above relations, we obtain that  $f(x^*,y^*)\leq f^*$, $\tf(x^*,y^*)\leq\tf^*(x^*)$ and $[\tg(x^*,y^*)]_+=0$, which along with \eqref{prob} and \eqref{tfstarx} imply that $f(x^*,y^*)=f^*$ and $y^*\in\argmin_z\{\tf(x^*,z)|\tg(x^*,z)\leq 0\}$.  Hence, $(x^*, y^*)$ is an optimal solution of  \eqref{prob} as desired.
\end{proof}

We next prove Theorem~\ref{complexity}. Before proceeding, we establish several technical lemmas below, which will be used to prove Theorem~\ref{complexity} subsequently.

\begin{lemma}
Suppose that Assumptions \ref{a1} and \ref{a2} hold and that $(\xe,\ye,\ze)$ is an $\epsilon$-stationary point of problem \eqref{mmax} for some $\epsilon>0$. Let $D_\bfy$, $\tg$, $\rho$, $\mu$, $L_f$, $L_\tf$ and $G$ be given in \eqref{DxDy}, \eqref{prob}, \eqref{mmax} and Assumption \ref{a2}, respectively. Then we have
\begin{align}
&\left\|[\tg(\xe,\ze)]_+\right\| \leq (2\mu G)^{-1}D_\bfy(\rho^{-1}\epsilon+L_{\tf}), \label{gxz} \\
&\left\|[\tg(\xe,\ye)]_+\right\|\leq (2\mu G)^{-1}D_\bfy(\rho^{-1}\epsilon+\rho^{-1}L_f+L_\tf). \label{gxy}
\end{align}
\end{lemma}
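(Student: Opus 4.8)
The plan is to extract from the $\epsilon$-stationarity of $(\xe,\ye,\ze)$ two approximate first-order inclusions, one coming from the $z$-block and one from the $y$-block of the minimax objective \eqref{mmax}, and then in each case to test the inclusion against the direction from the current point to the Slater point $\hat z_{\xe}$ supplied by Assumption~\ref{a2}(iii). Note first that a stationary point of \eqref{mmax} must satisfy $\xe\in\mcX$ and $\ye,\ze\in\mcY$, so $\hat z_{\xe}\in\mcY$ is available and $\|\ze-\hat z_{\xe}\|\le D_\bfy$, $\|\ye-\hat z_{\xe}\|\le D_\bfy$ by \eqref{DxDy}. Using the decomposition \eqref{Prhomu-ref} and the fact that each $\tg_i(x,\cdot)$ is convex and smooth (so $\|[\tg(x,\cdot)]_+\|^2$ is differentiable with $z$-gradient $2\nabla_z\tg(x,z)[\tg(x,z)]_+$), the $z$-part of Definition~\ref{def2} — the objective seen in $z$ is the concave function $-\rho\tP_\mu(x,z)$ — yields $s\in\partial_z\tf(\xe,\ze)$ and $e$ with $\|e\|\le\rho^{-1}\epsilon$ such that $2\mu\nabla_z\tg(\xe,\ze)[\tg(\xe,\ze)]_+=e-s$, where $\|s\|\le L_\tf$ by Assumption~\ref{a2}(i). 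Similarly, the $y$-part — the objective seen in $y$ is $f(x,y)+\rho\tP_\mu(x,y)$, independent of $z$ — yields $u$ with $\|u\|\le\epsilon$ and $v\in\partial_y\tf(\xe,\ye)$ such that $2\rho\mu\nabla_y\tg(\xe,\ye)[\tg(\xe,\ye)]_+=u-\nabla_y f(\xe,\ye)-\rho v$, with $\|\nabla_y f(\xe,\ye)\|\le L_f$ and $\|v\|\le L_\tf$ again by Assumption~\ref{a2}(i) and the splitting $f=f_1+f_2$, $\tf=\tf_1+\tf_2$.

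Next I would prove the key Slater-direction inequality: for any $x\in\mcX$ and $\zeta\in\mcY$, convexity of $\tg_i(x,\cdot)$ together with $\tg_i(x,\hat z_x)\le -G$ gives $\langle\nabla_\zeta\tg_i(x,\zeta),\zeta-\hat z_x\rangle\ge\tg_i(x,\zeta)-\tg_i(x,\hat z_x)\ge\tg_i(x,\zeta)+G$; multiplying by $[\tg_i(x,\zeta)]_+\ge0$, summing over $i$, and using $\sum_i[\tg_i(x,\zeta)]_+\ge\|[\tg(x,\zeta)]_+\|$ yields $\langle\nabla_\zeta\tg(x,\zeta)[\tg(x,\zeta)]_+,\zeta-\hat z_x\rangle\ge G\|[\tg(x,\zeta)]_+\|$. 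Applying this at $(\xe,\ze)$ together with the first inclusion and Cauchy--Schwarz gives $2\mu G\|[\tg(\xe,\ze)]_+\|\le\langle e-s,\ze-\hat z_{\xe}\rangle\le\|e-s\|\,\|\ze-\hat z_{\xe}\|\le(\rho^{-1}\epsilon+L_\tf)D_\bfy$, which is exactly \eqref{gxz}. Applying it at $(\xe,\ye)$ together with the second inclusion gives $2\rho\mu G\|[\tg(\xe,\ye)]_+\|\le\|u-\nabla_y f(\xe,\ye)-\rho v\|\,\|\ye-\hat z_{\xe}\|\le(\epsilon+L_f+\rho L_\tf)D_\bfy$, i.e.\ $\|[\tg(\xe,\ye)]_+\|\le(2\mu G)^{-1}D_\bfy(\rho^{-1}\epsilon+\rho^{-1}L_f+L_\tf)$, which is \eqref{gxy}.

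The main obstacle is the careful bookkeeping of the block subdifferentials of $P_{\rho,\mu}$: keeping straight that $\partial_z P_{\rho,\mu}$ only feels the term $-\rho\tP_\mu(x,z)$ while $\partial_y P_{\rho,\mu}$ only feels $f(x,y)+\rho\tP_\mu(x,y)$, handling the concavity in $z$ correctly, and using the additive splittings of $f$ and $\tf$ with Assumption~\ref{a2}(i) to control the auxiliary subgradients by $L_f$ and $L_\tf$ — together with the convexity argument behind the Slater-direction inequality. Everything after that is just Cauchy--Schwarz and the diameter bound from \eqref{DxDy}.
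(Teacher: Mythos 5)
Your overall architecture is the same as the paper's: read off the $z$-block and $y$-block stationarity conditions of $P_{\rho,\mu}$, pair each with the Slater-direction inequality $\langle\nabla\tg(\xe,\cdot)[\tg(\xe,\cdot)]_+,\cdot-\hat z_{\xe}\rangle\ge G\|[\tg(\xe,\cdot)]_+\|$ obtained from convexity of $\tg_i(\xe,\cdot)$ and $-\tg_i(\xe,\hat z_{\xe})\ge G$, and finish with Cauchy--Schwarz and the diameter bound \eqref{DxDy}; your treatment of \eqref{gxy} matches the paper's step for step (including the bounds $\|\nabla_y f(\xe,\ye)\|\le L_f$ and $\|v\|\le L_\tf$ that the paper also invokes). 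The genuine gap is in your proof of \eqref{gxz}: the claim $\|s\|\le L_\tf$ for the particular $s\in\partial_z\tf(\xe,\ze)$ delivered by stationarity does not follow from Assumption \ref{a2}(i). Since $\tf(x,\cdot)=\tf_1(x,\cdot)+\tf_2$ with $\tf_2$ a proper closed convex function whose domain is $\mcY$ (in the paper's own experiments $\tf_2$ is an indicator function), one has $\partial_z\tf(\xe,\ze)=\nabla_z\tf_1(\xe,\ze)+\partial\tf_2(\ze)$, and at a boundary point of $\mcY=\dom\,\tf_2$ this set contains normal-cone elements of arbitrarily large norm; Lipschitz continuity of the \emph{values} of $\tf$ on $\mcX\times\mcY$ does not bound them. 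Worse, the $s$ you actually get satisfies $s=e-2\mu\nabla_z\tg(\xe,\ze)[\tg(\xe,\ze)]_+$, so its norm is a priori of order $\mu\,\|[\tg(\xe,\ze)]_+\|$ --- precisely the quantity the lemma is trying to control --- and it is typically exactly the large normal-cone subgradient that cancels the penalty term. So the step $\|e-s\|\le\rho^{-1}\epsilon+L_\tf$ would fail as justified.

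The repair is exactly the paper's maneuver, and it slots into your computation with no other change: do not estimate $\langle -s,\ze-\hat z_{\xe}\rangle$ by a norm bound on $s$, but by the convex subgradient inequality for $\tf(\xe,\cdot)$, namely $\langle s,\hat z_{\xe}-\ze\rangle\le\tf(\xe,\hat z_{\xe})-\tf(\xe,\ze)\le L_\tf\|\hat z_{\xe}-\ze\|\le L_\tf D_\bfy$, where only the function-value Lipschitz continuity from Assumption \ref{a2}(i) is used. Combined with $\langle e,\ze-\hat z_{\xe}\rangle\le\rho^{-1}\epsilon D_\bfy$ and your Slater-direction inequality, this gives $2\mu G\|[\tg(\xe,\ze)]_+\|\le(\rho^{-1}\epsilon+L_\tf)D_\bfy$, i.e.\ \eqref{gxz}; this is literally how the paper argues (it keeps $\tf(\xe,\ze)-\tf(\xe,\hat z_{\xe})$ on the left of its chain and only bounds it by $L_\tf D_\bfy$ at the very end). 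Incidentally, the same device (subgradient inequality plus value-Lipschitzness) would also render the norm bound on $v\in\partial_y\tf(\xe,\ye)$ in the $y$-block unnecessary, since $\partial_y\tf(\xe,\ye)$ suffers from the same normal-cone issue; on that step, however, your argument coincides with the paper's own.
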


\begin{proof}
We first prove \eqref{gxz}. Since $(\xe,\ye,\ze)$ is an $\epsilon$-stationary point of \eqref{mmax}, it follows from Definition \ref{def2} that $\dist(0,\partial_z P_{\rho,\mu}(\xe,\ye,\ze))\leq\epsilon$. Also, by \eqref{def-tFmu} and \eqref{mmax}, one has
\beq \label{P-rhomu}
P_{\rho,\mu}(x,y,z)=f(x,y)+\rho(\tf(x,y)+\mu\left\|[\tg(x,y)]_+\right\|^2)-\rho(\tf(x,z)+\mu\left\|[\tg(x,z)]_+\right\|^2).
\eeq
Using these relations, we have
\begin{equation*}
\dist\Big(0,\partial_z\tf(\xe,\ze)+2\mu\sum^l_{i=1}[\tg_i(\xe,\ze)]_+\nabla_z \tg_i(\xe,\ze)\Big)\leq\rho^{-1}\epsilon.
\end{equation*}
Hence, there exists $s\in\partial_z\tf(\xe,\ze)$ such that
\begin{equation}\label{l5-1}
\Big\|s+2\mu\sum_{i=1}^l[\tg_i(\xe,\ze)]_+\nabla_z \tg_i(\xe,\ze)\Big\|\leq\rho^{-1}\epsilon.
\end{equation}
Let $\hz_\xe$ and $G$ be given in Assumption \ref{a2}(iii). It then follows that $\hz_\xe\in\mcY$ and $-\tg_i(\xe,\hz_\xe)\geq G >0$ for all $i$. Notice that  $[\tg_i(\xe,\ze)]_+ \tg_i(\xe,\ze)\geq 0$ for all $i$ and $\|\ze-\hz_\xe\|\leq D_\bfy$ due to \eqref{DxDy}. Using these,  \eqref{l5-1}, and the convexity of $\tf(\xe,\cdot)$ and $\tg_i(\xe,\cdot)$ for all $i$, we have
\begin{align}
& \tf(\xe,\ze)-\tf(\xe,\hat z_\xe)+2\mu G\sum_{i=1}^l[\tg_i(\xe,\ze)]_+\leq
\tf(\xe,\ze)-\tf(\xe,\hat z_\xe)-2\mu\sum_{i=1}^l[\tg_i(\xe,\ze)]_+\tg_i(\xe,\hat z_\xe)\nn \\
& \leq \tf(\xe,\ze)-\tf(\xe,\hat z_\xe)+2\mu\sum_{i=1}^l[\tg_i(\xe,\ze)]_+(\tg_i(\xe,\ze)-\tg_i(\xe,\hat z_\xe))\nn \\
& \leq \langle s, \ze-\hat z_\xe\rangle+2\mu\sum_{i=1}^l[\tg_i(\xe,\ze)]_+\langle\nabla_z\tg_i(\xe,\ze),\ze-\hat z_\xe\rangle\nn \\
& = \langle s+2\mu\sum_{i=1}^l[\tg(\xe,\ze)]_+\nabla_z \tg_i(\xe,\ze),\ze-\hat z_\xe\rangle\leq\rho^{-1}D_\bfy\epsilon, \label{bound-ineq}
\end{align}
where the first inequality is due to $-\tg_i(\xe,\hz_\xe)\geq G$ for all $i$, the second inequality follows from $[\tg_i(\xe,\ze)]_+ \tg_i(\xe,\ze)\geq 0$ for all $i$, the third inequality is due to $s\in\partial_z\tf(\xe,\ze)$ and the convexity of $\tf(\xe,\cdot)$ and $\tg_i(\xe,\cdot)$ for all $i$, and the last inequality follows from \eqref{DxDy} and \eqref{l5-1}. In view of \eqref{DxDy}, \eqref{bound-ineq}, and $L_\tf$-Lipschitz continuity of $\tf(x,y)$ (see Assumption \ref{a2}), one has
\begin{align*}
\left\|[\tg(\xe,\ze)]_+\right\|\leq \sum_{i=1}^l[\tg_i(\xe,\ze)]_+\overset{\eqref{bound-ineq}}{\leq}&\ (2\mu G)^{-1}(\rho^{-1} D_\bfy\epsilon+\tf(\xe,\hat z_\xe)-\tf(\xe,\ze))\\ 
\leq &\ (2\mu G)^{-1}(\rho^{-1}D_\bfy\epsilon+L_{\tf}\|\hat z_\xe-\ze\|)
\overset{\eqref{DxDy}}{\leq}  (2\mu G)^{-1}D_\bfy(\rho^{-1}\epsilon+L_{\tf}).
\end{align*}
Hence, \eqref{gxz} holds.

We next prove \eqref{gxy}. Since $(\xe,\ye,\ze)$ is an $\epsilon$-stationary point of \eqref{mmax}, it follows from Definition \ref{def2} that 
$\dist(0,\partial_{x,y} P_{\rho,\mu}(\xe,\ye,\ze))\leq\epsilon$. 
In addition, notice from \eqref{Prhomu-ref} that $P_{\rho,\mu}$ is the sum of a smooth function and a possibly nonsmooth function that is separable with respect to $x$, $y$ and $z$.  Consequently, $\partial_{x,y}P_{\rho,\mu}=\partial_xP_{\rho,\mu}\times\partial_yP_{\rho,\mu}$, which together with $\dist(0,\partial_{x,y} P_{\rho,\mu}(\xe,\ye,\ze))\leq\epsilon$ implies that 
$\dist(0,\partial_yP_{\rho,\mu}(\xe,\ye,\ze))\leq\epsilon$. By this relation and \eqref{Prhomu-ref}, one has
\begin{align*}
\dist\big(0,\ \partial_y f(\xe,\ye)+\rho\partial_y\tf(\xe,\ye)+2\rho\mu\nabla_y\tg(\xe,\ye)[\tg(\xe,\ye)]_+\big)\leq\epsilon.
\end{align*}
Hence, there exists $s\in\partial_y f(\xe,\ye)$ and $\tilde s\in\partial_y\tf(\xe,\ye)$ such that
\begin{equation}\label{l11-1}
\left\|s+\rho\tilde s+2\rho\mu\nabla_y\tg(\xe,\ye)[\tg(\xe,\ye)]_+\right\|\leq\epsilon.
\end{equation}
Let $\bar\cA(\xe,\ye)=\{i|\tg_i(\xe,\ye)>0,1\leq i \leq l\}$, $\hz_\xe$ and $G$ be given in Assumption \ref{a2}(iii). It then follows that $\hz_\xe\in\mcY$ and $-\tg_i(\xe,\hz_\xe)\geq G >0$ for all $i$. Using these and the convexity of $\tg_i(\xe,\cdot)$ for all $i$, we have
\begin{align}
&\langle\nabla_y\tg(\xe,\ye)[\tg(\xe,\ye)]_+,\ye-\hz_\xe\rangle=\sum_{i\in\bar\cA(\xe,\ye)}\langle \nabla_y\tg_i(\xe,\ye),\ye-\hz_\xe\rangle [g_i(\xe,\ye)]_+ \nn\\
&\geq\sum_{i\in\bar\cA(\xe,\ye)}(\tg_i(\xe,\ye)-\tg_i(\xe,\hz_\xe))[\tg_i(\xe,\ye)]_+\nn\\
&\geq\sum_{i\in\bar\cA(\xe,\ye)}G[\tg_i(\xe,\ye)]_+=G\sum_{i=1}^l[\tg_i(\xe,\ye)]_+\geq G\left\|[\tg(\xe,\ye)]_+\right\|,\label{l11-2}
\end{align}
where the first inequality follows from the convexity of $\tg(\xe,\cdot)$ and the second inequality is due to $-\tg_i(\xe,\hz_\xe)\geq G$. 
It then follows from this, \eqref{l11-1} and \eqref{l11-2} that
\begin{align}
 D_\bfy\epsilon&\ \geq\left\|s+\rho\tilde s+2\rho\mu\nabla_y\tg(\xe,\ye)[\tg(\xe,\ye)]_+\right\|\cdot\|\ye-\hz_\xe\|\nn \\
&\ \geq\langle s+\rho\tilde s+2\rho\mu\nabla_y\tg(\xe,\ye)[\tg(\xe,\ye)]_+,\ye-\hz_\xe\rangle\nn\\
&\ =\langle s+\rho\tilde s,\ye-\hz_\xe\rangle+2\rho\mu\langle\nabla_y\tg(\xe,\ye)[\tg(\xe,\ye)]_+,\ye-\hz_\xe\rangle\nn \\
&\overset{\eqref{l11-2}}{\geq}-\left(\|s\|+\rho\|\tilde s\|\right)\|\ye-\hz_\xe\|+2\rho\mu G\left\|[\tg(\xe,\ye)]_+\right\|\nn \\
&\ \geq-(L_f+\rho L_\tf) D_\bfy+2\rho\mu G\left\|[\tg(\xe,\ye)]_+\right\|, \label{tg-ineq}
\end{align}
where the last inequality follows from  $\|\ye-\hz_\xe\|\leq D_\bfy$ and the fact that $\|s\| \leq L_f$ and $\|\tilde s\| \leq L_\tf$, which are due to \eqref{DxDy}, $s\in\partial_y f(\xe,\ye)$, $\tilde s\in\partial_y\tf(\xe,\ye)$ and Assumption \ref{a2}(i). 
By \eqref{tg-ineq}, one can immediately see that \eqref{gxy} holds.
\end{proof}

\begin{lemma}\label{t4}
Suppose that Assumptions \ref{a1} and \ref{a2} hold. Let $f$, $\tf$, $\tg$, $D_\bfy$, $f_{\rm low}$, $\tf^*$ and $P_{\rho,\mu}$ be given in \eqref{unc-prob}, \eqref{DxDy}, \eqref{lower-bnd}, \eqref{tfstarx} and \eqref{mmax}, $L_f$, $L_\tf$ and $G$ be given in Assumptions \ref{a1} and \ref{a2}, $(\xe,\ye,\ze)$ be an $\epsilon$-stationary point of \eqref{mmax} for some $\epsilon>0$, and
\begin{equation}
\tilde\lambda=2\mu[\tg(\xe,\ze)]_+, \quad \hat\lambda=2\rho\mu[\tg(\xe,\ye)]_+. \label{def-tlam}
\end{equation}
Then we have
\begin{align}
&\dist\left(\partial f(\xe,\ye)+\rho\partial\tf(\xe,\ye)-\rho(\nabla_x\tf(\xe,\ze)+\nabla_x\tg(\xe,\ze)\tilde\lambda;0)+\nabla\tg(\xe,\ye)\hat \lambda\right)\leq\epsilon, \label{rel1}\\
&\dist\left(0,\rho(\partial_z\tf(\xe,\ze)+\nabla_z\tg(\xe,\ze)\tilde\lambda)\right)\leq\epsilon,\label{rel2}\\
&\left\|[\tg(\xe,\ze)]_+\right\|\leq (2\mu G)^{-1}D_\bfy(\rho^{-1}\epsilon+L_{\tf}),\label{rel3}\\
&|\langle\tilde\lambda,\tg(\xe,\ze)\rangle|\leq(2\mu)^{-1}G^{-2}D_\bfy^2(\rho^{-1}\epsilon+L_\tf)^2,\label{rel4}\\
&|\tf(\xe,\ye)-\tf^*(\xe)|\leq\max\big\{\rho^{-1}(\max_zP_{\rho,\mu}(\xe,\ye,z)-f_{\rm low}),(2\mu)^{-1} G^{-2}D_\bfy^2L_\tf(\rho^{-1}\epsilon+\rho^{-1}L_f+L_\tf) \big\},\label{rel5}\\
&\left\|[\tg(\xe,\ye)]_+\right\|\leq(2\mu G)^{-1}D_\bfy(\rho^{-1}\epsilon+\rho^{-1}L_f+L_\tf),\label{rel6}\\
&|\langle\hat\lambda,\tg(\xe,\ye)\rangle|\leq(2\mu)^{-1}\rho G^{-2}D_\bfy^2(\rho^{-1}\epsilon+\rho^{-1}L_f+L_\tf)^2.\label{rel7}
\end{align}
\end{lemma}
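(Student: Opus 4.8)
The plan is to read off all seven relations from the $\epsilon$-stationarity of $(\xe,\ye,\ze)$ for $\min_{x,y}\max_z P_{\rho,\mu}(x,y,z)$, combined with the explicit form \eqref{Prhomu-ref}--\eqref{P-rhomu} of $P_{\rho,\mu}$ and the conclusions of the preceding lemma. First I would invoke Definition~\ref{def2}: $\dist(0,\partial_{x,y}P_{\rho,\mu}(\xe,\ye,\ze))\le\epsilon$ and $\dist(0,\partial_z P_{\rho,\mu}(\xe,\ye,\ze))\le\epsilon$. Since in \eqref{Prhomu-ref} the nonsmooth part of $P_{\rho,\mu}$ is separable across $x,y,z$ and $v\mapsto\|[v]_+\|^2$ is continuously differentiable with gradient $2[v]_+$, the chain rule gives
\begin{align*}
\partial_{x,y}P_{\rho,\mu}(\xe,\ye,\ze)&=\partial f(\xe,\ye)+\rho\partial\tf(\xe,\ye)+2\rho\mu\,\nabla\tg(\xe,\ye)[\tg(\xe,\ye)]_+\\
&\quad -\rho\big(\nabla_x\tf(\xe,\ze)+2\mu\,\nabla_x\tg(\xe,\ze)[\tg(\xe,\ze)]_+;\,0\big),\\
\partial_z P_{\rho,\mu}(\xe,\ye,\ze)&=-\rho\big(\partial_z\tf(\xe,\ze)+2\mu\,\nabla_z\tg(\xe,\ze)[\tg(\xe,\ze)]_+\big).
\end{align*}
Substituting $\tlambda=2\mu[\tg(\xe,\ze)]_+$ and $\hat\lambda=2\rho\mu[\tg(\xe,\ye)]_+$ from \eqref{def-tlam} turns the first set into exactly the one appearing in \eqref{rel1}, proving \eqref{rel1}; and, since the distance of the origin from a set is unchanged under negating the set, the second identity proves \eqref{rel2}.

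Next, \eqref{rel3} and \eqref{rel6} are literally \eqref{gxz} and \eqref{gxy} from the preceding lemma, so there is nothing to do. For \eqref{rel4} I would use $[t]_+t=([t]_+)^2$ componentwise to obtain $\langle\tlambda,\tg(\xe,\ze)\rangle=2\mu\|[\tg(\xe,\ze)]_+\|^2\ge0$, and then bound $\|[\tg(\xe,\ze)]_+\|$ by \eqref{rel3}; the constant collapses via $2\mu\cdot(2\mu)^{-2}=(2\mu)^{-1}$ to the right-hand side of \eqref{rel4}. The same computation with $\hat\lambda$ in place of $\tlambda$ and \eqref{rel6} in place of \eqref{rel3} yields \eqref{rel7}.

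It remains to prove the objective-gap estimate \eqref{rel5}, for which I need a two-sided bound on $\tf(\xe,\ye)-\tf^*(\xe)$. For the upper bound, \eqref{P-rhomu} together with $\tP_\mu(\xe,\ye)\ge\tf(\xe,\ye)$, \eqref{p-ineq} and $f(\xe,\ye)\ge f_{\rm low}$ gives $\max_z P_{\rho,\mu}(\xe,\ye,z)=f(\xe,\ye)+\rho\tP_\mu(\xe,\ye)-\rho\min_z\tP_\mu(\xe,z)\ge f_{\rm low}+\rho\tf(\xe,\ye)-\rho\tf^*(\xe)$, hence $\tf(\xe,\ye)-\tf^*(\xe)\le\rho^{-1}(\max_z P_{\rho,\mu}(\xe,\ye,z)-f_{\rm low})$. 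For the lower bound --- the main obstacle --- I would use the uniform Slater point $\hz_\xe$ of Assumption~\ref{a2}(iii), which satisfies $-\tg_i(\xe,\hz_\xe)\ge G$ for all $i$, to repair the residual constraint violation of $\ye$. Set $\delta:=(2\mu G)^{-1}D_\bfy(\rho^{-1}\epsilon+\rho^{-1}L_f+L_\tf)$, so that $[\tg_i(\xe,\ye)]_+\le\delta$ for all $i$ by \eqref{rel6}, and put $z_t:=(1-t)\ye+t\hz_\xe\in\mcY$ for $t\in[0,1]$. Convexity of $\tg_i(\xe,\cdot)$ gives $\tg_i(\xe,z_t)\le(1-t)\delta-tG$, which is $\le0$ as soon as $t\ge\delta/(\delta+G)$; taking $t=\delta/(\delta+G)$ makes $z_t$ feasible for the minimization defining $\tf^*(\xe)$ while $\|z_t-\ye\|=t\|\hz_\xe-\ye\|\le tD_\bfy\le\delta D_\bfy/G$, so by $L_\tf$-Lipschitz continuity of $\tf(\xe,\cdot)$,
\[
\tf^*(\xe)-\tf(\xe,\ye)\le\tf(\xe,z_t)-\tf(\xe,\ye)\le L_\tf\|z_t-\ye\|\le L_\tf\delta D_\bfy/G=(2\mu)^{-1}G^{-2}D_\bfy^2L_\tf(\rho^{-1}\epsilon+\rho^{-1}L_f+L_\tf).
\]
Taking the maximum of the two one-sided bounds gives \eqref{rel5}.

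The only genuinely non-routine ingredient I anticipate is this constraint-repair step for the lower bound in \eqref{rel5}: producing a feasible point near $\ye$ with distance controlled by the uniform Slater gap $G$. Everything else is bookkeeping with the explicit expression for $P_{\rho,\mu}$ and its partial subdifferentials, plus the already-established bounds \eqref{gxz}--\eqref{gxy}.
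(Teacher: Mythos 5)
Your proposal is correct, and for six of the seven relations it follows the paper's proof essentially verbatim: \eqref{rel1}--\eqref{rel2} from $\epsilon$-stationarity plus the separable smooth-plus-nonsmooth structure of $P_{\rho,\mu}$ and the substitution \eqref{def-tlam}; \eqref{rel3} and \eqref{rel6} quoted from \eqref{gxz}--\eqref{gxy}; \eqref{rel4} and \eqref{rel7} by the identity $\langle[v]_+,v\rangle=\|[v]_+\|^2$; and the upper half of \eqref{rel5} by dropping $\mu\|[\tg(\xe,\ye)]_+\|^2\ge 0$, using \eqref{p-ineq} and $f\ge f_{\rm low}$, exactly as in the paper. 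The one place you diverge is the lower bound $\tf^*(\xe)-\tf(\xe,\ye)\le (2\mu)^{-1}G^{-2}D_\bfy^2L_\tf(\rho^{-1}\epsilon+\rho^{-1}L_f+L_\tf)$: the paper obtains it dually, taking an optimal multiplier $\lambda^*$ of \eqref{tfstarx} at $x=\xe$, invoking the bound $\|\lambda^*\|\le G^{-1}L_\tf D_\bfy$ from Lemma~\ref{dual-bnd}(i), and estimating $\tf^*(\xe)\le\tf(\xe,\ye)+\|\lambda^*\|\,\|[\tg(\xe,\ye)]_+\|$; you instead argue primally, moving $\ye$ toward the Slater point $\hat z_{\xe}$ by $t=\delta/(\delta+G)$ to manufacture a feasible point within distance $\delta D_\bfy/G$ and then using the $L_\tf$-Lipschitz continuity of $\tf(\xe,\cdot)$. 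Both arguments rest on the same ingredients (the uniform Slater gap $G$, $D_\bfy$, and Lipschitzness of $\tf$), and it is no accident that they produce the identical constant, since the paper's multiplier bound in Lemma~\ref{dual-bnd}(i) is itself derived from the Slater point; your version is slightly more self-contained in that it bypasses strong duality and the existence of optimal multipliers, whereas the paper's version reuses a lemma it needs elsewhere anyway. Your construction implicitly uses that $z_t\in\mcY$, which is fine because $\mcY=\dom\,\tf_2$ is convex, and the Lipschitz bound is applied to two points of $\mcX\times\mcY$, so no gap there.
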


\begin{proof}
Since $(\xe,\ye,\ze)$ is an $\epsilon$-stationary point of \eqref{mmax}, it easily follows from \eqref{P-rhomu}, \eqref{def-tlam} and Definition \ref{def2} that 
\eqref{rel1} and \eqref{rel2} hold.
Also, it follows from \eqref{gxz} and \eqref{gxy} that \eqref{rel3} and \eqref{rel6} hold. In addition, in view of \eqref{def-tlam}, \eqref{rel3} and \eqref{rel6}, one has 
\begin{align*}
&|\langle\tilde\lambda,\tg(\xe,\ze)\rangle|\overset{\eqref{def-tlam}}{=}2\mu\left\|[\tg(\xe,\ze)]_+\right\|^2\overset{\eqref{rel3}}{\leq}(2\mu)^{-1}G^{-2}D_\bfy^2(\rho^{-1}\epsilon+L_\tf)^2,\\
&|\langle\hat\lambda,\tg(\xe,\ye)\rangle|\overset{\eqref{def-tlam}}{=}2\rho\mu\left\|[\tg(\xe,\ye)]\|_+\right\|^2\overset{\eqref{rel6}}{\leq}(2\mu)^{-1}\rho G^{-2}D_\bfy^2(\rho^{-1}\epsilon+\rho^{-1}L_f+L_\tf)^2,
\end{align*}
and hence \eqref{rel4} and \eqref{rel7} hold. 
Also, observe from the definition of $P_{\rho,\mu}$ in \eqref{mmax} that
\[
\tP_\mu(\xe,\ye)-\min_{z}\tP_\mu(\xe,z) = \rho^{-1}(\max_zP_{\rho,\mu}(\xe,\ye,z)-f(\xe,\ye)).
\]
Using this, \eqref{lower-bnd},  \eqref{def-tFmu} and \eqref{p-ineq}, we obtain that
\begin{align}
\tf(\xe,\ye)+\mu\left\|[\tg(\xe,\ye)]_+\right\|^2\overset{\eqref{def-tFmu}}{=}\tP_\mu(\xe,\ye) =\ \ \ &\min_{z}\tP_\mu(\xe,z)+\rho^{-1}(\max_zP_{\rho,\mu}(\xe,\ye,z)-f(\xe,\ye))\nn\\
\overset{\eqref{lower-bnd} \eqref{p-ineq}}{\leq}&\tf^*(\xe)+\rho^{-1}(\max_zP_{\rho,\mu}(\xe,\ye,z)-f_{\rm low}).\label{l11-ineq}
\end{align}
On the other hand, let $\lambda^*\in\bR^l_+$ be an optimal Lagrangian multiplier of problem \eqref{tfstarx} with $x=\xe$. It then follows from Lemma \ref{dual-bnd}(i) that  $\|\lambda^*\|\leq G^{-1}L_\tf D_\bfy$. Using these and \eqref{rel6}, we have
\begin{align*}
\tf^*(\xe)=\ &\min_y \left\{\tf(\xe,y)+\langle \lambda^*, \tg(\xe,y)\rangle\right\}\leq\tf(\xe,\ye)+\langle \lambda^*, \tg(\xe,\ye)\rangle \nn\\ 
\leq\ &\tf(\xe,\ye)+ \|\lambda^*\| \|[\tg(\xe,\ye)]_+\|\leq\tf(\xe,\ye)+(2\mu)^{-1} G^{-2}D_\bfy^2L_\tf(\rho^{-1}\epsilon+\rho^{-1}L_f+L_\tf).
\end{align*}
By this and \eqref{l11-ineq}, one can see that \eqref{rel5}  holds.
\end{proof}

We are now ready to prove Theorem~\ref{complexity}.

\begin{proof}[\textbf{Proof of Theorem~\ref{complexity}}]
Observe from \eqref{Prhomu-ref} that problem \eqref{mmax} can be viewed as
\begin{equation*}
\min_{x,y}\max_z\left\{P_{\rho,\mu}(x,y,z) = \bh(x,y,z)+p(x,y)-q(z)\right\},
\end{equation*}
where $\bh(x,y,z) = f_1(x,y)+\rho\tf_1(x,y)+\rho\mu\left\|[\tg(x,y)]_+\right\|^2-\rho\tf_1(x,z)-\rho\mu\left\|[\tg(x,z)]_+\right\|^2$, $p(x,y)=f_2(x)+\rho \tf_2(y)$ and $q(z)=\rho\tf_2(z)$. Hence, problem \eqref{mmax} is in the form of \eqref{ap-prob} with $H=P_{\rho,\mu}$. By Assumption \ref{a1}, \eqref{dg-bnd}, \eqref{tg-bnd}, $\rho=\varepsilon^{-1}$ and $\mu=\varepsilon^{-2}$, one can see that $\bh$ is $\tL$-smooth on its domain, where $\tL$ is given in \eqref{tL}. Also, notice from Algorithm \ref{alg4} that $\epsilon_0=\varepsilon^{5/2}\leq \varepsilon/2=\epsilon/2$ due to $\varepsilon\in(0,1/4]$. Consequently, Algorithm~\ref{mmax-alg2} can be suitably applied to problem \eqref{mmax} with $\rho=\varepsilon^{-1}$ and $\mu=\varepsilon^{-2}$ for finding an $\epsilon$-stationary point $(\xe,\ye,\ze)$ of it.

In addition, notice from Algorithm \ref{alg4} that $\tP_\mu(x^0,y^0)\leq\min_y\tP_{\mu}(x^0,y)+\varepsilon$. Using this, \eqref{mmax} and $\rho=\varepsilon^{-1}$, we obtain
\begin{equation}\label{t8-3}
\max_z P_{\rho,\mu}(x^0,y^0,z)\overset{\eqref{mmax}}{=}f(x^0,y^0)+\rho(\tP_\mu(x^0,y^0)-\min_z\tP_\mu(x^0,z))\leq f(x^0,y^0)+\rho\varepsilon= f(x^0,y^0)+1.
\end{equation}
By this and \eqref{upperbnd} with $\bH=P_{\rho,\mu}$, $\epsilon=\varepsilon$, $\epsilon_0=\varepsilon^{5/2}$, $\hat x^0=(x^0,y^0)$, $D_q=D_\bfy$ and $L_{\nabla \bh}=\tL$,  one has
\begin{align*}
\max_zP_{\rho,\mu}(\xe,\ye,z)\leq\ &\  \max_zP_{\rho,\mu}(x^0,y^0,z)+\varepsilon D_\bfy/4+2\varepsilon^5(\tL^{-1}+4D_\bfy^2\tL\varepsilon^{-2})\notag\\
\overset{\eqref{t8-3}}{\leq}&\ 1+f(x^0,y^0)+\varepsilon D_\bfy/4+2\varepsilon^5(\tL^{-1}+4D_\bfy^2\tL\varepsilon^{-2}).
\end{align*}
It then follows from this and Lemma~\ref{t4} with $\epsilon=\varepsilon$, $\rho=\varepsilon^{-1}$ and $\mu=\varepsilon^{-2}$ that $(\xe,\ye,\ze)$ satisfies the relations \eqref{gap1}-\eqref{gap7}.

We next show that at most $\widetilde N$ evaluations of $\nabla f_1$, $\nabla\tf_1$, $\nabla \tg$ and proximal operator of $f_2$ and $\tf_2$ are respectively performed in Algorithm~\ref{alg4}. Indeed, by \eqref{tfbnd}, \eqref{lower-bnd}, \eqref{dg-bnd}, \eqref{def-tFmu} and \eqref{mmax}, one has
\begin{align}
&\min_{x,y}\max_z P_{\rho,\mu}(x,y,z) \overset{\eqref{mmax}}{=}\min_{x,y}\{f(x,y)+\rho(\tP_\mu(x,y)-\min_z\tP_\mu(x,z))\}\geq\min_{(x,y)\in\mcX\times\mcY}f(x,y)\overset{\eqref{lower-bnd}}{=} f_{\rm low},\label{t8-1}\\
&\min \{P_{\rho,\mu}(x,y,z)|(x,y,z)\in\mcX\times\mcY\times\mcY\}\overset{\eqref{mmax}}{=}\min\{f(x,y)+\rho(\tP_\mu(x,y)-\tP_\mu(x,z))|(x,y,z)\in\mcX\times\mcY\times\mcY\}\notag \\
&\overset{\eqref{def-tFmu}}{=}\min\{f(x,y)+\rho(\tf(x,y)+\mu \|[\tg(x,y)]_+\|^2-\tf(x,z)-\mu \|[\tg(x,z)]_+\|^2)|(x,y,z)\in\mcX\times\mcY\times\mcY\} \notag \\
&\geq f_{\rm low}+\rho(\tf_{\rm low}-\tf_{\rm hi})-\rho\mu\tg_{\rm hi}^2,\label{t8-2}
\end{align}
where the last inequality follows from \eqref{tfbnd}, \eqref{lower-bnd} and \eqref{dg-bnd}. In addition, let $(x^*,y^*)$ be an optimal solution of \eqref{prob}. It then follows that $f(x^*,y^*)=f^*$ and $[\tg(x^*,y^*)]_+= 0$. By these, \eqref{tfbnd}, \eqref{def-tFmu} and \eqref{mmax}, one has 
\begin{align}
\min_{x,y}\max_z P_{\rho,\mu}(x,y,z) \leq &\  \max_{z}P_{\rho,\mu}(x^*,y^*,z)\overset{\eqref{mmax}}{=}f(x^*,y^*)+\rho\left(\tP_\mu(x^*,y^*)-\min_z\tP_\mu(x^*,z)\right)\notag\\
\overset{\eqref{def-tFmu}}{=}&\ f(x^*,y^*)+\rho(\tf(x^*,y^*)+\mu\|[\tg(x^*,y^*)]_+\|^2-\min_z\{\tf(x^*,z)+\mu\|[\tg(x^*,z)]_+\|^2\})\notag\\
\overset{\eqref{tfbnd}}\leq&\ f^*+\rho(\tf_{\rm hi}-\tf_{\rm low}).\label{t8-0}
\end{align}
For convenience of the rest proof, let
\beq
\bH=P_{\rho,\mu},\quad \bH^*=\min_{x,y}\max_z P_{\rho,\mu}(x,y,z),\quad\bH_{\rm low}=\min\{P_{\rho,\mu}(x,y,z)|(x,y,z)\in\mcX\times\mcY\times\mcY\}.\label{special-bHrhomu}
\eeq
In view of these, \eqref{t8-3}, \eqref{t8-1}, \eqref{t8-2}, \eqref{t8-0}, $\rho=\varepsilon^{-1}$ and $\mu=\varepsilon^{-2}$, we obtain that 
\begin{align*}
&\max_z\bH(x^0,y^0, z)\overset{\eqref{t8-3}}{\leq} f(x^0,y^0)+1,\quad f_{\rm low}\overset{\eqref{t8-1}}{\leq}\bH^*\overset{\eqref{t8-0}}{\leq} f^*+\rho(\tf_{\rm hi}-\tf_{\rm low})=f^*+\varepsilon^{-1}(\tf_{\rm hi}-\tf_{\rm low}),\\
&\bH_{\rm low}\overset{\eqref{t8-2}}\geq f_{\rm low}+\rho(\tf_{\rm low}-\tf_{\rm hi})-\rho\mu\tg_{\rm hi}^2=f_{\rm low}+\varepsilon^{-1}(\tf_{\rm low}-\tf_{\rm hi})-\varepsilon^{-3}\tg_{\rm hi}^2.
\end{align*}
Using these and Theorem~\ref{mmax-thm} with $\epsilon=\varepsilon$, $\hat x^0=(x^0,y^0)$, $D_p=\sqrt{D_\bfx^2+D_\bfy^2}$, $D_q=D_\bfy$, $\epsilon_0=\varepsilon^{5/2}$, $L_{\nabla\bh}=\tL$, $\alpha=\tilde\alpha$, $\Cr=\tilde\Cr$, and $\bH$, $\bH^*$, $\bH_{\rm low}$ given in \eqref{special-bHrhomu}, we can conclude that Algorithm~\ref{alg4} performs at most $\widetilde N$ evaluations of $\nabla f_1$, $\nabla\tf_1$, $\nabla \tg$ and proximal operator of $f_2$ and $\tf_2$ for finding an approximate solution $(\xe,\ye)$ of problem \eqref{prob} satisfying \eqref{gap1}-\eqref{gap7}.
\end{proof}

\section{Concluding remarks}\label{sec:conclude}

For the sake of simplicity, first-order penalty methods are  proposed only for problem \eqref{BLO-1} in this paper. It would be interesting to extend them to problem \eqref{BLO} by using a standard technique (e.g., see \cite{nocedal1999numerical}) for handling the constraint $g(x,y) \leq 0$. This will be left for the future research. 

%

\appendix
\section{A first-order method for nonconvex-concave minimax problem}

In this part, we aim to find an $\epsilon$-stationary point of the nonconvex-concave minimax problem
\begin{equation}\label{ap-prob}
\bH^* = \min_x\max_y\left\{\bH(x,y)\coloneqq \bh(x,y)+p(x)-q(y)\right\},
\end{equation}
which has at least one optimal solution and satisfies the following assumptions.
\begin{assumption}\label{mmax-a}
\begin{enumerate}[label=(\roman*)]
\item $p:\bR^n\to\bR\cup\{\infty\}$ and $q:\bR^m\to\bR\cup\{\infty\}$ are proper convex functions and continuous on $\dom\,p$ and $\dom\,q$, respectively, and moreover, $\dom\,p$ and $\dom\,q$ are compact.
\item The proximal operators associated with $p$ and $q$ can be exactly evaluated.
\item $\bh$ is $L_{\nabla\bh}$-smooth on $\dom\,p\times\dom\,q$, and moreover, $\bh(x,\cdot)$ is concave for any $x\in\dom\,p$.
\end{enumerate}
\end{assumption}

Recently, an accelerated inexact proximal point smoothing (AIPP-S) scheme was proposed in \cite{kong2021accelerated} for finding an approximate stationary point of a class of minimax composite nonconvex optimization problems, which includes \eqref{ap-prob} as a special case. When applied to \eqref{ap-prob},  AIPP-S requires the exact solution of $\max_y \left\{\bh(x',y)-q(y)-\frac{1}{2\lambda}\|y-y'\|^2\right\}$ for any $\lambda>0$, $x'\in\bR^n$, and $y'\in\bR^m$. However,  $\bh$ is typically sophisticated and the \emph{exact} solution of such problem usually cannot be found. Consequently, AIPP-S is generally not implementable for \eqref{ap-prob}. In addition,  a first-order method was proposed in \cite{zhao2020primal} which enjoys a first-order oracle complexity of 
$\cO(\varepsilon^{-3}\log\varepsilon^{-1})$ for finding an $\epsilon$-primal stationary point $x'$ of \eqref{ap-prob} that satisfies 
\[
 \Big\|\lambda^{-1}(x'- \argmin_{x} \Big\{\max_y H(x,y)+\frac{1}{2\lambda}\|x-x'\|^2\Big\} \Big\|\leq \epsilon
\]
for some $0<\lambda<L_{\nabla\bh}^{-1}$. Yet, this method does not suit our needs since our aim is to find an $\epsilon$-stationary point of \eqref{ap-prob}  introduced in Definition \ref{def2}. In what follows, we present a first-order method proposed in \cite[Algorithm 2]{lu2023first} for finding such an $\epsilon$-stationary point of \eqref{ap-prob}.

For ease of presentation, we define
\begin{align}
&D_p=\max\{\|u-v\|\big|u,v\in\dom\,p\},\quad D_q=\max\{\|u-v\|\big|u,v\in\dom\,q\},\label{ap-D}\\
&H_{\rm low}=\min\{H(x,y)|(x,y)\in\dom\, p\times\dom\, q\}.\label{ap-H}
\end{align}

Given an iterate $(x^k,y^k)$, the first-order method \cite[Algorithm 2]{lu2023first} finds the next iterate $(x^{k+1},y^{k+1})$ by applying \cite[Algorithm 1]{lu2023first}, which is a slight modification of a novel optimal first-order method \cite[Algorithm 4]{kovalev2022first},  to the strongly-convex-strongly-concave minimax problem
\beq \label{hk}
\min_x\max_y\left\{\bh_k(x,y)=\bh(x,y)-\epsilon\|y-y^0\|^2/(4D_q)+L_{\nabla \bh}\|x-x^k\|^2\right\}.
\eeq

For ease of reference, we next present a modified optimal first-order method \cite[Algorithm 1]{lu2023first} in Algorithm \ref{mmax-alg1} below for solving the strongly-convex-strongly-concave minimax problem
\begin{equation}\label{ea-prob}
\min_{x}\max_{y}\left\{ \h(x,y)+p(x)-q(y)\right\},
\end{equation}
where 
$\h(x,y)$ is $\sigma_x$-strongly-convex-$\sigma_y$-strongly-concave and $L_{\nabla\h}$-smooth on $\dom\,p\times\dom\,q$ for some $\sigma_x,\sigma_y>0$. In Algorithm \ref{mmax-alg1},  the functions $\hat h$, $a^k_x$ and $a^k_y$ are defined as follows:
\begin{align*}
&\hat h(x,y)=\h(x,y)-\sigma_x\|x\|^2/2+\sigma_y\|y\|^2/2,\\
&a^k_x(x,y)=\nabla_x\hat h(x,y)+\sigma_x(x-\sigma_x^{-1}z^k_g)/2,\quad a^k_y(x,y)=-\nabla_y\hat h(x,y)+\sigma_y y+\sigma_x(y-y^k_g)/8,
\end{align*}
where $y^k_g$ and $z^k_g$ are generated at iteration $k$ of Algorithm \ref{mmax-alg1} below. 

\begin{algorithm}[H]
\caption{A modified optimal first-order method for problem \eqref{ea-prob}}
\label{mmax-alg1}
\begin{algorithmic}[1]
\REQUIRE $\tau>0$, $\bar z^0=z^0_f\in-\sigma_x\dom\,p$,\footnote{} $\bar y^0=y^0_f\in\dom\,q$, $(z^0,y^0)=(\bar z^0, \bar y^0)$,  $\bar \alpha=\min\left\{1,\sqrt{8\sigma_y/\sigma_x}\right\}$, $\eta_z=\sigma_x/2$, $\eta_y=\min\left\{1/(2\sigma_y),4/(\bar \alpha\sigma_x)\right\}$, $\beta_t=2/(t+3)$, $\zeta=\left(2\sqrt{5}(1+8L_{\nabla\h}/\sigma_x)\right)^{-1}$, $\gamma_x=\gamma_y=8\sigma_x^{-1}$, and $\hat\zeta=\min\{\sigma_x,\sigma_y\}/L_{\nabla \h}^2$.
\FOR{$k=0,1,2,\ldots$}
\STATE $(z^k_g,y^k_g)=\bar \alpha(z^k,y^k)+(1-\bar \alpha)(z^k_f,y^k_f)$.
\STATE $(x^{k,-1},y^{k,-1})=(-\sigma_x^{-1}z^k_g,y^k_g)$.
\STATE $x^{k,0}=\prox_{\zeta\gamma_xp}(x^{k,-1}-\zeta\gamma_x a^k_x(x^{k,-1},y^{k,-1}))$.
\STATE $y^{k,0}=\prox_{\zeta\gamma_y q}(y^{k,-1}-\zeta\gamma_y a^k_y(x^{k,-1},y^{k,-1}))$.
\STATE $b^{k,0}_x=\frac{1}{\zeta\gamma_x}(x^{k,-1}-\zeta\gamma_x a^k_x(x^{k,-1},y^{k,-1})-x^{k,0})$.
\STATE $b^{k,0}_y=\frac{1}{\zeta\gamma_y}(y^{k,-1}-\zeta\gamma_y a^k_y(x^{k,-1},y^{k,-1})-y^{k,0})$.
\STATE $t=0$.
\WHILE{\\ $\gamma_x\|a^k_x(x^{k,t},y^{k,t})+b^{k,t}_x\|^2+\gamma_y\|a^k_y(x^{k,t},y^{k,t})+b^{k,t}_y\|^2>\gamma_x^{-1}\|x^{k,t}-x^{k,-1}\|^2+\gamma_y^{-1}\|y^{k,t}-y^{k,-1}\|^2$\\~~}
\STATE $x^{k,t+1/2}=x^{k,t}+\beta_t(x^{k,0}-x^{k,t})-\zeta\gamma_x(a^k_x(x^{k,t},y^{k,t})+b^{k,t}_x)$.
\STATE $y^{k,t+1/2}=y^{k,t}+\beta_t(y^{k,0}-y^{k,t})-\zeta\gamma_y(a^k_y(x^{k,t},y^{k,t})+b^{k,t}_y)$.
\STATE $x^{k,t+1}=\prox_{\zeta\gamma_x p}(x^{k,t}+\beta_t(x^{k,0}-x^{k,t})-\zeta\gamma_x a^k_x(x^{k,t+1/2},y^{k,t+1/2}))$.
\STATE $y^{k,t+1}=\prox_{\zeta\gamma_y q}(y^{k,t}+\beta_t(y^{k,0}-y^{k,t})-\zeta\gamma_y a^k_y(x^{k,t+1/2},y^{k,t+1/2}))$.
\STATE $b^{k,t+1}_x=\frac{1}{\zeta\gamma_x}(x^{k,t}+\beta_t(x^{k,0}-x^{k,t})-\zeta\gamma_x a^k_x(x^{k,t+1/2},y^{k,t+1/2})-x^{k,t+1})$.
\STATE $b^{k,t+1}_y=\frac{1}{\zeta\gamma_y}(y^{k,t}+\beta_t(y^{k,0}-y^{k,t})-\zeta\gamma_y a^k_y(x^{k,t+1/2},y^{k,t+1/2})-y^{k,t+1})$.
\STATE $t \leftarrow t+1$.
\ENDWHILE
\STATE $(x^{k+1}_f,y^{k+1}_f)=(x^{k,t},y^{k,t})$.
\STATE $(z^{k+1}_f,w^{k+1}_f)=(\nabla_x\hat h(x^{k+1}_f,y^{k+1}_f)+b^{k,t}_x,-\nabla_y\hat h(x^{k+1}_f,y^{k+1}_f)+b^{k,t}_y)$.
\STATE $z^{k+1}=z^k+\eta_z\sigma_x^{-1}(z^{k+1}_f-z^k)-\eta_z(x^{k+1}_f+\sigma_x^{-1}z^{k+1}_f)$.
\STATE $y^{k+1}=y^k+\eta_y\sigma_y(y^{k+1}_f-y^k)-\eta_y(w^{k+1}_f+\sigma_yy^{k+1}_f)$.
\STATE $x^{k+1}=-\sigma_x^{-1}z^{k+1}$.
\STATE $\tx^{k+1}=\prox_{\hat\zeta p}(x^{k+1}-\hat\zeta\nabla_x\h(x^{k+1},y^{k+1}))$.
\STATE $\ty^{k+1}=\prox_{\hat\zeta q}(y^{k+1}+\hat\zeta\nabla_y\h(x^{k+1},y^{k+1}))$.
\STATE Terminate the algorithm and output $(\tx^{k+1},\ty^{k+1})$ if
\begin{equation*}
\|\hat\zeta^{-1}(x^{k+1}-\tx^{k+1},\ty^{k+1}-y^{k+1})-(\nabla \h(x^{k+1},y^{k+1})-\nabla \h(\tx^{k+1},\ty^{k+1}))\|\leq\tau.
\end{equation*}
\ENDFOR
\end{algorithmic}							
\end{algorithm}
\footnotetext{For convenience, $-\sigma_x\dom\,p$ stands for the set $\{-\sigma_x u|u\in\dom\,p\}$.}

We are now ready to present the first-order method \cite[Algorithm 2]{lu2023first} for finding an $\epsilon$-stationary point of \eqref{ap-prob} in Algorithm \ref{mmax-alg2} below.


\begin{algorithm}[H]
\caption{A first-order method for problem~\eqref{ap-prob}}
\label{mmax-alg2}
\begin{algorithmic}[1]
\REQUIRE $\epsilon>0$, $\epsilon_0\in(0,\epsilon/2]$, $(\hat x^0,\hat y^0)\in\dom\,p\times\dom\,q$, $(x^0,y^0)=(\hat x^0,\hat y^0)$,  and $\epsilon_k=\epsilon_0/(k+1)$.
\FOR{$k=0,1,2,\ldots$}
\STATE Call Algorithm~\ref{mmax-alg1} with $\h\leftarrow \bh_k$, $\tau \leftarrow \epsilon_k$, $\sigma_x\leftarrow L_{\nabla \bh}$, $\sigma_y\leftarrow \epsilon/(2D_q)$, $L_{\nabla \h}\leftarrow 3L_{\nabla \bh}+\epsilon/(2D_q)$, $\bar z^0=z^0_f\leftarrow-\sigma_x x^k$, $\bar y^0=y^0_f\leftarrow y^k$, and denote its output by $(x^{k+1},y^{k+1})$, where $h_k$ is given in \eqref{hk}.
\STATE Terminate the algorithm and output $(\xe,\ye)=(x^{k+1},y^{k+1})$ if
\begin{equation*}
\|x^{k+1}-x^k\|\leq\epsilon/(4L_{\nabla \bh}).
\end{equation*}
\ENDFOR
\end{algorithmic}
\end{algorithm}


The following theorem presents the iteration complexity of Algorithm \ref{mmax-alg2}, whose proof is given in \cite[Theorem 2]{lu2023first}.

\begin{thm}[{\bf Complexity of Algorithm \ref{mmax-alg2}}]\label{mmax-thm}
Suppose that Assumption~\ref{mmax-a} holds. Let $\bH^*$, $H$ $D_p$, $D_q$, and $\bH_{\rm low}$ be defined in \eqref{ap-prob}, 
\eqref{ap-D} and \eqref{ap-H}, $L_{\nabla \bh}$ be given in Assumption \ref{mmax-a}, $\epsilon$, $\epsilon_0$ and $x^0$ be given in Algorithm~\ref{mmax-alg2}, and 
\begin{align*}
\alpha=&\ \min\left\{1,\sqrt{4\epsilon/(D_qL_{\nabla \bh})}\right\},\\
\delta=&\ (2+\alpha^{-1})L_{\nabla \bh} D_p^2+\max\left\{\epsilon/D_q,\alpha L_{\nabla \bh}/4\right\}D_q^2,\\
K=&\ \left\lceil16(\max_y\bH(x^0,y)-\bH^*+\epsilon D_q/4)L_{\nabla \bh}\epsilon^{-2}+32\epsilon_0^2(1+4D_q^2L_{\nabla \bh}^2\epsilon^{-2})\epsilon^{-2}-1\right\rceil_+,\\
N=&\ \left(\left\lceil96\sqrt{2}\left(1+\left(24L_{\nabla \bh}+4\epsilon/D_q\right)L_{\nabla \bh}^{-1}\right)\right\rceil+2\right)\Big\{2,\sqrt{D_qL_{\nabla \bh}\epsilon^{-1}}\Big\}\notag\\
&\ \times\Bigg((K+1)\Bigg(\log\frac{4\max\left\{\frac{1}{2L_{\nabla \bh}},\min\left\{\frac{D_q}{\epsilon},\frac{4}{\alpha L_{\nabla \bh}}\right\}\right\}\left(\delta+2\alpha^{-1}(\bH^*-\bH_{\rm low}+\epsilon D_q/4+L_{\nabla \bh} D_p^2)\right)}{\left[(3L_{\nabla \bh}+\epsilon/(2D_q))^2/\min\{L_{\nabla \bh},\epsilon/(2D_q)\}+ 3L_{\nabla \bh}+\epsilon/(2D_q)\right]^{-2}\epsilon_0^2}\Bigg)_+\notag\\
&\ +K+1+2K\log(K+1) \Bigg).
\end{align*}
Then Algorithm~\ref{mmax-alg2} terminates and outputs an $\epsilon$-stationary point $(\xe,\ye)$ of \eqref{ap-prob} in at most $K+1$ outer iterations that satisfies 
\begin{equation}\label{upperbnd}
\max_y\bH(\xe,y)\leq \max_y\bH(\hat x^0,y)+\epsilon D_q/4+2\epsilon_0^2\left(L_{\nabla \bh}^{-1}+4D_q^2L_{\nabla \bh}\epsilon^{-2}\right).
\end{equation}
Moreover, the total number of evaluations of $\nabla \bh$ and proximal operator of $p$ and $q$ performed in Algorithm~\ref{mmax-alg2} is no more than $N$, respectively.
\end{thm}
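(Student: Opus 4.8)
The plan is to read Algorithm~\ref{mmax-alg2} as an inexact proximal-point method in $x$ applied to a statically smoothed primal function, and to bound its outer-iteration count and its per-subproblem inner cost separately. Define $\Phi_s(x):=\max_y\{\bh(x,y)-\epsilon\|y-y^0\|^2/(4D_q)-q(y)\}$, so that $F:=\Phi_s+p$ is the function being minimized. Since $\bh(x,\cdot)$ is concave and the quadratic term is $(\epsilon/(2D_q))$-strongly concave, the inner maximizer is unique and $\Phi_s$ is differentiable with Lipschitz gradient; since $\bh$ is $L_{\nabla\bh}$-smooth, $\Phi_s$ is $L_{\nabla\bh}$-weakly convex; and by \eqref{ap-D} one has $F(x)\le\max_y\bH(x,y)\le F(x)+\epsilon D_q/4$ for all $x\in\dom p$. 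Subproblem~\eqref{hk} is then exactly $\min_x\{F(x)+L_{\nabla\bh}\|x-x^k\|^2\}$ written in minimax form; weak convexity of $F$ makes this subproblem $L_{\nabla\bh}$-strongly convex, and after the substitutions in step~2 of Algorithm~\ref{mmax-alg2} it is $\sigma_x=L_{\nabla\bh}$-strongly-convex, $\sigma_y=\epsilon/(2D_q)$-strongly-concave, with smooth modulus $3L_{\nabla\bh}+\epsilon/(2D_q)$, so Algorithm~\ref{mmax-alg1} applies.

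For the outer loop and the bound \eqref{upperbnd}, I would establish an approximate descent inequality. Using that $(x^{k+1},y^{k+1})$ solves \eqref{hk} to residual $\epsilon_k$, as certified by the stopping test inside Algorithm~\ref{mmax-alg1}, the standard inexact proximal-point estimate gives $F(x^{k+1})\le F(x^k)-cL_{\nabla\bh}\|x^{k+1}-x^k\|^2+C\epsilon_k^2(L_{\nabla\bh}^{-1}+D_q^2L_{\nabla\bh}\epsilon^{-2})$ for absolute constants $c,C>0$, the last bracket recording that the inexactness in the $y$-block of the smoothed subproblem is amplified by $\sigma_y^{-2}$. Telescoping over $k=0,\dots,K$, using $F\ge\bH^*-\epsilon D_q/4$ and $\sum_{k\ge0}\epsilon_k^2<2\epsilon_0^2$, bounds $\min_{0\le k\le K}\|x^{k+1}-x^k\|^2$ by $O((F(x^0)-F^*+\epsilon_0^2(L_{\nabla\bh}^{-1}+D_q^2L_{\nabla\bh}\epsilon^{-2}))/(KL_{\nabla\bh}))$; with $K$ as in the statement this drops below $(\epsilon/(4L_{\nabla\bh}))^2$, so the termination test fires within $K+1$ outer iterations, say at index $k^\star\le K$. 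Keeping the telescoped inequality one-sided yields $F(\xe)\le F(\hat x^0)+2\epsilon_0^2(L_{\nabla\bh}^{-1}+4D_q^2L_{\nabla\bh}\epsilon^{-2})$, and sandwiching $F$ between $\max_y\bH(\cdot,y)-\epsilon D_q/4$ and $\max_y\bH(\cdot,y)$ turns this into $\max_y\bH(\xe,y)\le\max_y\bH(\hat x^0,y)+\epsilon D_q/4+2\epsilon_0^2(L_{\nabla\bh}^{-1}+4D_q^2L_{\nabla\bh}\epsilon^{-2})$, which is \eqref{upperbnd}.

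Next I would verify that the output $(\xe,\ye)=(x^{k^\star+1},y^{k^\star+1})$ is an $\epsilon$-stationary point of \eqref{ap-prob}. Being $\epsilon_{k^\star}$-optimal for \eqref{hk}, it satisfies, via the stopping criterion of Algorithm~\ref{mmax-alg1}, $\dist(0,\partial_x[\bh_{k^\star}(\cdot,\ye)+p](\xe))\le\epsilon_{k^\star}$ and its $y$-analogue; since $\bh_k(x,y)=\bh(x,y)-\epsilon\|y-y^0\|^2/(4D_q)+L_{\nabla\bh}\|x-x^k\|^2$, the first becomes $\dist(0,\partial_x\bH(\xe,\ye))\le\epsilon_{k^\star}+2L_{\nabla\bh}\|x^{k^\star+1}-x^{k^\star}\|\le\epsilon_0+\epsilon/2\le\epsilon$ by the termination test and $\epsilon_0\le\epsilon/2$, and the second becomes $\dist(0,\partial_y\bH(\xe,\ye))\le\epsilon_{k^\star}+(\epsilon/(2D_q))\|\ye-y^0\|\le\epsilon/2+\epsilon/2=\epsilon$ using $\|\ye-y^0\|\le D_q$ from \eqref{ap-D}; hence Definition~\ref{def2} holds. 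For the total cost, each subproblem is strongly-convex--strongly-concave with condition number $\kappa=(3L_{\nabla\bh}+\epsilon/(2D_q))/\min\{L_{\nabla\bh},\epsilon/(2D_q)\}$, and the complexity guarantee for the modified optimal method Algorithm~\ref{mmax-alg1} (from \cite{lu2023first}, building on \cite{kovalev2022first}) costs $O(\sqrt{\kappa}\log(\mathrm{gap}_k/\epsilon_k^2))$ evaluations of $\nabla\bh$ and of $\prox_p,\prox_q$ to reach residual $\epsilon_k$, where $\mathrm{gap}_k$ is controlled by $\delta+2\alpha^{-1}(\bH^*-\bH_{\rm low}+\epsilon D_q/4+L_{\nabla\bh}D_p^2)$ uniformly in $k$ thanks to compactness of $\dom p,\dom q$, the warm start $\bar z^0=-\sigma_x x^k$, $\bar y^0=y^k$, and the descent established above. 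Summing over $k=0,\dots,K$ with $\epsilon_k=\epsilon_0/(k+1)$, so that $\sum_{k=0}^K\log(\mathrm{gap}_k/\epsilon_k^2)=O((K+1)\log(\mathrm{gap}/\epsilon_0^2)+K\log K)$, reproduces the displayed $N$.

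The step I expect to be the main obstacle is this translation from solver output to stationarity: converting the concrete termination residual of Algorithm~\ref{mmax-alg1} --- a gradient-mapping-type quantity assembled from the proximal residuals $b^{k,t}_x,b^{k,t}_y$ of its inner loop --- into genuine subdifferential-distance bounds for $\bH$ in the sense of Definition~\ref{def2}, while keeping careful track of how the two regularizers $L_{\nabla\bh}\|x-x^k\|^2$ and $\epsilon\|y-y^0\|^2/(4D_q)$ perturb those subdifferentials, and checking that the inexactness schedule $\epsilon_k=\epsilon_0/(k+1)$ with $\epsilon_0\le\epsilon/2$ keeps the accumulated error $\sum_k\epsilon_k^2$ genuinely lower order than the $\Theta(\epsilon^2)$ per-step decrease. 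The explicit constants $\alpha,\delta,K,N$ in the statement are exactly what this quantitative accounting produces; since an analysis of precisely this scheme is carried out in \cite[Theorem~2]{lu2023first}, I would follow that reference.
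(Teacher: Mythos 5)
The paper itself contains no internal proof of this theorem---its ``proof'' is the citation to \cite[Theorem 2]{lu2023first}---and your proposal takes essentially that same route: reading Algorithm~\ref{mmax-alg2} as an inexact proximal-point scheme on the statically smoothed primal function, invoking the $\sqrt{\kappa}\log(\cdot)$ cost of Algorithm~\ref{mmax-alg1} per strongly-convex--strongly-concave subproblem, and explicitly deferring the quantitative bookkeeping to that reference. Your outline is consistent with the stated conclusions (the sandwich $F\le\max_y\bH\le F+\epsilon D_q/4$ giving \eqref{upperbnd}, the translation of the termination residuals into Definition~\ref{def2} with total error at most $\epsilon_0+\epsilon/2\le\epsilon$ in each block, and the summation producing $K$ and $N$), so I have no substantive correction.
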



\begin{thebibliography}{10}

\bibitem{allende2013solving}
G.~B. Allende and G.~Still.
\newblock {Solving bilevel programs with the KKT-approach}.
\newblock {\em Mathematical programming}, 138(1):309--332, 2013.

\bibitem{bard2013practical}
J.~F. Bard.
\newblock {\em Practical bilevel optimization: algorithms and applications},
  volume~30.
\newblock Springer Science \& Business Media, 2013.

\bibitem{bennett2008bilevel}
K.~P. Bennett, G.~Kunapuli, J.~Hu, and J.-S. Pang.
\newblock Bilevel optimization and machine learning.
\newblock In {\em IEEE World Congress on Computational Intelligence}, pages
  25--47. Springer, 2008.

\bibitem{bertinetto2018meta}
L.~Bertinetto, J.~F. Henriques, P.~Torr, and A.~Vedaldi.
\newblock Meta-learning with differentiable closed-form solvers.
\newblock In {\em International Conference on Learning Representations}, 2018.

\bibitem{chen2023bilevel}
L.~Chen, J.~Xu, and J.~Zhang.
\newblock On bilevel optimization without lower-level strong convexity.
\newblock {\em arXiv preprint arXiv:2301.00712}, 2023.

\bibitem{chen2022single}
T.~Chen, Y.~Sun, Q.~Xiao, and W.~Yin.
\newblock A single-timescale method for stochastic bilevel optimization.
\newblock In {\em International Conference on Artificial Intelligence and
  Statistics}, pages 2466--2488, 2022.

\bibitem{clarke1990optimization}
F.~H. Clarke.
\newblock {\em Optimization and nonsmooth analysis}.
\newblock SIAM, 1990.

\bibitem{colson2007overview}
B.~Colson, P.~Marcotte, and G.~Savard.
\newblock An overview of bilevel optimization.
\newblock {\em Annals of operations research}, 153(1):235--256, 2007.

\bibitem{crockett2022bilevel}
C.~Crockett, J.~A. Fessler, et~al.
\newblock Bilevel methods for image reconstruction.
\newblock {\em Foundations and Trends{\textregistered} in Signal Processing},
  15(2-3):121--289, 2022.

\bibitem{dempe2002foundations}
S.~Dempe.
\newblock {\em Foundations of bilevel programming}.
\newblock Springer Science \& Business Media, 2002.

\bibitem{dempe2015bilevel}
S.~Dempe, V.~Kalashnikov, G.~A. P{\'e}rez-Vald{\'e}s, and N.~Kalashnykova.
\newblock Bilevel programming problems.
\newblock {\em Energy Systems. Springer, Berlin}, 10:978--3, 2015.

\bibitem{dempe2020bilevel}
S.~Dempe and A.~Zemkoho.
\newblock Bilevel optimization.
\newblock In {\em Springer optimization and its applications. Vol. 161}.
  Springer, 2020.

\bibitem{dempe2013bilevel}
S.~Dempe and A.~B. Zemkoho.
\newblock The bilevel programming problem: reformulations, constraint
  qualifications and optimality conditions.
\newblock {\em Mathematical Programming}, 138(1):447--473, 2013.

\bibitem{dontchev2009implicit}
A.~L. Dontchev and R.~T. Rockafellar.
\newblock {\em Implicit functions and solution mappings}, volume 543.
\newblock Springer, 2009.

\bibitem{feurer2019hyperparameter}
M.~Feurer and F.~Hutter.
\newblock Hyperparameter optimization.
\newblock In {\em Automated machine learning}, pages 3--33. Springer, Cham,
  2019.

\bibitem{franceschi2017forward}
L.~Franceschi, M.~Donini, P.~Frasconi, and M.~Pontil.
\newblock Forward and reverse gradient-based hyperparameter optimization.
\newblock In {\em International Conference on Machine Learning}, pages
  1165--1173, 2017.

\bibitem{franceschi2018bilevel}
L.~Franceschi, P.~Frasconi, S.~Salzo, R.~Grazzi, and M.~Pontil.
\newblock Bilevel programming for hyperparameter optimization and
  meta-learning.
\newblock In {\em International Conference on Machine Learning}, pages
  1568--1577, 2018.

\bibitem{ghadimi2018approximation}
S.~Ghadimi and M.~Wang.
\newblock Approximation methods for bilevel programming.
\newblock {\em arXiv preprint arXiv:1802.02246}, 2018.

\bibitem{grant2014cvx}
M.~Grant and S.~Boyd.
\newblock {CVX: Matlab software for disciplined convex programming, version
  2.1}, 2014.

\bibitem{grazzi2020iteration}
R.~Grazzi, L.~Franceschi, M.~Pontil, and S.~Salzo.
\newblock On the iteration complexity of hypergradient computation.
\newblock In {\em International Conference on Machine Learning}, pages
  3748--3758, 2020.

\bibitem{guo2021randomized}
Z.~Guo, Q.~Hu, L.~Zhang, and T.~Yang.
\newblock Randomized stochastic variance-reduced methods for multi-task
  stochastic bilevel optimization.
\newblock {\em arXiv preprint arXiv:2105.02266}, 2021.

\bibitem{hansen1992new}
P.~Hansen, B.~Jaumard, and G.~Savard.
\newblock New branch-and-bound rules for linear bilevel programming.
\newblock {\em SIAM Journal on scientific and Statistical Computing},
  13(5):1194--1217, 1992.

\bibitem{hong2023two}
M.~Hong, H.-T. Wai, Z.~Wang, and Z.~Yang.
\newblock A two-timescale stochastic algorithm framework for bilevel
  optimization: Complexity analysis and application to actor-critic.
\newblock {\em SIAM Journal on Optimization}, 33(1):147--180, 2023.

\bibitem{hu2023improved}
X.~Hu, N.~Xiao, X.~Liu, and K.-C. Toh.
\newblock An improved unconstrained approach for bilevel optimization.
\newblock {\em SIAM Journal on Optimization}, 33(4):2801--2829, 2023.

\bibitem{huang2021biadam}
F.~Huang and H.~Huang.
\newblock Biadam: Fast adaptive bilevel optimization methods.
\newblock {\em arXiv preprint arXiv:2106.11396}, 2021.

\bibitem{huang2022efficiently}
M.~Huang, K.~Ji, S.~Ma, and L.~Lai.
\newblock Efficiently escaping saddle points in bilevel optimization.
\newblock {\em arXiv preprint arXiv:2202.03684}, 2022.

\bibitem{ishizuka1992double}
Y.~Ishizuka and E.~Aiyoshi.
\newblock Double penalty method for bilevel optimization problems.
\newblock {\em Annals of Operations Research}, 34(1):73--88, 1992.

\bibitem{ji2020convergence}
K.~Ji, J.~D. Lee, Y.~Liang, and H.~V. Poor.
\newblock Convergence of meta-learning with task-specific adaptation over
  partial parameters.
\newblock {\em Advances in Neural Information Processing Systems},
  33:11490--11500, 2020.

\bibitem{ji2021bilevel}
K.~Ji, J.~Yang, and Y.~Liang.
\newblock Bilevel optimization: Convergence analysis and enhanced design.
\newblock In {\em International conference on machine learning}, pages
  4882--4892, 2021.

\bibitem{khanduri2021near}
P.~Khanduri, S.~Zeng, M.~Hong, H.-T. Wai, Z.~Wang, and Z.~Yang.
\newblock A near-optimal algorithm for stochastic bilevel optimization via
  double-momentum.
\newblock {\em Advances in neural information processing systems},
  34:30271--30283, 2021.

\bibitem{konda1999actor}
V.~Konda and J.~Tsitsiklis.
\newblock Actor-critic algorithms.
\newblock {\em Advances in neural information processing systems}, 12, 1999.

\bibitem{kong2021accelerated}
W.~Kong and R.~D. Monteiro.
\newblock An accelerated inexact proximal point method for solving
  nonconvex-concave min-max problems.
\newblock {\em SIAM Journal on Optimization}, 31(4):2558--2585, 2021.

\bibitem{kovalev2022first}
D.~Kovalev and A.~Gasnikov.
\newblock The first optimal algorithm for smooth and
  strongly-convex-strongly-concave minimax optimization.
\newblock {\em Advances in Neural Information Processing Systems},
  35:14691--14703, 2022.

\bibitem{kwon2023fully}
J.~Kwon, D.~Kwon, S.~Wright, and R.~D. Nowak.
\newblock A fully first-order method for stochastic bilevel optimization.
\newblock In {\em International Conference on Machine Learning}, pages
  18083--18113, 2023.

\bibitem{li2022fully}
J.~Li, B.~Gu, and H.~Huang.
\newblock A fully single loop algorithm for bilevel optimization without
  hessian inverse.
\newblock In {\em Proceedings of the AAAI Conference on Artificial
  Intelligence}, pages 7426--7434, 2022.

\bibitem{li2023novel}
Y.~Li, G.-H. Lin, J.~Zhang, and X.~Zhu.
\newblock A novel approach for bilevel programs based on {Wolfe} duality.
\newblock {\em arXiv preprint arXiv:2302.06838}, 2023.

\bibitem{liu2022bome}
B.~Liu, M.~Ye, S.~Wright, P.~Stone, and Q.~Liu.
\newblock Bome! bilevel optimization made easy: A simple first-order approach.
\newblock {\em Advances in Neural Information Processing Systems},
  35:17248--17262, 2022.

\bibitem{liu2018darts}
H.~Liu, K.~Simonyan, and Y.~Yang.
\newblock Darts: Differentiable architecture search.
\newblock In {\em International Conference on Learning Representations}, 2018.

\bibitem{liu2021investigating}
R.~Liu, J.~Gao, J.~Zhang, D.~Meng, and Z.~Lin.
\newblock Investigating bi-level optimization for learning and vision from a
  unified perspective: A survey and beyond.
\newblock {\em IEEE Transactions on Pattern Analysis and Machine Intelligence},
  2021.

\bibitem{lopez2017gradient}
D.~Lopez-Paz and M.~Ranzato.
\newblock Gradient episodic memory for continual learning.
\newblock {\em Advances in neural information processing systems}, 30, 2017.

\bibitem{lu2023first}
Z.~Lu and S.~Mei.
\newblock A first-order augmented {Lagrangian} method for constrained minimax
  optimization.
\newblock {\em arXiv preprint arXiv:2301.02060}, 2023.

\bibitem{luo1996mathematical}
Z.-Q. Luo, J.-S. Pang, and D.~Ralph.
\newblock {\em Mathematical programs with equilibrium constraints}.
\newblock Cambridge University Press, 1996.

\bibitem{ma2021combined}
X.~Ma, W.~Yao, J.~J. Ye, and J.~Zhang.
\newblock Combined approach with second-order optimality conditions for bilevel
  programming problems.
\newblock 2023.
\newblock To appear in {\it Journal of Convex Analysis}.

\bibitem{maclaurin2015gradient}
D.~Maclaurin, D.~Duvenaud, and R.~Adams.
\newblock Gradient-based hyperparameter optimization through reversible
  learning.
\newblock In {\em International conference on machine learning}, pages
  2113--2122, 2015.

\bibitem{Madry18}
A.~Madry, A.~Makelov, L.~Schmidt, D.~Tsipras, and A.~Vladu.
\newblock Towards deep learning models resistant to adversarial attacks.
\newblock In {\em International Conference on Learning Representations}, 2018.

\bibitem{mirrlees1999theory}
J.~A. Mirrlees.
\newblock {The theory of moral hazard and unobservable behaviour: Part I}.
\newblock {\em The Review of Economic Studies}, 66(1):3--21, 1999.

\bibitem{nesterov2013gradient}
Y.~Nesterov.
\newblock Gradient methods for minimizing composite functions.
\newblock {\em Mathematical programming}, 140(1):125--161, 2013.

\bibitem{Nest13}
Y.~E. Nesterov.
\newblock Gradient methods for minimizing composite functions.
\newblock {\em Mathematical Programming}, 140:125--161, 2013.

\bibitem{nocedal1999numerical}
J.~Nocedal and S.~J. Wright.
\newblock {\em Numerical optimization}.
\newblock Springer, 1999.

\bibitem{outrata2013nonsmooth}
J.~Outrata, M.~Kocvara, and J.~Zowe.
\newblock {\em Nonsmooth approach to optimization problems with equilibrium
  constraints: theory, applications and numerical results}, volume~28.
\newblock Springer Science \& Business Media, 2013.

\bibitem{pedregosa2016hyperparameter}
F.~Pedregosa.
\newblock Hyperparameter optimization with approximate gradient.
\newblock In {\em International conference on machine learning}, pages
  737--746, 2016.

\bibitem{rajeswaran2019meta}
A.~Rajeswaran, C.~Finn, S.~M. Kakade, and S.~Levine.
\newblock Meta-learning with implicit gradients.
\newblock {\em Advances in neural information processing systems}, 32, 2019.

\bibitem{shi2005extended}
C.~Shi, J.~Lu, and G.~Zhang.
\newblock {An extended Kuhn--Tucker approach for linear bilevel programming}.
\newblock {\em Applied Mathematics and Computation}, 162(1):51--63, 2005.

\bibitem{shimizu2012nondifferentiable}
K.~Shimizu, Y.~Ishizuka, and J.~F. Bard.
\newblock {\em Nondifferentiable and two-level mathematical programming}.
\newblock Springer Science \& Business Media, 2012.

\bibitem{sinha2017review}
A.~Sinha, P.~Malo, and K.~Deb.
\newblock A review on bilevel optimization: from classical to evolutionary
  approaches and applications.
\newblock {\em IEEE Transactions on Evolutionary Computation}, 22(2):276--295,
  2017.

\bibitem{sow2022constrained}
D.~Sow, K.~Ji, Z.~Guan, and Y.~Liang.
\newblock A primal-dual approach to bilevel optimization with multiple inner
  minima.
\newblock {\em arXiv preprint arXiv:2203.01123}, 2022.

\bibitem{szegedy2013intriguing}
C.~Szegedy, W.~Zaremba, I.~Sutskever, J.~Bruna, D.~Erhan, I.~Goodfellow, and
  R.~Fergus.
\newblock Intriguing properties of neural networks.
\newblock {\em arXiv preprint arXiv:1312.6199}, 2013.

\bibitem{vicente1994bilevel}
L.~N. Vicente and P.~H. Calamai.
\newblock Bilevel and multilevel programming: A bibliography review.
\newblock {\em Journal of Global optimization}, 5(3):291--306, 1994.

\bibitem{von2010market}
H.~Von~Stackelberg.
\newblock {\em Market structure and equilibrium}.
\newblock Springer Science \& Business Media, 2010.

\bibitem{ward1987nonsmooth}
D.~Ward and J.~M. Borwein.
\newblock Nonsmooth calculus in finite dimensions.
\newblock {\em SIAM Journal on control and optimization}, 25(5):1312--1340,
  1987.

\bibitem{yang2021provably}
J.~Yang, K.~Ji, and Y.~Liang.
\newblock Provably faster algorithms for bilevel optimization.
\newblock {\em Advances in Neural Information Processing Systems},
  34:13670--13682, 2021.

\bibitem{ye2020constraint}
J.~J. Ye.
\newblock Constraint qualifications and optimality conditions in bilevel
  optimization.
\newblock In {\em Bilevel Optimization}, pages 227--251. Springer, 2020.

\bibitem{ye2022difference}
J.~J. Ye, X.~Yuan, S.~Zeng, and J.~Zhang.
\newblock Difference of convex algorithms for bilevel programs with
  applications in hyperparameter selection.
\newblock {\em Mathematical Programming}, pages 1--34, 2022.

\bibitem{zhao2020primal}
R.~Zhao.
\newblock A primal-dual smoothing framework for max-structured non-convex
  optimization.
\newblock {\em Mathematics of Operations Research}, 2023.

\end{thebibliography}
\end{document}